\newtheorem{theorem}{Theorem}[section]
\newtheorem{corollary}[theorem]{Corollary}
\newtheorem{definition}[theorem]{Definition}
\newtheorem{example}{Example}
\newtheorem{lemma}[theorem]{Lemma}
\newtheorem{proposition}[theorem]{Proposition}
\newtheorem{remark}[theorem]{Remark}
\DeclareMathOperator{\Arg}{Arg}
\DeclareMathOperator{\Area}{Area}
\DeclareMathOperator{\Real}{Re}
\DeclareMathOperator{\Imaginary}{Im}
\DeclareMathOperator{\sign}{sign}
\DeclareMathOperator{\trace}{trace}
\DeclareMathOperator{\area}{Area}
\DeclareMathOperator{\Herm}{Herm}
\renewcommand{\Re}{\Real}
\renewcommand{\Im}{\Imaginary}
\title[Osculating M\"{o}bius transformations between circle patterns]{CMC-1 surfaces via osculating M\"{o}bius transformations between circle patterns}
\author{Wai Yeung Lam}
\thanks{Mathematics Subject Classification: 52C26 (primary), 57M50, 53A70 (secondary) \\
Keywords: circle patterns, discrete conformality, Weierstrass representation, discrete differential geometry 	\\	This work was partially supported by the ANR/FNR project SoS, INTER/ANR/16/11554412/SoS,
	ANR-17-CE40-0033, FNR grant CoSH O20/14766753.}
\address{Department of Mathematics, University of Luxembourg, Maison du nombre, 6 avenue de la Fonte, L-4364 Esch-sur-Alzette, Luxembourg}
\email{wai.lam@uni.lu}
\begin{document}
	
	\begin{abstract}
		Given two circle patterns of the same combinatorics in the plane, the M\"{o}bius transformations mapping circumdisks of one to the other induces a $PSL(2,\mathbb{C})$-valued function on the dual graph. Such a function plays the role of an osculating M\"{o}bius transformation and induces a realization of the dual graph in hyperbolic space. We characterize the realizations and obtain a one-to-one correspondence in the cases that the two circle patterns share the same discrete conformal structure. These correspondences are analogous to the Weierstrass representation for surfaces with constant mean curvature $H\equiv 1$ in hyperbolic space. We further establish convergence on triangular lattices.
	\end{abstract}
	
		\maketitle
		
		Discrete differential geometry concerns structure-preserving discretizations in differential geometry. Its goal is to establish a discrete theory with rich mathematical structures such that the smooth theory arises in the limit of refinement. It has stimulated applications in computational architecture and computer graphics. 
		
		An example in discrete conformal geometry is William Thurston's circle packing \cite{Stephenson2005}. In the classical theory, holomorphic functions are conformal, mapping infinitesimal circles to themselves. Instead of infinitesimal size, a circle packing is a configuration of finite-size circles where certain pairs are mutually tangent. Thurston proposed regarding a map induced from two circle packings with the same tangency pattern as a discrete holomorphic function. A discrete analogue of the Riemann mapping follows from Koebe-Andreev-Thurston theorem. Rodin and Sullivan \cite{Rodin1987} showed that it converges to the classical Riemann mapping as the mesh size of the hexagonal circle packing tends to zero. 
		
		In the smooth theory, holomorphic functions are related to many classical surfaces in differential geometry. For example, the Weierstrass representation asserts that every minimal surface in Euclidean space can be represented by a pair of holomorphic functions. Robert Bryant \cite{Bryant1987} obtained an analogous representation for surfaces in hyperbolic space with constant mean curvature $H\equiv 1$ (CMC-1 surfaces for short), where horospheres are trivial CMC-1 surfaces. 

       It gives rise to a question: can one obtain discretization of classical surfaces from discrete holomorphic data, e.g. from a pair of circle packings? It will be beneficial to constructing interesting examples of smooth surfaces by discrete approximation.
       
       A rich theory of discrete surfaces has been developed in terms of quadrilateral meshes via integrable systems. Many classical surfaces in space, like CMC surfaces, possess integrable system structures which provide recipes for construction. These construction often requires a particular parametrization of the surfaces, like isothermic coordinates. As a discretization, quadrilateral meshes are considered with edges playing the role of principal curvature directions. For example, Bobenko and Pinkall \cite{Bobenko1996} considered quadrilateral meshes in the plane such that each face has factorized cross ratios. Each of the meshes can be used to construct a polyhedral surface in $\mathbb{R}^3$ as a discrete minimal surface \cite{Bobenko2006} and in $\mathbb{H}^3$ as a discrete CMC-1 surface \cite{Hertrich-Jeromin2000}. As remarked in \cite{Bobenko2002}, the underlying construction relies on a solution to a discrete Toda-type equation \cite{Adler2001} (see Definition \ref{def:Toda}). However, most meshes in the plane do not admit such a solution and so their construction is not applicable generally. It remains a question to have an alternative recipe for general meshes in the plane.       
		
		In this article, we consider circle patterns. Given a planar graph, a \textbf{circle pattern} is a placement of the vertices in the plane such that each face has a circumcircle passing through the vertices. In this way, there is a corresponding circle for every face, a pair of intersecting circles for every edge and an intersecting point of circles for every vertex of the graph. It generalizes circle packings in the sense that every circle packing with triangular interstice together with its dual circle packing forms a circle pattern. In this case, tangent points of the primal circle packing are the vertices of the planar graph under our consideration. Every edge corresponds to a pair of a primal circle and a dual circle. Every face corresponds to either a primal circle or a dual circle. In particular, faces correspond to dual circles are triangular.
		
		 By triangulating the faces of a planar graph, circle patterns are parameterized by \textbf{complex cross ratios}. We denote by $M=(V, E, F)$ a triangulation of a disk with or without boundary where $V$, $E$ and $F$ are the sets of vertices, edges and faces respectively. Vertices are denoted by $i,j,k$. An unoriented edge is denoted by $\{ij\}=\{ji\}$ indicating that its end points are vertices $i$ and $ j$. Given a realization $z:V \to \mathbb{C}\cup\{\infty\}$ on the Riemann sphere (i.e. a placement of the vertices), we associate a complex cross ratio to every common edge $\{ij\}$ shared by triangles $\{ijk\}$ and $\{jil\}$:
		\[
		X_{ij} :=  -\frac{(z_k - z_i)(z_l -z_j)}{(z_i - z_l)(z_j - z_k)} = X_{ji}
		\]
		which encodes how the circumdisk of triangle $z_iz_jz_k$ is glued to that of $z_jz_iz_l$ (See Figure \ref{fig:orientation}). It defines a function  $X: E_{int}  \to \mathbb{C}$ on interior edges, which can be characterized as solutions to a system of polynomial equations (See \cite{Lam2019}). One can show that two circle patterns differ by a M\"{o}bius transformation if and only if their cross ratios are identical. Furthermore, a circle pattern is \textbf{Delaunay} if neighboring circles intersect with angles $\Arg X \in [0,\pi)$. 
		
		\begin{figure}
			\centering
			\begin{tikzpicture}[line cap=round,line join=round,>=triangle 45,x=1.0cm,y=1.0cm,scale=1.5]
				\clip(-1.5440494751930037,0.247159828344958221) rectangle (3.00498188597834266,3.7102277223684125);
				\draw [shift={(0.78,3.26)},line width=.5pt,fill=black,fill opacity=0.10000000149011612] (0,0) -- (-24.813257032322223:0.2512792886121933) arc (-24.813257032322223:22.61478568507276:0.2512792886121933) -- cycle;
				\draw [line width=.5pt] (0.78,3.26)-- (-1.14,1.18);
				\draw [line width=.5pt] (-1.14,1.18)-- (0.78,0.56);
				\draw [line width=.5pt] (0.78,0.56)-- (0.78,3.26);
				\draw [line width=.5pt] (0.78,3.26)-- (2.56,1.1);
				\draw [line width=.5pt] (2.56,1.1)-- (0.78,0.56);
				\draw [line width=.5pt,dash pattern=on 2pt off 2pt] (0.1558333333333333,1.91) circle (1.4873076439586324cm);
				\draw [line width=.5pt,dash pattern=on 2pt off 2pt] (1.3423595505617978,1.91) circle (1.462445986731841cm);
				\draw [shift={(1.3667829962387972,1.5629195801280533)},line width=.5pt]  plot[domain=-4.026659469478403:1.1232763516377273,variable=\t]({1.*0.27341791713268293*cos(\t r)+0.*0.27341791713268293*sin(\t r)},{0.*0.27341791713268293*cos(\t r)+1.*0.27341791713268293*sin(\t r)});
				\draw [shift={(0.06824381377497166,1.6013948151640185)},line width=.5pt]  plot[domain=-3.901355408465564:1.056345013735869,variable=\t]({1.*0.26534665582572997*cos(\t r)+0.*0.26534665582572997*sin(\t r)},{0.*0.26534665582572997*cos(\t r)+1.*0.26534665582572997*sin(\t r)});
				\draw [->,line width=.5pt] (1.4822087013466927,1.803389799102836) -- (1.3379265699618232,1.8611026516567828);
				\draw [->,line width=.5pt] (0.19328832764185855,1.8226274166208185) -- (0.029768578739006457,1.899577886692748);
				\draw (0.676232923642945,0.499158511750158) node[anchor=north west] {$i$};
				\draw (0.676232923642945,3.6193820462352503) node[anchor=north west] {$j$};
				\draw (-1.3979869928203247,1.2073567332806394) node[anchor=north west] {$k$};
				\draw (2.481113458984279,1.1319729466969817) node[anchor=north west] {$l$};
				\draw (1.0091662489955112,3.3518625204937885) node[anchor=north west] {$\Arg X_{ij}$};
				\begin{scriptsize}
					\draw [fill=black] (0.78,0.56) circle (1.pt);
					\draw [fill=black] (0.78,3.26) circle (1.pt);
					\draw [fill=black] (-1.14,1.18) circle (1.pt);
					\draw [fill=black] (2.56,1.1) circle (1.pt);
				\end{scriptsize}
			\end{tikzpicture}
			\caption{Two neighboring triangles sharing the edge $\{ij\}$ together with circumscribed circles.}
			\label{fig:orientation}
		\end{figure}

        Every pair of circle patterns induce an \textbf{osculating M\"{o}bius transformation}. Given two realizations $z$ and $\tilde{z}$ of a triangle mesh in the plane, there is a unique M\"{o}bius transformation $A_{ijk}$ for every face $\{ijk\}$ mapping vertices $z_i,z_j,z_k$ to $\tilde{z}_i,\tilde{z}_j,\tilde{z}_k$. It induces a map $A: F \to SL(2,\mathbb{C})/\{\pm I\}$ and we call it the osculating M\"{o}bius transformations from $z$ to  $\tilde{z}$ (See Section \ref{sec:discreteosc}). The map $A$ can be interpreted as being defined on the vertex set $V^*$ of the dual graph. Regarding $\mathbb{H}^3$ as a homogeneous space with projection $\mathfrak{i} :SL(2,\mathbb{C})\to SL(2,\mathbb{C})/SU(2) \cong \mathbb{H}^3$, we then obtain a realization of the dual graph $f:= \mathfrak{i} \circ A$ into hyperbolic space. Our goal is to investigate how such realizations are related to surfaces with constant mean curvature $H\equiv1$.
        
        The discrete CMC-1 surfaces under consideration are certain horospherical nets in hyperbolic space. A \textbf{horospherical net} is a realization of a graph in $\mathbb{H}^3$ such that the vertices of each face lies on the same horosphere. It is a piecewisely horospherical surface and differentiable everywhere except along edges. Thus the classical pointwise mean curvature is not defined over edges. However, by considering the parallel surfaces of a horospherical net, one can consider the integrated mean curvature over each face as the infinitesimal change of the face area analogous to the Steiner formula \cite{Bobenko2010a}. Such a quantity can be expressed in terms of the edge lengths $\ell$ and dihedral angles $\alpha$ between neighboring horospherical faces. 
        \begin{definition}\label{def:intmean}
        	Given a horospherical net $f:V^* \to \mathbb{H}^3$, we define its integrated mean curvature $H:F^* \to \mathbb{R}$ over faces
        	\[
        	H_\phi := \area(f(\phi)) + \frac{1}{2}  \sum_{ij \in \phi} \ell_{ij} \tan \frac{\alpha_{ij}}{2} \quad \forall \phi \in F^*
        	\]
        	When $f$ is trivalent with non-vanishing edge lengths, the integrated mean curvature is equivalent to 
        	\[
        	H_{\phi} =- \frac{1}{2} \frac{d}{dt} \area(f_t(\phi)) |_{t=0} 
        	\]
        	where each $f_t$ is the horospherical net parallel to $f$ at distance $t$. 
        \end{definition}
      If the ratio of the integrated mean curvature to the face area is constantly equal to one, we call the horospherical net a discrete CMC-1 surface (see Definition \ref{def:discretecmc}). Our main result is a Weierstrass-type representation of a discrete CMC-1 surface in terms of a pair of circle patterns (See Figure \ref{fig:Weierstrass}. For any matrix $A\in SL(2,\mathbb{C})$, we write $A^*$ the hermitian conjugate.
      \begin{figure}[h]
      	\centering
      	\begin{minipage}{.47\textwidth}
      		\includegraphics[width=0.99\textwidth]{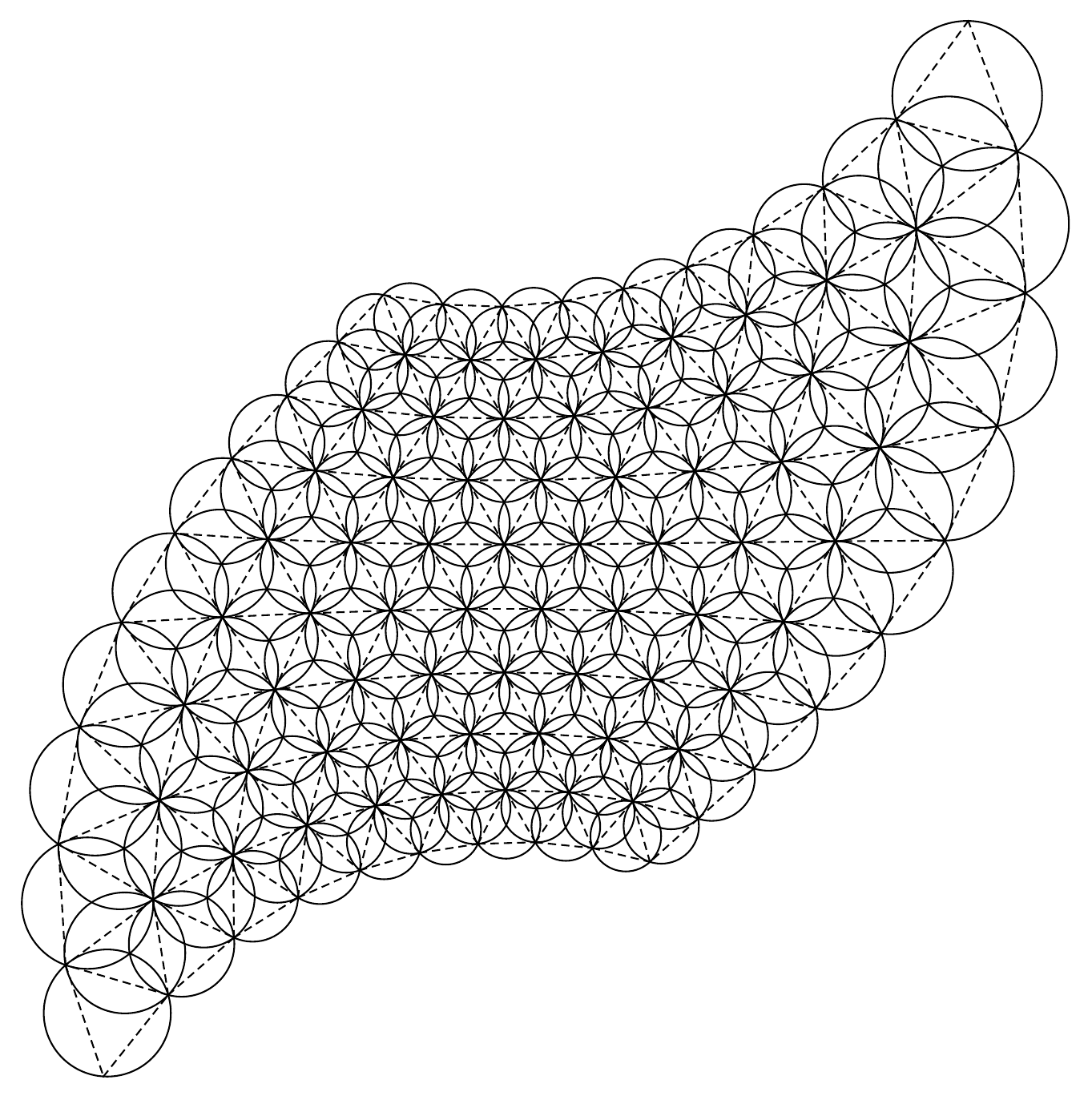}
      	\end{minipage}
      	\begin{minipage}{.47\textwidth}
      		\includegraphics[width=0.99\textwidth]{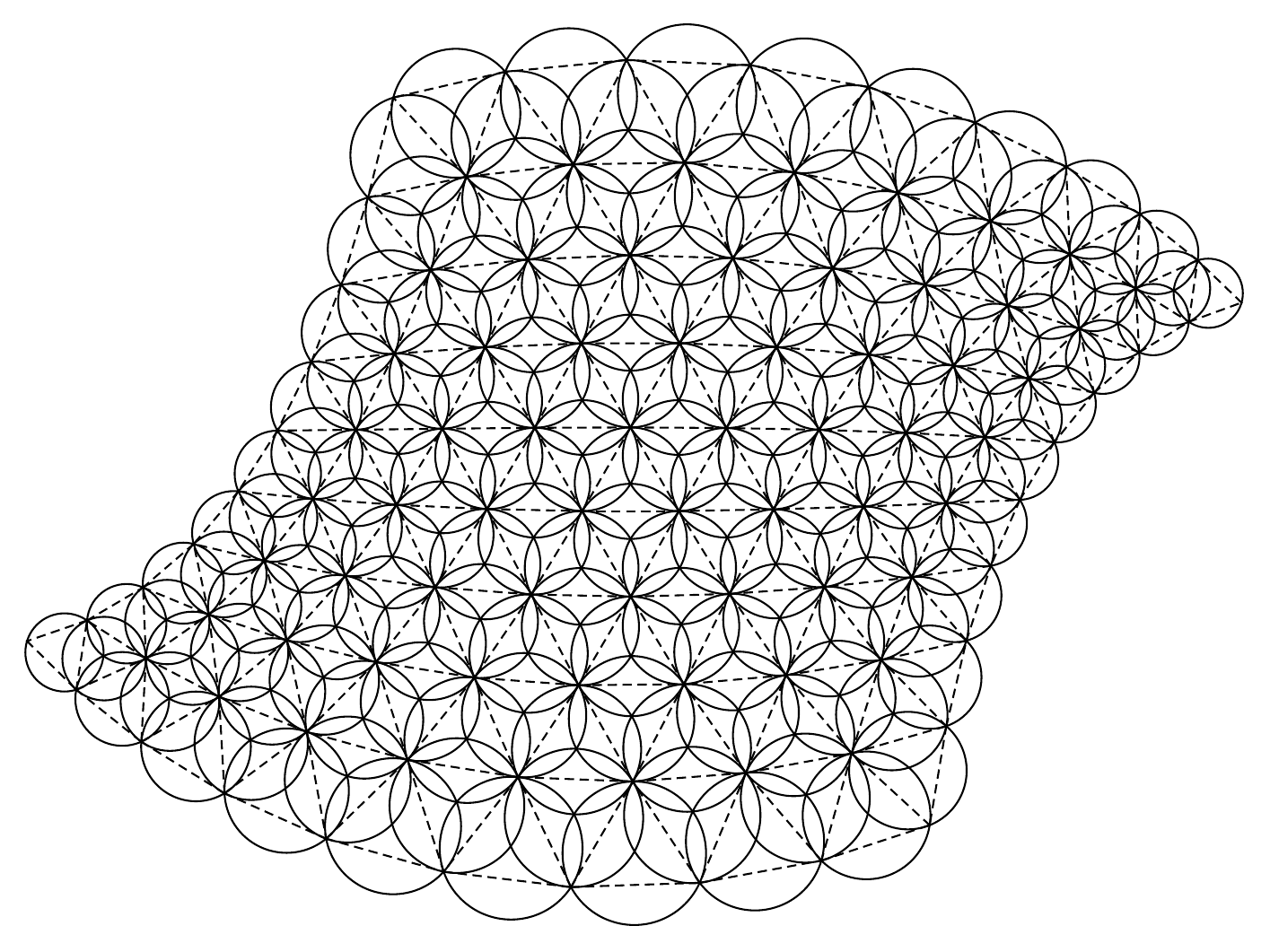}
      	\end{minipage}
      	\includegraphics[width=0.6\textwidth]{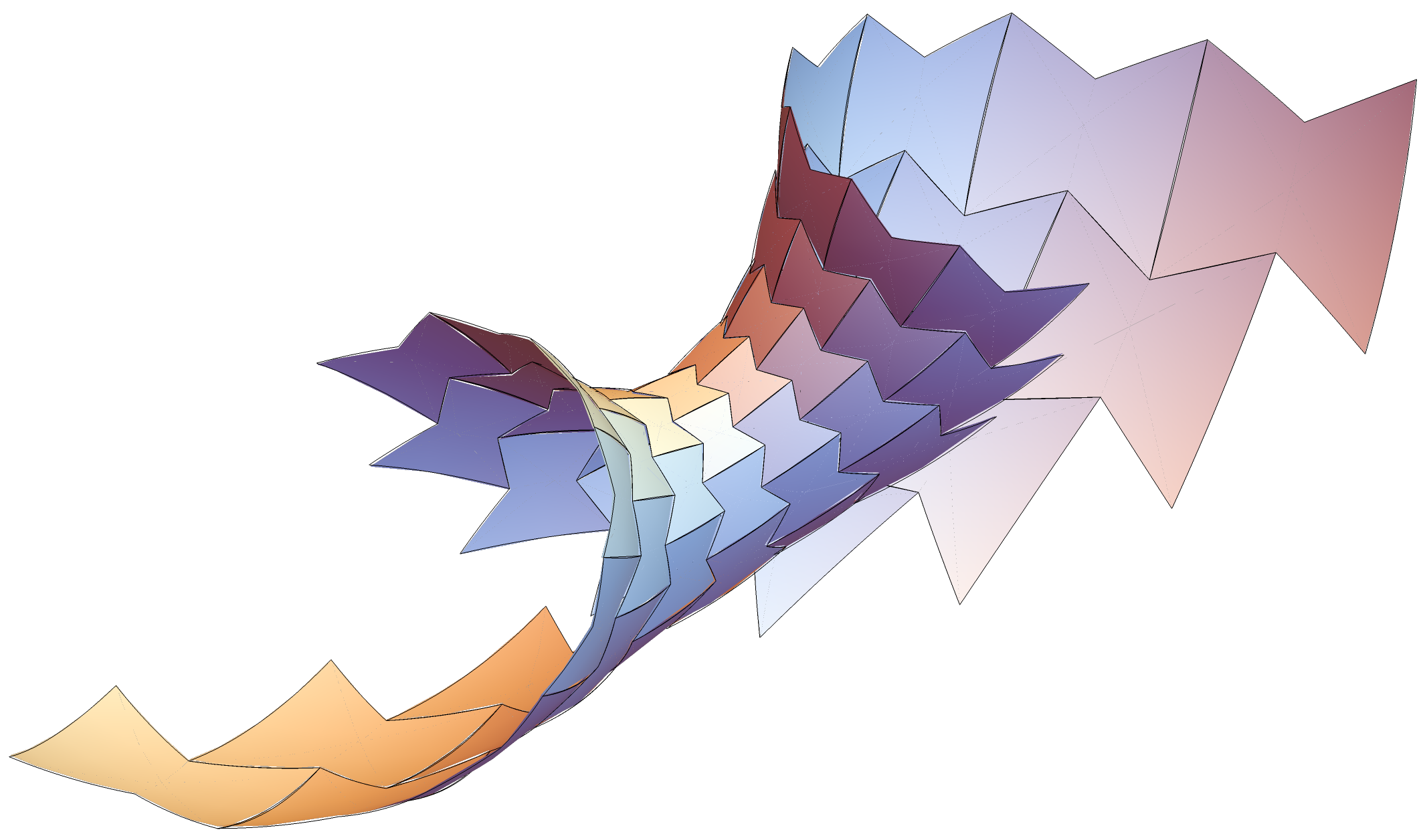}
      	\caption{The top row shows two circle patterns sharing the same modulus of cross ratios, where the triangle meshes are indicated by dotted lines. The osculating M\"{o}bius transformation induces a realization of the dual graph into hyperbolic space (bottom). It forms a surface consisting of pieces of horospheres. Over each face, the ratio of the integrated mean curvature (Definition \ref{def:intmean}) to the face area is constantly equal to $1$. The hyperbolic Gauss map is the vertices of the circle pattern on the top right. Such a correspondence is a discrete analogue of the Weierstrass representation for CMC-1 surfaces in hyperbolic space.}
      	\label{fig:Weierstrass}
      \end{figure}
        
      \begin{theorem}\label{thm:horo}
      	We denote a triangulation by $(V,E,F)$ and its dual mesh by $(V^*,E^*,F^*)$. Given two Delaunay circle patterns $z,\tilde{z}:V\to \mathbb{C}$ with cross ratios $X,\tilde{X}$ such that $|X| \equiv |\tilde{X}|$. Let $A:V^{*} \to SL(2,\mathbb{C})$ be the osculating M\"{o}bius transformation from $z$ to $\tilde{z}$. Then the realization $f:V^{*} \to \mathbb{H}^3$ of the dual graph 
      	defined by
      	\[
      	f:= A A^{*}
      	\]
      	is a horospherical net with integrated mean curvature over faces satisfying
      	\[
      	\frac{H_{\phi}}{\area(f(\phi))} \equiv 1  \quad  \forall \phi \in F^*.
      	\]
      	In particular $f$ is a discrete CMC-1 surface with hyperbolic Gauss map $\tilde{z}$.
      	
      	Conversely, suppose $f:V^{*} \to \mathbb{H}^3$ is discrete CMC-1 surface with hyperbolic Gauss map $\tilde{z}:V \to \mathbb{C}$. Then there exists a Delaunay circle pattern $z$ such that 
      	\[
      	f=A A^{*}
      	\] where $A$ is the osculating M\"{o}bius transformation from $z$ to $\tilde{z}$. The cross ratios $X,\tilde{X}$ of the circle patterns satisfy $|X| = |\tilde{X}|$.	
      \end{theorem}
  
 Two circle patterns satisfying $|X| = |\tilde{X}|$ are also said to differ by vertex scaling (see \cite{Luo2004}).
      
        Our construction generalizes the integrable system approach in \cite{Hertrich-Jeromin2000}. There, the construction relies on solutions to the discrete Toda-type equation (See Definition \ref{def:Toda}). We shall illustrate how the solutions related to our construction. Indeed for every such solution, we show that it induces a 1-parameter family of circle patterns, which contains many pairs of circle patterns with cross ratios satisfying $|X|\equiv|\tilde{X}|$ (Theorem \ref{thm:inttoda}). The resulting discrete CMC-1 surfaces include the ones considered by Hertrich-Jeromin \cite{Hertrich-Jeromin2000} via the integrable system approach. In particular, our construction is applicable even for meshes in the plane that does not support a solution to the discrete Toda-type equation.
        
        We then establish convergence of discrete CMC-1 surfaces. By restricting the combinatorics to triangle lattices, we show that every smooth CMC-1 surfaces without umbilic points can be approximated by our discrete CMC-1 surfaces (Theorem \ref{thm:converge}). It relies on the results by He-Schramm \cite{Schramm1998} and B\"{u}cking \cite{Bucking2016}.

       Instead of sharing the same modulus of cross ratios $|X|=|\tilde{X}|$, we also investigate an alternative theory in terms of a pair of circle patterns sharing the same intersection angles $\Arg X = \Arg \tilde{X}$ (Theorem \ref{thm:equid}). We analogously obtain a Weierstrass-type representation, though the resulting realizations in hyperbolic space are no longer horospherical nets.
       
         In contrast to the smooth theory, the Weierstrass data of discrete CMC-1 surfaces in hyperbolic space is different from that of discrete minimal surfaces in $\mathbb{R}^3$. In \cite{Lam2017,Lam2018}, it is shown that every discrete minimal surface corresponds to an infinitesimal deformation of a circle pattern. For a discrete CMC-1 surface, its Weierstrass data consists of a pair of circle patterns.        
         
            Throughout the article, we focus on the local theory of discrete CMC-1 surfaces, where surfaces are assumed to be simply connected. For surfaces with non-trivial topology, the construction of discrete CMC-1 surfaces involves the period problem and it requires to study the deformation space of circle patterns. Such a problem is related to Kojima-Tan-Misuhima's conjecture \cite{Kojima2003, Lam2019}.
            
        
       Osculating M\"{o}bius transformations between circle patterns play an important role in the construction of discrete CMC-1 surfaces. In the smooth theory, osculating M\"{o}bius transformations is essential to study complex projective structures \cite{Anderson1998} and the renormalized volume of hyperbolic 3-manifolds \cite{Brock2019}. It involves the Epstein map relating surfaces in $\mathbb{H}^3$ and the conformal metrics at infinity $\partial \mathbb{H}^3 \cong S^2$. We show that our approach to discrete CMC-1 surfaces is a special case of such a map (See Section \ref{sec:horosphersver}).
%
        
        The outline of the paper is as follows. In section \ref{sec:background}, we review the connection between classical osculating M\"{o}bius transformations and smooth CMC-1 surfaces. In section \ref{sec:discreteosc}, we develop properties of osculating M\"{o}bius transformations between circle patterns. In section \ref{sec:discretecmc}, horospherical nets and integrated mean curvature are introduced. We then prove the main result about the correspondence between circle patterns and discrete CMC-1 surfaces. In section \ref{sec:toda}, we explain how our discrete CMC-1 surfaces generalize the previous work in the integrable system approach. In section \ref{sec:convergence}, we prove convergence of discrete CMC-1 surfaces. In section \ref{sec:circarg}, an alternative theory for circle patterns sharing the same intersection angles is established. In section \ref{sec:minimal}, we explain the connection between minimal surfaces in $\mathbb{R}^3$ and osculating M\"{o}bius vector fields.

\section{Background} \label{sec:background}

We review some notations and results of smooth osculating M\"{o}bius transformations related to CMC-1 surfaces.

\subsection{M\"{o}bius transformations} \label{sec:mobius}

Without further notice, we consider orientation-preserving M\"{o}bius transformations only. They are in the form $z \mapsto (a z + b)/(c z +d)$ for some $a,b,c,d \in \mathbb{C}$ such that $ad - bc =1$ and are also called complex projective transformations. Every M\"{o}bius transformation represents an element in $SL(2,\mathbb{C})/\{\pm I\}$. These transformations are generated by Euclidean motions and inversion $z \mapsto 1/z$. They are holomorphic, map circles to circles and preserve cross ratios. 	

We also make use of the fact there exists a unique M\"{o}bius transformation $A \in SL(2, \mathbb{C})/\{\pm \emph{I}\}$ that maps any three distinct points $z_i$ in the plane to any other three distinct points $\tilde{z}_i$ determined via
\[
A \left(\begin{array}{c}
z_i \\ 1
\end{array}\right) = \lambda_{i} \left(\begin{array}{c}
\tilde{z}_i \\ 1
\end{array}\right) \quad \text{for some } \lambda_i \in \mathbb{C}-\{0\}.
\]

\subsection{Cross ratios}

As mentioned in the introduction, every triangle mesh $z:V \to \mathbb{C}$ is associated with cross ratios $X: E_{int} \to \mathbb{C}\cup \{\infty\}$ defined over interior edges. Recall that for ever common edge $\{ij\}$ shared by triangles $\{ijk\}$ and $\{jil\}$
	\[
X_{ij} :=  -\frac{(z_k - z_i)(z_l -z_j)}{(z_i - z_l)(z_j - z_k)} = X_{ji}
\]
See Figure \ref{fig:orientation}.

\begin{proposition}[\cite{Lam2019}]
	Suppose $M=(V,E,F)$ is a triangulation of a surface with or without boundary. Then a function $X: E_{int} \to \mathbb{C}$ is the cross ratio of some triangle mesh $z:V \to \mathbb{C}\cup \{\infty\}$  if and only if for every interior vertex $i$ with adjacent vertices numbered as $1$, $2$, ..., $n$ in the clockwise order counted from the link of $i$,
	\begin{gather}
	\Pi_{j=1}^n X_{ij} =1  \label{eq:crproduct}\\
	X_{i1} + X_{i1} X_{i2} + X_{i1}X_{i2}X_{i3} + \dots +  X_{i1}X_{i2}\dots X_{in} =0 \label{eq:crsum}
	\end{gather}
	In particular, the realization $z$ is unique up to a M\"{o}bius transformation.
\end{proposition}
	Particularly, the triangle mesh has no branching at vertices if for every vertex $i$
\[
\sum_j \Arg X_{ij} = 2 \pi.
\]
Throughout the article, we only consider triangle meshes without branching vertices for simplicity. Indeed, the cross ratio provides a recipe to glue neighboring circumdisks associated to faces. Equations \eqref{eq:crproduct} and \eqref{eq:crsum} ensures that the holonomy around each vertex under the gluing construction is trivial.
%

%
%

\subsection{Hyperbolic space}\label{sec:hyperbolic}
We start with the hyperboloid model. We denote by $\mathbb{R}^{3,1}$ the Minkowski space, which is a four dimensional real vector space equipped with the Minkowski inner product
\[
\langle (x_0,x_1,x_2,x_3),(y_0,y_1,y_2,y_3) \rangle := - x_0 y_0 + x_1 y_1  + x_2 y_2 + x_3 y_3
\] 
and the hyperbolic 3-space is the subset
\[
\mathbb{H}^3 = \{(x_0 ,x_1,x_2,x_3) \in \mathbb{R}^{3,1}|  - x_0^2 + x_1^2  + x_2^2 + x_3^2 =-1, x_0 >0 \}
\]
together with the induced metric. The Minkowski space is isomorphic to the real vector space of 2 by 2 Hermitian matrices denoted as $\Herm(2)$ via
\[
(x_0,x_1,x_2,x_3) \leftrightarrow U=\left( \begin{array}{cc}
x_0 +x_3 & x_1 + \mathbf{i} x_2 \\ x_1 - \mathbf{i} x_2 & x_0 -x_3
\end{array}\right)
\]
equipped with the bilinear form
\[
\langle U,V \rangle = -\frac{1}{2} \trace(U \tilde{V})
\]
where $\mathbf{i}=\sqrt{-1}$ and $\tilde{V}$ is the cofactor matrix of $V$ defined by $\tilde{V} V = \det(V) \emph{I}$. In particular, $||U||^2= \langle U, U \rangle = -\det(U)$.

The special linear group $SL(2,\mathbb{C})$ acts on $\Herm(2) \cong \mathbb{R}^{3,1}$. For any $A \in SL(2,\mathbb{C})$, we denote  by $A^*$ the Hermitian conjugate. The mapping
\[
V \in \Herm(2) \mapsto A V A^*
\]
preserves the Minkowski inner product. Particularly, it acts on $\mathbb{H}^3$ isometrically and transitively. 

 The hyperbolic space is identified as the space of hermitian matrices with determinant $1$ and positive trace, which thus can be obtained from $SL(2,C)$ via a mapping
\begin{align} \label{eq:SLhyp}
A \mapsto A A^*  \in \Herm(2).
\end{align}
Two elements $A$ and $\tilde{A}$ induce the same hermitian matrix if and only if $\tilde{A} = A B$ for some $B \in SU(2)$. It yields an identification of the hyperbolic space as the left cosets of $SU(2)$ in $SL(2,\mathbb{C})$
\[
\mathfrak{i}: SL(2,\mathbb{C}) \to SL(2,\mathbb{C})  / SU(2) \cong \mathbb{H}^3 .
\]

We denote the upper light cone by
\[
L^{+}:= \{ (x_0 ,x_1,x_2,x_3) \in \mathbb{R}^{3,1}|  - x_0^2 + x_1^2  + x_2^2 + x_3^2 =0, x_0 >0 \}
\]
which corresponds to the set of hermitian matrices $U$ satisfying $\det U =0$ and $\trace U >0$. Every element in the upper light cone defines a horosphere $H_U$ in hyperbolic space
\[
U \in L^{+} \leftrightarrow H_U:=\{ x \in \mathbb{H}^3 | \langle x,U \rangle =1\}.
\]

\subsection{Totally umbilical hypersurfaces} \label{sec:umbilic} A hypersurface in hyperbolic space is totally umbilic if at every point the normal curvatures in all directions are the same. It turns out that there are four types of complete totally umbilical hypersurfaces which are characterized by their constant mean curvature $H$. 
\begin{itemize}
	\item Geodesic sphere ($H>1$)
	\item Horosphere ($H=1$)
	\item Equidistant ($1>H>0$)
	\item Totally geodesic hyperplane ($H=0$)
\end{itemize}

These hypersurfaces are easily visualized in the Poincaré ball model: A geodesic sphere is a Euclidean sphere that is disjoint from $\partial \mathbb{H}^3$. A horosphere is a sphere that touches $\partial \mathbb{H}^3$. A totally geodesic hyperplane is a spherical cap that intersects $\partial \mathbb{H}^3$ orthogonally. An equidistant is a sphere that intersect $\partial \mathbb{H}^3$ but not orthogonally. Every equidistant has a constant distance from the totally geodesic hyperplane that share the sames intersection on $\partial \mathbb{H}^3$. Throughout the following sections, horospheres and equidistants are frequently used.

\subsection{Smooth osculating M\"{o}bius transformations}\label{sec:smoothM}

It is a classical result \cite{Thurston1986,Small1994,Anderson1998} that a locally univalent holomorphic function is associated with an osculating M\"{o}bius transformation. We review its definition and property.

%

We assume $\Omega$ is a simply connected domain in $\mathbb{C}$. A holomorphic function $h: \Omega \to \mathbb{C}$ is \emph{locally univalent} if $h'$ non-vanishing. It is associated with the \textit{osculating M\"{o}bius transformation} $A_h:\Omega \to SL(2,\mathbb{C})$, which is a continuous mapping satisfying for each $z \in \Omega$,
\begin{align*}
A_{h}(z) = \left(\begin{array}{cc}
\alpha(z) & \beta(z) \\ \gamma(z) & \delta(z)
\end{array} \right)
\end{align*}
induces the unique M\"{o}bius transformation that coincides with the 2-jet of $h$ at $z$, i.e.
\begin{align*}
h(z) =& \frac{\alpha z + \beta}{\gamma z + \delta} \\
h'(z) =& \frac{\partial}{\partial w}(\frac{\alpha w + \beta}{\gamma w + \delta})|_{w=z} \\
h''(z) =& \frac{\partial^2}{\partial w^2}(\frac{\alpha w + \beta}{\gamma w + \delta})|_{w=z} 
\end{align*}
Together with $\alpha \delta - \beta \gamma =1$, it yields
\begin{align} \label{eq:smoothmob}
A_{h}= \frac{1}{ h'(z)^{3/2}} \left(
\begin{array}{cc}
h'(z)^2-\frac{h(z) h''(z)}{2} & \frac{z h(z) h''(z)}{2 }+ h(z)
h'(z) - z h'(z)^2\\
-\frac{h''(z)}{2 } & \frac{z h''(z)}{2 } + h'(z) \\
\end{array}
\right).
\end{align}
Notice that there is a square root in the denominator. Pointwisely, one can pick one of the two branches arbitrarily. In order to define a continuous map to $SL(2,\mathbb{C})$, it is natural to pick the branches consistently. Generally, if  $\Omega$ is a multiply connected region, the map $A_h$ is only defined on a double cover of $\Omega$ due to the two branches of the square root. As we shall see in the next section, a similar phenomenon occurs in the discrete case. 

We denote the Schwarzian derivative of $h$ by
\[
S_h(z):= \frac{h'''(z)}{h'(z)} - \frac{3}{2} \left( \frac{h''(z)}{h'(z)} \right)^2.
\]
By direct computation, the Maurer-Cartan form of $A_h$ is the differential form
\begin{equation}\label{eq:mauerh}
	A_h^{-1} \, dA_h = -\frac{S_h(z)}{2}\left( \begin{array}{cc}
	z & -z^2 \\ 1 & -z
	\end{array}\right) dz.
\end{equation}
It is a $sl(2,\mathbb{C})$-valued 1-form on $\Omega$ whose determinant as a quadratic form vanishes. It is equivalent to say that the pull-back of the Killing form on $sl(2,\mathbb{C})$ via $A_h$ vanishes. The 1-form measures how $A_h$ differs from being constant and hence how $h$ differs from a M\"{o}bius transformation.

Given locally univalent functions $g:\Omega \to \mathbb{C}$ and $h:g(\Omega) \to \mathbb{C}$, we have the composition $h\circ g:\Omega \to \mathbb{C}$. Notice that $A_h$ is defined on $g(\Omega)$. It is known that osculating M\"{o}bius transformations satisfy a composition rule \cite{Anderson1998}
\begin{equation}\label{eq:composition}
	A_{h\circ g} = (A_h \circ g) A_g
\end{equation}
where the right side involves matrix multiplication.

For the purpose of this article, we are interested in the osculating M\"{o}bius transformation between a pair of locally univalent functions $g, \tilde{g}: \Omega \to \mathbb{C}$. For every $w \in \Omega$, there is a  neighborhood $U$ such that $g|_U, \tilde{g}|_U$ are injective. Denoting the composition $h:=(\tilde{g}|_U) \circ (g|_U)^{-1}$, we have the osculating M\"{o}bius transformation $A_h$ defined on $g(U)$. In this way, we define the osculating M\"{o}bius transformation $A:\Omega \to SL(2,C)$ from $g$ to $\tilde{g}$ as the composition
\[
A|_U := A_h \circ g|_U = (A_{\tilde{g}} \, A^{-1}_{g})|_U
\]
where Formula \eqref{eq:composition} is used.

\begin{proposition}\label{prop:smoothoscu}
	Suppose $g,\tilde{g}:\Omega \to \mathbb{C}$ are locally univalent functions on a simply connected region. The osculating M\"{o}bius transformation $A:\Omega \to SL(2,\mathbb{C})$ from $g$ to $\tilde{g}$ is given by
	\[
	A=A_{\tilde{g}} \, A^{-1}_{g}
	\]
Its Mauer-Cartan form is
\[
A^{-1} dA = \frac{S_g(w)-S_{\tilde{g}}(w)}{2 g'(w)}\left( \begin{array}{cc}
g(w) & -g(w)^2 \\ 1 & -g(w)
\end{array}\right) dw
\]
which is the pull-back of $A_h^{-1} dA_h$ via $g$ by setting $z=g(w)$ in Equation \eqref{eq:mauerh}.
\end{proposition}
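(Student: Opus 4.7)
The strategy is a two–step unpacking. First I would verify the formula $A=A_{\tilde g}\,A_g^{-1}$ by combining the definition given just above the proposition with the composition rule \eqref{eq:composition}. Locally on $U$ we have $h=\tilde g|_U\circ(g|_U)^{-1}$, so $\tilde g|_U=h\circ g|_U$ and \eqref{eq:composition} gives $A_{\tilde g}|_U=(A_h\circ g|_U)\,A_g|_U$. Solving for $A_h\circ g|_U$ yields $A_h\circ g|_U=A_{\tilde g}\,A_g^{-1}|_U$, which is exactly $A|_U$. Since this holds on a neighbourhood of every point and $\Omega$ is simply connected, the identity $A=A_{\tilde g}A_g^{-1}$ extends globally (the usual square-root ambiguity in \eqref{eq:smoothmob} is consistent between $A_g$ and $A_{\tilde g}$ on $\Omega$, and the overall sign cancels in the product $A_{\tilde g}A_g^{-1}$, so $A$ lands in $SL(2,\mathbb{C})$ rather than a double cover).

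Next I would compute the Maurer–Cartan form. Since $A=A_h\circ g$ on each $U$, naturality of the Maurer–Cartan form under composition gives
\[
A^{-1}dA \;=\; g^{*}\bigl(A_h^{-1}dA_h\bigr).
\]
Substituting $z=g(w)$, $dz=g'(w)\,dw$ into the expression \eqref{eq:mauerh} produces
\[
A^{-1}dA \;=\; -\frac{S_h(g(w))\,g'(w)}{2}\left(\begin{array}{cc} g(w) & -g(w)^2 \\ 1 & -g(w) \end{array}\right)dw.
\]

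Finally, I would use the classical chain rule for the Schwarzian derivative applied to $\tilde g=h\circ g$, namely
\[
S_{\tilde g}(w) \;=\; S_h(g(w))\,g'(w)^{2} + S_g(w),
\]
which rearranges to $-S_h(g(w))\,g'(w)=\dfrac{S_g(w)-S_{\tilde g}(w)}{g'(w)}$. Substituting this into the previous display yields exactly the claimed formula for $A^{-1}dA$.

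The only delicate point is the first paragraph: one must make sure the square-root branches in \eqref{eq:smoothmob} defining $A_g$ and $A_{\tilde g}$ are chosen consistently on the simply connected domain $\Omega$ and then check that a sign flip in the chosen branch of $A_g$ forces the same sign flip in $A_{\tilde g}$ (both are determined by the same underlying $g$-dependent monodromy on $\Omega$), so that $A_{\tilde g}A_g^{-1}$ is single-valued. The rest is a mechanical application of the pull-back identity and the Schwarzian chain rule, so I expect no real obstacles there.
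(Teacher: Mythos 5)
Your argument is correct and follows essentially the same route as the paper, which states the proposition without a separate proof: the identity $A=A_{\tilde g}\,A_g^{-1}$ comes straight from the composition rule \eqref{eq:composition} applied to $\tilde g=h\circ g$, and the Maurer--Cartan form is the pull-back of \eqref{eq:mauerh} via $z=g(w)$ combined with the (implicit in the paper) Schwarzian chain rule $S_{\tilde g}=(S_h\circ g)\,(g')^2+S_g$, exactly as you spell out. The only imprecise point is your aside that a sign flip in the branch for $A_g$ \emph{forces} the same flip in $A_{\tilde g}$ --- the branches of $\sqrt{g'}$ and $\sqrt{\tilde g'}$ are independent choices --- but this is harmless: on the simply connected $\Omega$ each lift is single-valued, so $A_{\tilde g}A_g^{-1}$ is well defined up to an overall sign, which affects neither the Maurer--Cartan form nor $AA^{*}$.
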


Osculating M\"{o}bius transformations between a pair of locally univalent functions are closely related to CMC-1 surfaces in hyperbolic space.

\subsection{Smooth CMC-1 surfaces in hyperbolic space}\label{sec:smoothCMC}

By considering moving frames, Robert Bryant deduced a Weierstrass representation of CMC-1 surfaces in terms of holomorphic data \cite{Bryant1987}. The goal of this section is to interpret the formula in terms of osculating M\"{o}bius transformations. It is closely related to holomorphic null curves in $PSL(2,\mathbb{C})$ (See \cite{Small1994}).

Recall that for a smooth surface $f:\Omega \to \mathbb{H}^3$ with unit normal vector field $N$, the \emph{hyperbolic Gauss map} sends each $p\in \Omega$ to a point in $G(p)\in \partial \mathbb{H}^3\cong S^2 $ along the oriented normal geodesics that starts at $f(p)$ in the direction of $N(p)$. For a fixed horosphere, we orient its unit normal so that its hyperbolic Gauss map is constant and the image is the tangency point of the horosphere with $\partial \mathbb{H}^3$.

In the following, the hyperbolic space is identified as a subset of the space of 2 by 2 Hermitian matrices (see Section \ref{sec:hyperbolic}).

\begin{proposition}\cite{Bryant1987,Umehara1993} \label{thm:bryant}
	Suppose $\Omega \subset \mathbb{C}$ is simply connected and $f:\Omega \to \mathbb{H}^3$ is a conformal immersion of a CMC-1 surface. Then there exists a holomorphic immersion $A:\Omega \to SL(2,\mathbb{C})$ such that $f = A A^*$  and 
	\begin{equation}\label{eq:bryant}
		A^{-1} dA = \left( \begin{array}{cc}
	g(w) & -g(w)^2 \\ 1 & -g(w)
	\end{array}\right)  \eta
	\end{equation}
	for some meromorphic function $g$ and a holomorphic 1-form $\eta$ on $\Omega$. The Hopf differential $Q$ satisfies
	\[
	Q = \eta\,  dg = \eta\, g'(w) \,dw
	\]
	and the hyperbolic Gauss map is
		\[
	G(w) = \frac{\frac{\partial (A)_{11}}{\partial w}}{\frac{\partial (A)_{21}}{\partial w}}
	\]
	where $A_{ij}$ is the $(i,j)$-th entry of the matrix $A$.
	
	Conversely, every meromorphic function $g$ and a holomorphic 1-form $\eta$ determine a CMC-1 surface in $\mathbb{H}^3$ via \eqref{eq:bryant}.
\end{proposition}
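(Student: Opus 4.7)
My plan is to prove the direct implication by lifting $f$ to $SL(2,\mathbb{C})$ and showing that the CMC-1 condition lets us choose the lift to be \emph{holomorphic}; the converse and the explicit identifications of the Hopf differential and Gauss map then follow from direct computation, and the representation in the stated form is an algebraic consequence of conformality.

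First, since $\Omega$ is simply connected, I would pick any smooth $F:\Omega\to SL(2,\mathbb{C})$ with $f=FF^*$; every other such lift is $FB$ for a smooth $B:\Omega\to SU(2)$. Writing the Maurer--Cartan form as $F^{-1}dF=\omega_0+\omega$ with $\omega_0$ valued in $su(2)$ (the vertical part encoding the moving frame) and $\omega$ Hermitian traceless (horizontal, encoding the fundamental forms), one decomposes $\omega=P\,dw+P^*\,d\bar w$ in a conformal coordinate $w$; the first fundamental form, shape operator and Hopf differential of $f$ are all read off $P$. The main step is to solve for $B$ so that $A:=FB$ satisfies $\bar\partial A=0$. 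Under gauge, $A^{-1}dA=B^{-1}\omega_0 B+B^{-1}\omega B+B^{-1}dB$, and the holomorphy of $A$ reduces to a $\bar\partial$-equation on $B$ whose integrability is the flatness of a certain connection built from $\omega_0$ and $P$. The key input, due to Bryant, is that the Gauss--Codazzi--Ricci equations for a conformal immersion into $\mathbb{H}^3$ with $H\equiv 1$ translate \emph{exactly} into this zero-curvature condition; this structure-equations reformulation of the CMC-1 hypothesis is the technical heart and the principal obstacle.

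Once $A$ is holomorphic, $\alpha:=A^{-1}dA$ is a holomorphic $sl(2,\mathbb{C})$-valued $(1,0)$-form. Conformality of $f=AA^*$ forces the Killing form $\tfrac12\tr(\alpha^2)$ to vanish as a holomorphic quadratic differential, so $\alpha$ is pointwise traceless nilpotent; every such nonzero matrix has the unique parametrization $\lambda\begin{pmatrix} g & -g^2 \\ 1 & -g\end{pmatrix}$ with $\lambda,g\in\mathbb{C}$. Globalizing yields $\alpha=\begin{pmatrix} g & -g^2\\ 1 & -g\end{pmatrix}\eta$ for a holomorphic 1-form $\eta$ and a meromorphic $g$ whose poles occur exactly where the $(2,1)$-entry of $\alpha$ vanishes. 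Differentiating $f=AA^*$ gives $df=A(\alpha+\alpha^*)A^*$, and comparing with the classical expression for the second fundamental form identifies the Hopf differential as $Q=\eta\,dg$. For the hyperbolic Gauss map $G$, I would use the description that $G(w)\in\partial\mathbb{H}^3\cong\mathbb{CP}^1$ is the endpoint at infinity of the oriented normal geodesic; in the $SL(2,\mathbb{C})$ picture this is precisely the asymptotic direction of the null flag cut out by the first column of $\partial_w A$, which gives $G=(\partial_w A_{11})/(\partial_w A_{21})$.

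For the converse, given $(g,\eta)$ the matrix-valued 1-form $\begin{pmatrix} g & -g^2\\ 1 & -g\end{pmatrix}\eta$ automatically satisfies the Maurer--Cartan equation $d\alpha+\alpha\wedge\alpha=0$: the wedge vanishes because the matrix is nilpotent, and $d\alpha=0$ follows from $d(g^2)=2g\,dg$ combined with $d\eta=0$. Hence on the simply connected $\Omega$ this form integrates to a holomorphic $A:\Omega\to SL(2,\mathbb{C})$, and $f:=AA^*$ is a CMC-1 conformal immersion by reversing the computations above. As a final remark, comparing the resulting Maurer--Cartan form with Proposition \ref{prop:smoothoscu} identifies $A$ with the osculating M\"obius transformation between a pair of locally univalent holomorphic functions via $\eta=\tfrac{S_g-S_{\tilde g}}{2g'}\,dw$, which is the bridge to the discrete theory developed in the remainder of the paper.
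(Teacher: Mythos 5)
The paper offers no proof of this proposition at all: it is imported from Bryant and Umehara--Yamada (hence the citation in its header), and the only argument in the paper that touches it is the proof of Proposition \ref{pro:osccmc}, which \emph{uses} it as a black box. So there is no internal proof to compare against; your attempt has to be judged as a standalone proof of Bryant's representation, and as such it is an outline with the central step missing. In the forward direction, the entire content of the theorem is the claim that the Gauss--Codazzi equations of a conformal immersion with $H\equiv 1$ are exactly the zero-curvature condition that lets you solve the $\bar\partial$-equation for a gauge $B:\Omega\to SU(2)$ making $A=FB$ holomorphic; you explicitly defer this (``the key input, due to Bryant'') rather than derive it, so the CMC-1 hypothesis is never actually used in your argument. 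The converse has the mirror-image gap: ``reversing the computations above'' is not available, because the computation showing that $f=AA^*$ built from a null holomorphic $A$ has mean curvature $\equiv 1$ was never carried out in either direction -- one must compute the first and second fundamental forms of $AA^*$ from $\alpha=A^{-1}dA$ (obtaining the induced metric $(1+|g|^2)^2|\hat\eta|^2|dw|^2$ and second fundamental form $Q+\bar{Q}+ds^2$) to conclude $H\equiv1$, and the same computation is what justifies $Q=\eta\,dg$ and the Gauss-map formula $G=\partial_w A_{11}/\partial_w A_{21}$, both of which you assert rather than derive.

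The pieces you do fill in are sound: for holomorphic $A$, conformality of $AA^*$ is equivalent to vanishing of $\det(A^{-1}dA)$ as a quadratic differential, and a nonzero traceless matrix with zero determinant has the stated nilpotent parametrization, which produces $(g,\eta)$ with poles of $g$ at zeros of the lower-left entry; and $d\alpha+\alpha\wedge\alpha=0$ does hold, so $\alpha$ integrates on the simply connected $\Omega$. Two small corrections there: $d\alpha=0$ is forced by type, since $dg\wedge\eta$ is a wedge of two $(1,0)$-forms on a Riemann surface, not by the product rule $d(g^2)=2g\,dg$; and for the converse one needs mild admissibility of the pair $(g,\eta)$ (for instance $g^2\eta$ holomorphic and $(1+|g|^2)^2|\hat\eta|^2$ nondegenerate) for $AA^*$ to be an immersion -- a caveat the quoted statement itself glosses over. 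In short, your route is the standard one from the cited references, but the analytic heart (CMC-1 $\Leftrightarrow$ holomorphic null lift, in both directions) is cited, not proved, so this is a correct skeleton rather than a complete proof.
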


The \emph{Hopf differential} $Q$ is a holomorphic quadratic differential. It describes the trace-free part of the second fundamental form of $f$ and vanishes at umbilic points, where the normal curvatures in all directions are equal. The 1-form $\eta$ is nowhere vanishing since $A$ is an immersion. Hence $f(p)$ is an umbilic point if and only if $g'(p)=0$. 

We rephrase Proposition \ref{thm:bryant} into a form that is comparable to its discrete counterpart  (Theorem \ref{thm:horo}).

\begin{proposition}\label{pro:osccmc}
	Suppose $f:\Omega \to \mathbb{H}^3$ is a conformal immersion of an umbilic-free CMC-1 surface. Then there exists a pair of locally univalent functions $g,\tilde{g}: \Omega \to \mathbb{C}\cup\{\infty\}$ such that
	\begin{equation}\label{eq:productA}
			f  = A  A^*
	\end{equation}
	where $A: \Omega \to SL(2,\mathbb{C})$ is the osculating M\"{o}bius transformation from $g$ to $\tilde{g}$ satisfying
	\begin{equation}\label{eq:osccomp}
		A = A_{\tilde{g}} A_{g}^{-1}
	\end{equation}
	Furthermore, we have the Hopf differential $
	Q = (S_g - S_{\tilde{g}})/2$ and the hyperbolic Gauss map $G = \tilde{g}$.
	
	Conversely, every pair of locally univalent functions induce a conformal immersion of an umbilic-free CMC-1 surface via Equation \eqref{eq:productA} and \eqref{eq:osccomp}.
\end{proposition}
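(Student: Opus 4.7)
The plan is to reduce the statement to Bryant's representation (Proposition \ref{thm:bryant}) and match Maurer--Cartan forms via Proposition \ref{prop:smoothoscu}. First, I would apply Proposition \ref{thm:bryant} to the given CMC-1 immersion $f$ to obtain holomorphic data $(A,g,\eta)$ with $f=AA^*$ and
\[
A^{-1}dA = \begin{pmatrix} g & -g^2 \\ 1 & -g \end{pmatrix}\eta,
\]
together with $Q=\eta\,dg$. The umbilic-free hypothesis forces $Q$ to be nowhere vanishing; since $\eta$ is nowhere vanishing, this implies $g'\neq 0$ throughout $\Omega$, so $g$ is locally univalent.

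Next, I would produce $\tilde g$ as the solution of a Schwarzian equation. Comparing the Maurer--Cartan form above with that from Proposition \ref{prop:smoothoscu} requires
\[
S_{\tilde g}(w) = S_g(w) - 2g'(w)\,(\eta/dw),
\]
and this third-order equation admits a locally univalent solution on the simply connected $\Omega$, unique up to post-composition by a M\"{o}bius transformation. For such a $\tilde g$, set $\hat A := A_{\tilde g}A_g^{-1}$. By Proposition \ref{prop:smoothoscu}, $\hat A^{-1}d\hat A = A^{-1}dA$, so simple connectivity yields a constant $C\in SL(2,\mathbb{C})$ with $A = C\hat A$. The remaining freedom absorbs $C$: replacing $\tilde g$ by $M\circ \tilde g$, where $M$ is the M\"{o}bius transformation represented by $C$, multiplies $A_{\tilde g}$ on the left by $C$ via the composition rule \eqref{eq:composition} (since $A_M\equiv C$), and hence multiplies $\hat A$ on the left by $C$. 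After this adjustment $\hat A = A$, establishing \eqref{eq:productA} and \eqref{eq:osccomp}; substituting $\eta=(S_g-S_{\tilde g})/(2g')\,dw$ into $Q=\eta\,dg$ then recovers the Hopf differential formula.

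For the hyperbolic Gauss map, I would expand the Maurer--Cartan equation entrywise to get
\[
\partial_w A_{11} = (A_{11}g+A_{12})\,(\eta/dw), \qquad \partial_w A_{21} = (A_{21}g+A_{22})\,(\eta/dw),
\]
so that $G = \partial_w A_{11}/\partial_w A_{21}$ is the image of $g(w)$ under the M\"{o}bius transformation $A(w)$. Writing $A = A_{\tilde g}A_g^{-1}$ and using the defining property $A_g(w)\cdot w = g(w)$, hence $A_g(w)^{-1}\cdot g(w) = w$, one obtains $G(w) = A_{\tilde g}(w)\cdot w = \tilde g(w)$.

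The converse is immediate: for locally univalent $g,\tilde g$, Proposition \ref{prop:smoothoscu} shows that $A := A_{\tilde g}A_g^{-1}$ has Maurer--Cartan form of Bryant type, so $f := AA^*$ is a CMC-1 immersion by the converse in Proposition \ref{thm:bryant}, and the local univalence of $g$ ensures $\eta\,dg$ is nowhere zero, giving umbilic-freeness. The main obstacle in the forward direction is precisely the Schwarzian step together with the absorption of $C$: it requires both the existence of Schwarzian primitives on simply connected domains and a careful tracking of how the M\"{o}bius post-composition $\tilde g\mapsto M\circ \tilde g$ propagates through the composition rule \eqref{eq:composition} for osculating M\"{o}bius transformations.
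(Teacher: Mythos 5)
Your forward-direction argument is essentially the paper's own proof: invoke Bryant's representation (Proposition \ref{thm:bryant}), use umbilic-freeness plus nonvanishing of $\eta$ to get $g'\neq 0$, solve the Schwarzian equation $S_{\tilde g}=S_g-2g'(\eta/dw)$ for a locally univalent primitive, match Maurer--Cartan forms via Proposition \ref{prop:smoothoscu} to get $A=C\hat A$, and absorb the constant $C$ by post-composing $\tilde g$ with the corresponding M\"{o}bius transformation via the composition rule \eqref{eq:composition}. Your Gauss-map computation (reading off that $G(w)$ is the image of $g(w)$ under $A(w)$ and using $A_g(w)\cdot w=g(w)$) just makes explicit what the paper dismisses as ``direct computation,'' and the Hopf-differential identity follows as you say since the Schwarzian is unchanged by the M\"{o}bius post-composition.

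The one step that does not hold up is your justification of the converse: the claim that ``local univalence of $g$ ensures $\eta\,dg$ is nowhere zero'' is not a valid inference. Local univalence gives $g'\neq 0$, so $\eta=(S_g-S_{\tilde g})/(2g')\,dw$ is holomorphic, but $Q=\eta\,dg=\tfrac12(S_g-S_{\tilde g})\,dw^2$ vanishes wherever the two Schwarzians agree; in the extreme case $\tilde g=g$ one gets $A\equiv I$ and $f$ constant, so one does not even obtain an immersion, let alone umbilic-freeness. To make the converse literally true one needs $S_g-S_{\tilde g}$ nowhere vanishing (equivalently $\eta\neq 0$). Note, however, that the paper states the converse without proof and with the same implicit caveat, so this is an imprecision you share with the source rather than a deviation from its argument; aside from it, your proof is correct and follows the paper's route.
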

\begin{proof}
Consider the functions $A, g, \eta$ as in Theorem \ref{thm:bryant}. 
Notice that $g'$ is non-vanishing since $f$ is umbilic free. By considering the Hill's equation, it is known that there exists a locally univalent function $g^{\dagger}$ that solves
\[
\eta =  \frac{S_g(w)-S_{g^{\dagger}}(w)}{2 g'(w)} dw.
\]
The map $g^{\dagger}$ is unique up to a post-composition with a M\"{o}bius transformation. Writing $\tilde{A}:= A_{g^{\dagger}} A^{-1}_g$. We have
\[
A^{-1} dA = \tilde{A}^{-1} d\tilde{A}
\] 
and hence $A= C \tilde{A}$ for some constant $C \in SL(2,C)$. We define $\tilde{g}$ the post-composition of $g^{\dagger}$ with the M\"{o}bius transformation $C$ and obtain
\[
A = A_{\tilde{g}} A^{-1}_g
\]
Hence $A$ is the osculating M\"{o}bius transformation from $g$ to $\tilde{g}$.

The mapping $A$ can be written explicitly in terms of $g$ and $\tilde{g}$. By direct computation, we have the hyperbolic Gauss map
\[
G = \frac{\frac{\partial (A)_{11}}{\partial w}}{\frac{\partial (A)_{21}}{\partial w}}=\tilde{g}
\]
\end{proof}

\section{Discrete osculating M\"{o}bius transformations} \label{sec:discreteosc}

In this section, we define an osculating M\"{o}bius transformation between two Delaunay circle patterns, which induces a realization of the dual graph in $SL(2,\mathbb{C})/\{\pm I\}$. In the subsections, we further investigate lifting to  $SL(2,\mathbb{C})$ and the image of horospheres under an osculating M\"{o}bius transformation.
	
	Analogues of osculating M\"{o}bius transformations between circle patterns have appeared in previous literature. He-Schramm \cite{Schramm1998} introduced ``contact transformations" in order to prove $C^{\infty}$-convergence of circle packings while Bobenko-Pinkall-Springborn \cite{Bobenko2010} considered ``discrete conformal maps".
%

\begin{definition}
	Suppose $z, \tilde{z}: V \to \mathbb{C}$ are two circle patterns with the same combinatorics. The osculating M\"{o}bius transformation from $z$ to $\tilde{z}$ is the mapping  $\tilde{A}: F \to SL(2,\mathbb{C})/\{\pm I\}$ such that for each face $\{ijk\}$, the M\"{o}bius transformation $\tilde{A}_{ijk}$ sends  $z_i,z_j,z_k$ to $\tilde{z}_i,\tilde{z}_j,\tilde{z}_k$ respectively. Explicitly, 
	\begin{align*}
	\tilde{A}_{ijk} = \frac{1}{a_{ijk}}
	\left(
	\begin{array}{cc}
	\frac{\tilde{z}_i \tilde{z}_j (z_j-z_i)+\tilde{z}_i \tilde{z}_k
		(z_i-z_k)+\tilde{z}_j \tilde{z}_k
		(z_k-z_j)}{(z_i-z_j) (z_j-z_k)
		(z_k-z_i)} & \frac{\tilde{z}_i \tilde{z}_j z_k
		(z_i-z_j)+\tilde{z}_i \tilde{z}_k z_j (z_k-z_i)+\tilde{z}_j
		\tilde{z}_k z_i (z_j-z_k)}{(z_i-z_j) (z_j-z_k)
		(z_k-z_i)} \\
	\frac{\tilde{z}_i (z_j-z_k)+\tilde{z}_j (z_k-z_i)+\tilde{z}_k
		(z_i-z_j)}{(z_i-z_j) (z_j-z_k)
		(z_k-z_i)} & \frac{\tilde{z}_i z_i (z_k-z_j)+\tilde{z}_j
		z_j (z_i-z_k)+\tilde{z}_k z_k
		(z_j-z_i)}{(z_i-z_j) (z_j-z_k)
		(z_k-z_i)}
	\end{array}
	\right)
	\end{align*}
	where 
	\[
	a_{ijk} = \pm \sqrt{ \frac{(\tilde{z}_i-\tilde{z}_j) (\tilde{z}_j-\tilde{z}_k)
			(\tilde{z}_k-\tilde{z}_i)}{ (z_i-z_j) (z_j-z_k)
			(z_k-z_i)}}.
	\]
\end{definition}

We consider an analogue of the Maurer-Cartan form, which also characterizes whether a mapping $\tilde{A}: F \to SL(2,\mathbb{C})/\{\pm I\}$ is an osculating M\"{o}bius transformation between circle patterns.

\begin{proposition}\label{prop:transit}
	Suppose $z, \tilde{z}: V \to \mathbb{C}$ are two circle patterns with cross ratios $X, \tilde{X}$. We denote by $\tilde{A}:F \to SL(2,\mathbb{C})/\{\pm I\}$ the osculating M\"{o}bius transformation from $z$ to $\tilde{z}$. Then for every edge oriented from vertex $i$ to $j$, with $\{ijk\}$ and $\{jil\}$ the left triangle and the right triangles, the transition matrix $\tilde{A}_{jil}^{-1} \tilde{A}_{ijk}$ has eigenvectors $(z_i,1)^T$ and $(z_j,1)^T$ with eigenvalues $\pm \lambda_{ij}$ and $\pm \lambda_{ij}^{-1}$ satisfying $\lambda_{ij}^2 = \frac{X_{ij}}{\tilde{X}_{ij}}=\lambda_{ji}^{2}$. Explicitly,
	\begin{equation*} 
	\tilde{A}_{jil}^{-1} \tilde{A}_{ijk}=\frac{1}{z_j-z_i}\left(
	\begin{array}{cc}
	\frac{z_j}{\lambda_{ij}} - \lambda_{ij} z_i & -z_i z_j (\frac{1}{\lambda_{ij}}-\lambda_{ij}) \\
	\frac{1}{\lambda_{ij}} - \lambda_{ij} & \lambda_{ij} z_j - \frac{z_i}{\lambda_{ij}}
	\end{array}
	\right)
	\end{equation*}
	
	Conversely, given a realization $\tilde{A}:F \to SL(2,\mathbb{C})/\{\pm I\}$, it is an osculating M\"{o}bius transformation between two circle patterns if there exists a mapping $z:V \to \mathbb{C}\cup \{\infty\}$ such that $z_i$ is a fixed point of the M\"{o}bius transformation $\tilde{A}_{jil}^{-1} \tilde{A}_{ijk}$ for every edge $\{ij\}$. The other circle pattern $\tilde{z}$ is determined such that $\tilde{z}_i$ is the image of $z_i$ under the M\"{o}bius transformation $\tilde{A}_{ijk}$ for any face $\{ijk\}$ containing vertex $i$.  
\end{proposition}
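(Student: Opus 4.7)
The plan is to deduce everything from the simple observation that the composition $M := \tilde{A}_{jil}^{-1} \tilde{A}_{ijk}$ acts as the identity on two of the four vertex values.

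For the forward direction, both $\tilde{A}_{ijk}$ and $\tilde{A}_{jil}$ send $z_i \mapsto \tilde{z}_i$ and $z_j \mapsto \tilde{z}_j$, so $M$ fixes $z_i$ and $z_j$ as points of $\mathbb{C}\cup\{\infty\}$. Thus $(z_i,1)^T$ and $(z_j,1)^T$ are eigenvectors of any $SL(2,\mathbb{C})$-lift of $M$, and because every such lift has determinant $1$, the two eigenvalues multiply to $1$; the overall $\pm$ ambiguity just reflects the $\{\pm I\}$ quotient. Diagonalizing via $P := \begin{pmatrix} z_i & z_j \\ 1 & 1 \end{pmatrix}$ gives
\begin{equation*}
M = P \, \mathrm{diag}(\lambda_{ij}, \lambda_{ij}^{-1}) \, P^{-1},
\end{equation*}
and expanding this product reproduces the explicit matrix displayed in the statement.

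To identify $\lambda_{ij}^2$ with $X_{ij}/\tilde{X}_{ij}$, I probe $M$ at the off-edge vertex $z_k$. Its image is $M(z_k) = \tilde{A}_{jil}^{-1}(\tilde{z}_k)$. A M\"obius transformation with fixed points $z_i, z_j$ and multiplier $\mu$ at $z_i$ satisfies
\begin{equation*}
\mu = \frac{(M(p)-z_i)(p-z_j)}{(M(p)-z_j)(p-z_i)}
\end{equation*}
for every $p$, and the diagonalization immediately gives $\mu = \lambda_{ij}^{-2}$. Setting $p = z_k$ and using the M\"obius invariance of cross ratios under $\tilde{A}_{jil}$ to re-express $\tilde{A}_{jil}^{-1}(\tilde{z}_k)$ in terms of $z_i, z_j, z_l$ and the tilde data, the right-hand side reorganizes into the cross-ratio ratio $\tilde{X}_{ij}/X_{ij}$ of the two patterns along the edge. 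Equating the two expressions for $\mu$ yields $\lambda_{ij}^2 = X_{ij}/\tilde{X}_{ij}$; the symmetry $\lambda_{ij}^2 = \lambda_{ji}^2$ follows either from $X_{ij} = X_{ji}$ or from the fact that swapping $i \leftrightarrow j$ inverts $M$ and hence inverts its eigenvalues.

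For the converse, suppose $z:V \to \mathbb{C} \cup \{\infty\}$ is a vertex map such that $z_i$ is a common fixed point of every transition matrix $\tilde{A}_{jil}^{-1} \tilde{A}_{ijk}$ along edges at $i$. Define $\tilde{z}_i := \tilde{A}_{ijk}(z_i)$ using any face $\{ijk\}$ containing $i$. The fixed-point hypothesis forces $\tilde{A}_{ijk}(z_i) = \tilde{A}_{jil}(z_i)$ whenever $\{ijk\}$ and $\{jil\}$ share the edge $\{ij\}$; since the link at $i$ is a connected cycle of edge-sharing triangles, $\tilde{z}_i$ is independent of the chosen face. By construction $\tilde{A}_{ijk}$ sends $z_i, z_j, z_k$ to $\tilde{z}_i, \tilde{z}_j, \tilde{z}_k$, which is precisely the defining property of the osculating M\"obius transformation from $z$ to $\tilde{z}$.

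The main obstacle is the cross-ratio bookkeeping in the second paragraph: matching sign and ordering conventions so that the multiplier identity yields exactly $\tilde{X}_{ij}/X_{ij}$ rather than its inverse or negative. Everything else reduces to the uniqueness of a M\"obius transformation prescribed on three points together with the connectedness of the vertex link at each interior vertex.
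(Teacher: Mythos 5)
Your proposal is correct and is essentially the paper's own argument: the common fixed points $z_i,z_j$ give the eigenvectors, the determinant-one lift pairs the eigenvalues as $\lambda_{ij}^{\pm 1}$ (the sign ambiguity being the $\{\pm I\}$ quotient), and the identification $\lambda_{ij}^2=X_{ij}/\tilde{X}_{ij}$ rests on M\"{o}bius invariance of cross ratios, which you implement via the multiplier identity probed at $z_k$ (implicitly factoring through $z_l$ so that invariance under $\tilde{A}_{jil}$ can be applied --- this does reorganize to $\tilde{X}_{ij}/X_{ij}$ with the paper's conventions) while the paper runs the equivalent computation with determinants of homogeneous coordinates; your converse is the same direct verification the paper leaves to the reader. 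One minor caution: the aside that swapping $i\leftrightarrow j$ ``inverts the eigenvalues'' is not by itself conclusive, since the eigenvector attached to the oriented edge swaps as well; your other justification, $X_{ij}=X_{ji}$ and $\tilde{X}_{ij}=\tilde{X}_{ji}$, is the clean one.
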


\begin{proof}

	We pick an arbitrary lift $A:F\to SL(2,\mathbb{C})$. Considering two neighboring faces $\{ijk\}$ and $\{jil\}$, both the M\"{o}bius transformations $\tilde{A}_{ijk},\tilde{A}_{jil}$ map $z_i$ to $\tilde{z}_i$ and maps $z_j$ to $\tilde{z}_j$. We deduce that $z_i,z_j$ are fixed points of the M\"{o}bius transformation $\tilde{A}_{jil}^{-1}\tilde{A}_{ijk}$.
	Thus $A^{-1}_{jil} A_{ijk}$ has eigenvectors $(z_i,1)^T$ and $(z_j,1)^T$ with eigenvalues $\lambda_{ij}$ and $\lambda_{ij}^{-1}$. We have
	\begin{align*}
		\tilde{X}_{ij} 		&=	-\frac{\det \left( \left(\begin{array}{c}
			\tilde{z}_k \\ 1
			\end{array}\right),   \left(\begin{array}{c}
			\tilde{z}_i \\ 1
			\end{array}\right)   \right)  }{\det \left( \left(\begin{array}{c}
			\tilde{z}_i \\ 1
			\end{array}\right), \left(\begin{array}{c}
			\tilde{z}_l \\ 1
			\end{array}\right)   \right)  }  \frac{\det \left( \left(\begin{array}{c}
			\tilde{z}_l \\ 1
			\end{array}\right), \left(\begin{array}{c}
			\tilde{z}_j \\ 1
			\end{array}\right)   \right)  }{\det \left( \left(\begin{array}{c}
			\tilde{z}_j \\ 1
			\end{array}\right), \left(\begin{array}{c}
			\tilde{z}_k \\ 1
			\end{array}\right)   \right)  } \\		
		 &=-\frac{\det \left( A_{ijk}\left(\begin{array}{c}
			z_k \\ 1
			\end{array}\right), A_{ijk}\left(\begin{array}{c}
			z_i \\ 1
			\end{array}\right)   \right)  }{\det \left(A_{ijk} \left(\begin{array}{c}
			z_i \\ 1
			\end{array}\right), A_{jil}\left(\begin{array}{c}
			z_l \\ 1
			\end{array}\right)   \right)  }  \frac{\det \left( A_{jil} \left(\begin{array}{c}
			z_l \\ 1
			\end{array}\right), A_{ijk}\left(\begin{array}{c}
			z_j \\ 1
			\end{array}\right)   \right)  }{\det \left(A_{ijk} \left(\begin{array}{c}
			z_j \\ 1
			\end{array}\right), A_{ijk} \left(\begin{array}{c}
			z_k \\ 1
			\end{array}\right)   \right)  } \\
		&=-\frac{\det \left( \left(\begin{array}{c}
			z_k \\ 1
			\end{array}\right), \left(\begin{array}{c}
			z_i \\ 1
			\end{array}\right)   \right)  }{\det \left(A_{jil}^{-1} A_{ijk} \left(\begin{array}{c}
			z_i \\ 1
			\end{array}\right), \left(\begin{array}{c}
			z_l \\ 1
			\end{array}\right)   \right)  }  \frac{\det \left( \left(\begin{array}{c}
			z_l \\ 1
			\end{array}\right), A_{jil}^{-1} A_{ijk}\left(\begin{array}{c}
			z_j \\ 1
			\end{array}\right)   \right)  }{\det \left( \left(\begin{array}{c}
			z_j \\ 1
			\end{array}\right),  \left(\begin{array}{c}
			z_k \\ 1
			\end{array}\right)   \right)  } \\
		&= X_{ij}/  \lambda_{ij}^2
	\end{align*}

The converse can be verified directly.
\end{proof}

Given an osculating M\"{o}bius transformation $\tilde{A}$ from one circle pattern $z$ to another $\tilde{z}$, the inverse $\tilde{A}^{-1}$ is then an osculating M\"{o}bius transformation from $\tilde{z}$ to $z$. Its transition matrix is expressed in terms of eigenvalues $\lambda$ and $\tilde{z}$. We shall use this in Section \ref{sec:discretecmc}.

\begin{corollary}
	If $A:F \to SL(2,\mathbb{C})$ is an osculating M\"{o}bius transformation from $z$ to $\tilde{z}$, then  the inverse $A^{-1}$ is the osculating M\"{o}bius transformation from $\tilde{z}$ to $z$. Its transition matrix is in the form
	\begin{equation} \label{eq:transit}
	(A^{-1}_{jil})^{-1} (A^{-1}_{ijk}) = A_{jil} A^{-1}_{ijk}= \frac{1}{\tilde{z}_j-\tilde{z}_i}\left(
	\begin{array}{cc}
	\lambda_{ij} \tilde{z}_j  - \frac{\tilde{z}_i}{\lambda_{ij} } & -\tilde{z}_i \tilde{z}_j (\lambda_{ij}- \frac{1}{\lambda_{ij}}) \\ \lambda_{ij} -\frac{1}{\lambda_{ij}}  & \frac{ \tilde{z}_j }{\lambda_{ij}}-  \lambda_{ij} \tilde{z}_i 
	\end{array}
	\right).
	\end{equation} 
	where $\lambda = \pm \sqrt{X/\tilde{X}}$.
\end{corollary}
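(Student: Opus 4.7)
The first assertion is essentially tautological: by definition, the M\"obius transformation $A_{ijk}$ attached to a face $\{ijk\}$ sends the triple $(z_i,z_j,z_k)$ to $(\tilde z_i,\tilde z_j,\tilde z_k)$, so its inverse sends $(\tilde z_i,\tilde z_j,\tilde z_k)$ back to $(z_i,z_j,z_k)$. Hence $A^{-1}\colon F\to SL(2,\mathbb{C})/\{\pm I\}$ is, by the uniqueness of the M\"obius map through three given points, precisely the osculating M\"obius transformation from $\tilde z$ to $z$. The identity $(A_{jil}^{-1})^{-1}(A_{ijk}^{-1})=A_{jil}A_{ijk}^{-1}$ in the corollary is then just an elementary consequence of taking inverses in $SL(2,\mathbb{C})/\{\pm I\}$.

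The plan for the transition matrix formula is to invoke Proposition~\ref{prop:transit} for the osculating M\"obius transformation $A^{-1}$ from $\tilde z$ to $z$, with the roles of the two circle patterns interchanged. Under this swap, the cross ratios are exchanged as well: if $X,\tilde X$ denote the cross ratios of $z,\tilde z$, then the pattern $\tilde z$ has cross ratios $\tilde X$ and its partner $z$ has cross ratios $X$. Proposition~\ref{prop:transit} thus produces an eigenvalue $\mu_{ij}$ with
\[
\mu_{ij}^{2}=\frac{\tilde X_{ij}}{X_{ij}}=\frac{1}{\lambda_{ij}^{2}},
\]
so $\mu_{ij}=\pm\lambda_{ij}^{-1}$, consistent with the ambiguity of the $\pm$ lift already present in~$\lambda$.

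Substituting $z_i\mapsto \tilde z_i$, $z_j\mapsto \tilde z_j$ and $\lambda_{ij}\mapsto \lambda_{ij}^{-1}$ into the explicit formula for the transition matrix in Proposition~\ref{prop:transit} produces
\[
\frac{1}{\tilde z_j-\tilde z_i}\begin{pmatrix}
\lambda_{ij}\tilde z_j-\dfrac{\tilde z_i}{\lambda_{ij}} & -\tilde z_i\tilde z_j\!\left(\lambda_{ij}-\dfrac{1}{\lambda_{ij}}\right)\\[4pt]
\lambda_{ij}-\dfrac{1}{\lambda_{ij}} & \dfrac{\tilde z_j}{\lambda_{ij}}-\lambda_{ij}\tilde z_i
\end{pmatrix},
\]
which is exactly \eqref{eq:transit}. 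There is essentially no obstacle here: the argument is purely a bookkeeping substitution in a result we have already proved, and the only thing one needs to be careful about is that the sign ambiguity in $\lambda$ and $\mu=\lambda^{-1}$ is compatible with the $\{\pm I\}$ quotient, which it manifestly is because the formula is invariant under $\lambda\mapsto -\lambda$ up to an overall sign of the matrix.
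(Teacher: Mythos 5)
Your proposal is correct and follows essentially the same route the paper intends: the corollary is stated there without proof as an immediate consequence of Proposition \ref{prop:transit}, which is exactly what you carry out by swapping the roles of $z$ and $\tilde z$, observing $\mu_{ij}^2=\tilde X_{ij}/X_{ij}=\lambda_{ij}^{-2}$, and substituting $\tilde z_i,\tilde z_j,\lambda_{ij}^{-1}$ into the transition-matrix formula. Your remark that the $\pm$ ambiguity only changes the matrix by an overall sign, hence is harmless in $SL(2,\mathbb{C})/\{\pm I\}$, is also the right way to handle the lift.
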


As an analogue of the smooth theory (Equation \eqref{eq:composition}), discrete osculating M\"{o}bius transformations satisfy an obvious composition rule.
\begin{corollary}
	Suppose $z,\tilde{z},z^{\dagger}:V \to \mathbb{C}$ are three circle patterns with the same combinatorics. Then the osculating M\"{o}bius transformation from $z$ to $z^{\dagger}$ is the product of the osculating M\"{o}bius transformation from $z$ to $\tilde{z}$ with that from $\tilde{z}$ to $z^{\dagger}$.
\end{corollary}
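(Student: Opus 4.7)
The plan is to invoke on each face the uniqueness statement for Möbius transformations recorded in Section \ref{sec:mobius}: there is a unique element of $SL(2,\mathbb{C})/\{\pm I\}$ sending any ordered triple of distinct points in the plane to any other ordered triple of distinct points. Since the osculating Möbius transformation between two circle patterns is defined face by face, it suffices to verify the composition rule on a single face and observe that the resulting face-wise identities assemble into the claimed equality of maps $F \to SL(2,\mathbb{C})/\{\pm I\}$.

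More concretely, fix a face $\{ijk\}$ and let $A_{ijk}$ denote the osculating Möbius transformation from $z$ to $\tilde z$ on that face, $B_{ijk}$ the one from $\tilde z$ to $z^{\dagger}$, and $C_{ijk}$ the one from $z$ to $z^{\dagger}$. By definition $A_{ijk}$ sends $z_i, z_j, z_k$ to $\tilde z_i, \tilde z_j, \tilde z_k$, and $B_{ijk}$ sends $\tilde z_i, \tilde z_j, \tilde z_k$ to $z^{\dagger}_i, z^{\dagger}_j, z^{\dagger}_k$. Consequently the composition $B_{ijk} A_{ijk}$ sends $z_i, z_j, z_k$ to $z^{\dagger}_i, z^{\dagger}_j, z^{\dagger}_k$, and by uniqueness it coincides with $C_{ijk}$ as an element of $SL(2,\mathbb{C})/\{\pm I\}$. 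This is the only substantive step.

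I do not anticipate a genuine obstacle. The only point to keep in mind is the sign ambiguity: working in $SL(2,\mathbb{C})/\{\pm I\}$, the product of cosets $\{\pm A_{ijk}\}\{\pm B_{ijk}\}=\{\pm B_{ijk} A_{ijk}\}$ is well-defined, so the equality $C_{ijk} = B_{ijk} A_{ijk}$ is unambiguous and there is no need to choose consistent lifts to $SL(2,\mathbb{C})$ for this statement. If one wished to upgrade the corollary to an actual equality of matrix-valued maps, one would need to address the branch choices discussed earlier in connection with $a_{ijk}=\pm\sqrt{\cdots}$, but this is not required by the statement as written.
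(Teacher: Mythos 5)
Your argument is correct and is exactly the intended one: the paper states this corollary without proof as an ``obvious composition rule,'' relying precisely on the facewise uniqueness of the M\"{o}bius transformation determined by three points, which is what you invoke. Your added remark that the product is unambiguous in $SL(2,\mathbb{C})/\{\pm I\}$, so no coherent lifting is needed at this stage, is also accurate and consistent with the paper's later, separate treatment of lifts.
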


\subsection{Coherent lifting to $SL(2,\mathbb{C})$}
We are interested in a lift $A: F \to SL(2,\mathbb{C})$ where signs over faces are chosen consistently so that the map $A$ is as ``continuous" as in the smooth theory (Equation \eqref{eq:smoothmob}). Such consistency is used for convergence in Section \ref{sec:convergence} and the Delaunay condition plays a role here. 


Consider the transition matrix in Proposition \ref{prop:transit}, the ambiguity of sign is due to the branches of the square root $\lambda = \pm \sqrt{X/\tilde{X}}$. In the following, we shall fix a branch for $\lambda$ in order to define a lift $A:F \to SL(2, \mathbb{C})$. Notice that when two circle patterns are identical $X=\tilde{X}$, it is natural to expect the transition matrix has eigenvalues $\lambda=1$ rather than $\lambda=-1$.

\begin{definition}\label{def:osculatinglift}
	Suppose $z,\tilde{z}:V \to \mathbb{C}$ are two circle patterns with cross ratios $X$ and $\tilde{X}$. The osculating M\"{o}bius transformation from $z$ to $\tilde{z}$ induce a map $\tilde{A}:F \to SL(2,\mathbb{C})/\{\pm I\}$.  We call its lift $A:F \to SL(2,\mathbb{C})$ coherent if the sign over faces is chosen such that the transition matrix satisfies
	\begin{equation}\label{eq:transA}
			A_{jil}^{-1} A_{ijk}=\frac{1}{z_j-z_i}\left(
		\begin{array}{cc}
		\frac{z_j}{\lambda_{ij}} - \lambda_{ij} z_i & -z_i z_j (\frac{1}{\lambda_{ij}}-\lambda_{ij}) \\
		\frac{1}{\lambda_{ij}} - \lambda_{ij} & \lambda_{ij} z_j - \frac{z_i}{\lambda_{ij}}
		\end{array}
		\right)
	\end{equation}
	with
	\begin{equation} \label{eq:lambdasign}
			-\frac{\pi}{2}< \Arg \lambda  \leq \frac{\pi}{2}.
	\end{equation}
\end{definition}

\begin{proposition}
  A coherent lift $A: F \to SL(2,\mathbb{C})$ exists if both circle patterns are Delaunay and the domain is simply connected. Such a lift is unique up to multiplying $-1$ to $A$ over all faces at the same time.
\end{proposition}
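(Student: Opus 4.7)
The plan is to treat the existence problem as a monodromy computation on the dual $1$-skeleton of $M$. I would fix a base face $\phi_0 \in F$ together with an arbitrary lift $A_{\phi_0} \in SL(2,\mathbb{C})$ of $\tilde{A}_{\phi_0}$, and then extend $A$ across each dual edge by solving \eqref{eq:transA} with the branch of $\lambda_{ij}$ selected by the window \eqref{eq:lambdasign}. Because $\tilde{A}$ is already single-valued as a $PSL(2,\mathbb{C})$-valued map, the ordered product of the transition matrices around the link of any interior vertex $i$ equals $\pm I$ in $SL(2,\mathbb{C})$; whether it equals $+I$ or $-I$ is the obstruction to local lifting, and as spelled out in the paragraph immediately before the statement, this sign is read off from $\prod_j \lambda_{ij}$ taken over the edges at $i$. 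Since $\Omega$ is simply connected, vanishing of these local obstructions at every interior vertex is enough to make the candidate lift globally consistent along any dual path.

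The heart of the argument is to use the Delaunay hypothesis to force every $\lambda_{ij}$ into the "principal" branch. From $\lambda_{ij}^{2} = X_{ij}/\tilde{X}_{ij}$ and the normalization \eqref{eq:lambdasign}, one has
\[
\Arg \lambda_{ij} = \frac{\Arg X_{ij} - \Arg \tilde{X}_{ij}}{2} + k_{ij}\pi, \qquad k_{ij} \in \{-1,0,1\}.
\]
The Delaunay condition gives $\Arg X_{ij}, \Arg \tilde{X}_{ij} \in [0,\pi)$, so the leading term already lies in $(-\pi/2, \pi/2)$ and $k_{ij}=0$ for every edge. Summing over the edges incident to an interior vertex $i$ and invoking the no-branching condition from Definition \ref{def:crsys} for both cross ratio systems,
\[
\sum_j \Arg \lambda_{ij} = \frac{1}{2}\Bigl(\sum_j \Arg X_{ij} - \sum_j \Arg \tilde{X}_{ij}\Bigr) = \frac{1}{2}(2\pi - 2\pi) = 0,
\]
so $\prod_j \lambda_{ij} = 1$ and the monodromy around $i$ is $+I$. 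Combined with the simple connectedness of $\Omega$, this yields a globally well-defined lift $A: F \to SL(2,\mathbb{C})$.

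For uniqueness I would observe that any two coherent lifts differ by a sign function $\varepsilon : F \to \{\pm 1\}$. The transition rule \eqref{eq:transA} with a common branch of $\lambda_{ij}$ on both sides forces $\varepsilon$ to take the same value on the two faces sharing each dual edge; hence $\varepsilon$ is constant on the connected set $F$, giving exactly the $\pm A$ ambiguity claimed. The only genuinely delicate point in the plan is confirming that the Delaunay bounds really are what pin $k_{ij}$ to zero on every edge, including when either $\Arg X_{ij}$ or $\Arg \tilde{X}_{ij}$ is near $0$ or $\pi$; once that is secured, the rest of the argument is bookkeeping around interior-vertex links.
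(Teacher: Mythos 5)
Your proof is correct and follows essentially the same route as the paper: the Delaunay bounds force $\Arg\lambda_{ij}=\tfrac{1}{2}(\Arg X_{ij}-\Arg\tilde X_{ij})\in(-\tfrac{\pi}{2},\tfrac{\pi}{2})$, the no-branching condition makes the arguments sum to zero around each interior vertex so the monodromy of the transition matrices is $+I$, and simple connectedness then yields the global lift. Your explicit uniqueness step and the closing worry about endpoint cases are fine but add nothing beyond the paper's argument, since $\Arg X,\Arg\tilde X\in[0,\pi)$ already pins the branch uniquely.
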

\begin{proof}
	The Delaunay condition yields
	\[
	-\frac{\pi}{2} < \frac{\Arg X - \Arg {\tilde{X}}}{2} < \frac{\pi}{2},
	\]
	and hence we could choose
	\[
	\Arg \lambda = \frac{ \Arg X - \Arg {\tilde{X}} }{2}.
	\]
	For each vertex $i$, we indeed have
	\[
	\Arg \prod_{j} \lambda_{ij} = \sum_j \Arg \lambda_{ij} = \frac{\sum _j \Arg X_{ij} - \sum_j \Arg {\tilde{X}_{ij}} }{2} =0 \mod 2\pi
	\]
	Thus multiplying the transition matrices (the right side of Equation \eqref{eq:transA}) around a vertex always yields the identity. It implies whenever the sign of one face is fixed, the sign of neighboring faces are determined by condition \eqref{eq:lambdasign} consistently.
\end{proof}

Notice that such a lift might not exist for non-Delaunay circle patterns. Recall that generally we have $-\pi <\Arg X \leq \pi$ and hence
\[
-\pi < \frac{\Arg X - \Arg {\tilde{X}}}{2} < \pi.
\]
Imposing condition \ref{eq:lambdasign} and the condition that $\lambda^2 = X/\tilde{X}$ yields
\[
\Arg \lambda = \frac{ \Arg X - \Arg {\tilde{X}} }{2} + k \pi
\]
where $k=0, \pm 1$. Thus for a fixed vertex $i$
\[
\Arg \prod_{j} \lambda_{ij} = \sum_j \Arg \lambda_{ij}  =0  \text{ or } \pi   \mod  2\pi
\]
where the product and the sum is over all edges $\{ij\}$ connected to vertex $i$. In case the argument is $\pi$, multiplying the transition matrices (the right side of Equation \eqref{eq:transA}) around vertex $i$ yields $-I$ instead of the identity $I$. It implies no coherent lift exists around the vertex $i$.

\subsection{Horospheres at vertices} \label{sec:horosphersver}
This section illustrates how horospherical nets arise from osculating M\"{o}bius transformations.

Suppose $z:V \to \mathbb{C}$ is a circle pattern. We consider a collection of horospheres $\{H_i\}_{i\in V}$ such that $H_i$ is a horosphere tangent to the Riemann sphere at $z_i$. Given another circle pattern $\tilde{z}:V \to \mathbb{C}$, we are interested in determining whether the osculating M\"{o}bius transformation induces a new collection of horospheres $\{\tilde{H}_i\}_{i\in V}$ adapted to the circle pattern $\tilde{z}$.


	\begin{proposition}
		Suppose $z:V \to \mathbb{C}$ is a circle pattern equipped with a collection of horospheres $\{H_i\}_{i\in V}$ such that $H_i$ is a horosphere tangent to the Riemann sphere at $z_i$. Then a circle pattern $\tilde{z}:V \to \mathbb{C}$ shares the same modulus of cross ratios with $z$, i.e. $|\tilde{X}|=|X|$, if and only if a collection of horospheres $\{\tilde{H}_i\}_{i\in V}$ is induced such that for $i \in V$
		\begin{enumerate}
			\item $\tilde{H}_i$ is a horosphere tangent to the Riemann sphere at $\tilde{z}_i$ and
			\item $\tilde{H}_i = A_{ijk}(H_i)$ for every face $\{ijk\}$ containing vertex $i$.
		\end{enumerate}
%
\end{proposition}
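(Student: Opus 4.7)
The plan is to translate both sides of the equivalence into the same computable condition on the eigenvalue $\lambda_{ij}$ appearing in the transition matrix of Proposition \ref{prop:transit}, using the hermitian matrix model of $\mathbb{H}^3$ and its boundary introduced in Section \ref{sec:hyperbolic}. Recall that a horosphere tangent to the Riemann sphere at $z_i$ corresponds to a null vector in $L^+$ of the form
\[
U_i = \rho_i \begin{pmatrix} z_i \\ 1 \end{pmatrix} \begin{pmatrix} \bar{z}_i & 1 \end{pmatrix}
\]
for some $\rho_i > 0$, and that $SL(2,\mathbb{C})$ acts on Hermitian matrices by $U \mapsto A U A^*$; note that this action descends to $SL(2,\mathbb{C})/\{\pm I\}$, so the sign ambiguity in the osculating M\"obius transformation is harmless here.

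The key lemma I would isolate is the following direct calculation. If a M\"obius transformation $A$ satisfies $A (z_i, 1)^T = \lambda_{ij} (z_i, 1)^T$, then
\[
A U_i A^* = \rho_i \bigl(A (z_i,1)^T\bigr)\bigl(A (z_i,1)^T\bigr)^* = |\lambda_{ij}|^2\, U_i.
\]
Applied to the transition matrix $A_{jil}^{-1} A_{ijk}$ across an edge $\{ij\}$, which by Proposition \ref{prop:transit} fixes $(z_i,1)^T$ with eigenvalue $\lambda_{ij}$, this gives $A_{jil}^{-1} A_{ijk} \cdot H_i = H_i$ if and only if $|\lambda_{ij}| = 1$. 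Combining with the identity $\lambda_{ij}^2 = X_{ij}/\tilde{X}_{ij}$, the condition $|\lambda_{ij}|=1$ for every edge is equivalent to $|X_{ij}| = |\tilde{X}_{ij}|$ for every edge, i.e., $\Re \log X \equiv \Re \log \tilde{X}$, which is precisely the same shear coordinates condition.

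With this in hand both directions become short. For the ``only if'' direction, assume the shear coordinates coincide. Then the above key calculation shows that for every face $\{ijk\}$ and every neighbouring face $\{jil\}$, the image $A_{ijk}(H_i)$ agrees with $A_{jil}(H_i)$; since the link of $i$ is connected, the horosphere $\tilde{H}_i := A_{ijk}(H_i)$ is well-defined independently of the face chosen. Moreover, because $A_{ijk}$ is a M\"obius transformation sending $z_i$ to $\tilde{z}_i$, it carries any horosphere tangent at $z_i$ to a horosphere tangent at $\tilde{z}_i$, so $\tilde{H}_i$ has the required tangency property. Conversely, if such a consistent family $\{\tilde{H}_i\}$ exists, then for each edge $\{ij\}$ we have $A_{jil}^{-1} A_{ijk}(H_i) = H_i$, whence $|\lambda_{ij}|^2 = 1$ by the key calculation, and so $|X_{ij}| = |\tilde{X}_{ij}|$ for every edge.

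I do not expect a serious obstacle: the sign ambiguity in the lift disappears in the action $U \mapsto A U A^*$, and consistency around a vertex follows simply from pairwise consistency across shared edges together with the connectedness of the link. The only mildly subtle point is keeping track of the role of tangency at $z_i$ versus $\tilde{z}_i$ and ensuring that the scalar $|\lambda_{ij}|^2$ really controls the rescaling of the null vector $U_i$, which is handled by the one-line computation displayed above.
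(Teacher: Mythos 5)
Your proposal is correct and follows essentially the same route as the paper: the paper's proof likewise reduces the statement to the observation that the transition matrix $A_{jil}^{-1}A_{ijk}$ fixes the horosphere $H_i$ if and only if its eigenvalue at $(z_i,1)^T$ satisfies $|\lambda_{ij}|=1$, which is the shear-coordinate condition since $\lambda_{ij}^2=X_{ij}/\tilde{X}_{ij}$. The only cosmetic difference is that you spell out the null-vector computation $A U_i A^* = |\lambda_{ij}|^2 U_i$ explicitly (which is exactly the content of Lemma \ref{lem:horosphere}, used later in the paper), whereas the paper's proof of this proposition simply asserts the equivalence.
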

\begin{proof}
	Notice that $A_{ijk}$ is a M\"{o}bius transformation mapping $z_i$ to $\tilde{z}_i$. As an isometry of $\mathbb{H}^3$, it map $H_i$ to a horosphere $A_{ijk}(H_i)$ tangent at $\tilde{z}_i$. It remains to check if $\{ijk\},\{jil\}$ are two neighboring triangles, then the horospheres $A_{ijk}(H_i)= A_{jil}(H_i)$ and $A_{ijk}(H_j)= A_{jil}(H_j)$. It holds if and only if the eigenvalues of $A^{-1}_{jil}A_{ijk}$ have norm $|\lambda_{ij}|=1$, which is equivalent to say $z$ and $\tilde{z}$ share the same modulus of cross ratios $|X| = |\tilde{X}|$ since $|\lambda|=\sqrt{|X/\tilde{X}|}$. 
\end{proof}

To relate horospherical nets in the next section, we consider a collection $\{H_i\}_{i \in V}$ such that $H_i$ is the unique horosphere that passes through the center $O$ in the unit ball model of $\mathbb{H}^3$ and tangent to the Riemann sphere at $z_i$. Thus $O$ is the common intersection point of $\{H_i\}_{i \in V}$. For another circle pattern $\tilde{z}$ sharing the same modulus of cross ratios, the induced horospheres $\{\tilde{H}_i\}_{i \in V}$ generally do not intersect at a common point. Indeed, representing the center $O$ as the identity matrix in the Hermitian matrix model, the M\"{o}bius transformation $A_{ijk}$ maps $O \in H_i \cap H_j \cap H_k$ to the point $A_{ijk} A_{ijk}^* \in \Herm(2)$, which is an intersection point in $\tilde{H}_i \cap \tilde{H}_j \cap \tilde{H}_k$. As shown in Theorem \ref{thm:horo}, the image points form a realization of the dual graph regarded as a discrete CMC-1 surface (See Figure \ref{fig:horocmc1}).

\begin{figure}
\definecolor{uuuuuu}{rgb}{0.26666666666666666,0.26666666666666666,0.26666666666666666}
\begin{tikzpicture}[line cap=round,line join=round,>=triangle 45,x=1.0cm,y=1.0cm,scale=3]
\clip(-1.1078965292064096,-1.1513909442889898) rectangle (3.590595408618125,1.102684171984352);
\draw [line width=0.5pt,dash pattern=on 1pt off 1pt] (0.,0.) circle (1.cm);
\draw [line width=.5pt] (-0.435222047730096,-0.2461336408734533) circle (0.5cm);
\draw [line width=.5pt] (-0.3,-0.4) circle (0.5cm);
\draw [line width=.5pt] (-0.01589643662948944,-0.49974723941457105) circle (0.5cm);
\draw [line width=.5pt] (0.3,-0.4) circle (0.5cm);
\draw [line width=.5pt] (0.4428085726075621,-0.23220802747806407) circle (0.5cm);
\draw (-0.09197118302496266,-1.0248248151006866) node[anchor=north west] {$z_i$};
\draw [line width=0.5pt,dash pattern=on 1pt off 1pt] (2.4,0.) circle (1.cm);
\draw [line width=.5pt] (1.7529854586888014,-0.4323970013086774) circle (0.22180016486195725cm);
\draw [line width=.5pt] (2.010036941768739,-0.67252299573905) circle (0.2225951076929705cm);
\draw [line width=.5pt] (2.4,-0.770597856484146) circle (0.22940214351585397cm);
\draw [line width=.5pt] (2.784334682995496,-0.6649288634770114) circle (0.2319872774243395cm);
\draw [line width=.5pt] (3.0496920066110573,-0.4130158054049395) circle (0.23014172799885246cm);
\draw [shift={(1.7529854586888014,-0.4323970013086774)},line width=2.2pt]  plot[domain=-0.09253133215357146:1.483273048653535,variable=\t]({1.*0.22180016486195628*cos(\t r)+0.*0.22180016486195628*sin(\t r)},{0.*0.22180016486195628*cos(\t r)+1.*0.22180016486195628*sin(\t r)});
\draw [shift={(2.010036941768739,-0.67252299573905)},line width=2.2pt]  plot[domain=0.23568156080607866:1.7341497444097895,variable=\t]({1.*0.22259510769297053*cos(\t r)+0.*0.22259510769297053*sin(\t r)},{0.*0.22259510769297053*cos(\t r)+1.*0.22259510769297053*sin(\t r)});
\draw [shift={(2.4,-0.770597856484146)},line width=2.2pt]  plot[domain=0.7994113226964386:2.428599568063112,variable=\t]({1.*0.22940214351585386*cos(\t r)+0.*0.22940214351585386*sin(\t r)},{0.*0.22940214351585386*cos(\t r)+1.*0.22940214351585386*sin(\t r)});
\draw [shift={(2.784334682995496,-0.6649288634770114)},line width=2.2pt]  plot[domain=1.4134720632834537:2.885335387107162,variable=\t]({1.*0.23198727742434058*cos(\t r)+0.*0.23198727742434058*sin(\t r)},{0.*0.23198727742434058*cos(\t r)+1.*0.23198727742434058*sin(\t r)});
\draw [shift={(3.0496920066110573,-0.4130158054049395)},line width=2.2pt]  plot[domain=1.7999762531034311:3.240784704585702,variable=\t]({1.*0.23014172799885244*cos(\t r)+0.*0.23014172799885244*sin(\t r)},{0.*0.23014172799885244*cos(\t r)+1.*0.23014172799885244*sin(\t r)});
\draw (2.3167797396858024,-1.0248248151006866) node[anchor=north west] {$\tilde{z}_i$};
\draw (0.2154677571989289,-0.4516377180995189) node[anchor=north west] {$H_i$};
\draw (2.3167797396858024,-0.6563434785553244) node[anchor=north west] {$\tilde{H}_i$};
\begin{scriptsize}
\draw [fill=uuuuuu] (0.,0.) circle (1.0pt);
\draw[color=uuuuuu] (-0.01304099865871409,0.1315356692455083) node {$O$};
\draw [fill=black] (-0.870444095460192,-0.4922672817469066) circle (.5pt);
\draw [fill=black] (-0.6,-0.8) circle (.5pt);
\draw [fill=black] (-0.03179287325897888,-0.9994944788291421) circle (.5pt);
\draw [fill=black] (0.6,-0.8) circle (.5pt);
\draw [fill=black] (0.8856171452151242,-0.46441605495612814) circle (.5pt);
\draw [fill=black] (1.5685753297590632,-0.555637487679467) circle (.5pt);
\draw [fill=black] (1.8983784356257327,-0.8650871667957587) circle (.5pt);
\draw [fill=black] (2.4,-1.) circle (.5pt);
\draw [fill=black] (2.9004274951417006,-0.8657784486265544) circle (.5pt);
\draw [fill=black] (3.243911185005867,-0.5364829091611342) circle (.5pt);
\draw [fill=black] (1.772373361133687,-0.21144582486560207) circle (1.0pt);
\draw [fill=black] (1.9738367690863194,-0.45289119142417406) circle (1.0pt);
\draw [fill=black] (2.2264785009353436,-0.6205457561588806) circle (1.0pt);
\draw [fill=black] (2.5599228593451655,-0.6061289460222147) circle (1.0pt);
\draw [fill=black] (2.8206815400877883,-0.4358066191045576) circle (1.0pt);
\draw [fill=black] (2.9974086461733176,-0.18889158719629706) circle (1.0pt);
\end{scriptsize}
\end{tikzpicture}
\caption{The horospheres $\{H_i\}$ on the left passes through a common vertex $O$, the center of the unit ball model. The osculating M\"{o}bius transformation induces horospheres $\{\tilde{H}_i\}$ adapted to another circle pattern. As seen in Section \ref{sec:weier}, the thick arcs form a cross section of a discrete CMC-1 surface.}
\label{fig:horocmc1}
\end{figure}
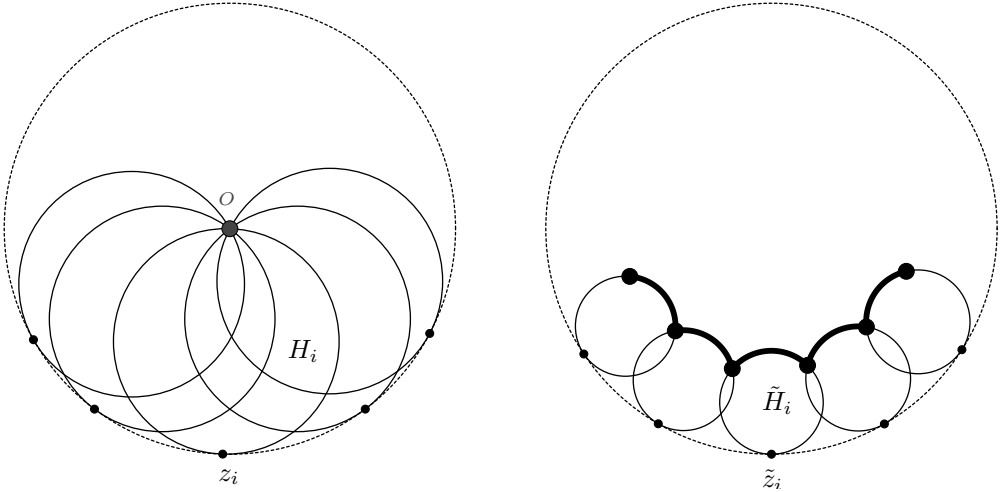

Generally, one could pick any horosphere at vertices, e.g. horospheres that are tangent to a prescribed totally geodesic plane or tangent to a prescribed horosphere, and consider their image under an osculating M\"{o}bius transformation. The image horospheres shall enclose many surfaces of interest. Such choices of horospheres are reminiscent of the smooth theory: any conformal metric $\rho_{\infty}$ on a domain $\Omega \subset \partial \mathbb{H}^3$ yields a collection of horospheres $\{H_w\}_{w\in\Omega}$ by considering visual metrics. For more information on the smooth theory, see the Epstein map in  \cite[Section 3]{Anderson1998}.


%
 
\section{Discrete CMC-1 surfaces in hyperbolic space} \label{sec:discretecmc}

In this section, we define a notion of integrated mean curvature on horospherical nets. It is motivated by Steiner's formula relating curvatures to the change in area under parallel surfaces. The main result is to prove the correspondence between discrete CMC-1 surfaces and circle patterns  (Theorem \ref{thm:horo}).

Recall that we denote by $(V,E,F)$ a cell decomposition of a surface, where $V,E,F$ are the sets of (primal) vertices, edges and faces. It is associated with a dual cell decomposition $(V^*,E^*,F^*)$,  where $V^*,E^*,F^*$ denote the sets of dual vertices, dual edges and dual faces. Each dual vertex corresponds to a primal face, whereas each dual face corresponds to a primal vertex. By subdivision, we assume the primal cell decomposition is a triangulation without loss of generality. The dual mesh then becomes a trivalent mesh, i.e. each vertex has three neighboring vertices. Furthermore, for simplicity we consider surfaces without boundary. In case the surface has boundary, the same result also holds but requires modification of notations.

The following choice of notations is taken so that it will be consistent with the osculating M\"{o}bius transformations.
\begin{definition}\label{def:horo}
	A horospherical net is a realization $f:V^{*} \to \mathbb{H}^{3}$ of a dual mesh such 
	that
	\begin{enumerate}
	   \item  The vertices of each dual face lies on the same horosphere. 
	   \item Every edge is realized as the shorter circular arc in the intersection of the two neighboring horospheres. The hyperbolic lengths of the arcs are denoted as $\ell$.
	   \item  The tangency points of the horospheres with $\partial \mathbb{H}^3$ define a realization of the primal triangular mesh $\tilde{z}:V \to \mathbb{C} \cup \{\infty\}$ that forms a Delaunay circle pattern. We call $\tilde{z}$ the hyperbolic Gauss map.
	\end{enumerate} 
\end{definition}

We further define the \textit{dihedral angle} $\alpha:E^* \to (-\pi,\pi)$ between neighboring horospheres. Its sign is defined as follows: consider a dual edge oriented from dual vertex $u$ to $v$ and let $x$ be a point on the dual edge under $f$. It is associated with three unit vectors $(T_{uv},N_l,N_r)$ in $T_{x}\mathbb{H}^3$,  where $T_{uv}$ is the unit tangent vector of the circular arc oriented from $u$ to $v$ while $N_l, N_r$ are the normals of the horospheres from the left and the right face which are oriented toward the points of tangency with $\partial \mathbb{H}^3$. The angle $\alpha$ is determined such that
\begin{align*}
\sin \alpha_{uv} &= \langle N_l \times N_r , T_{uv} \rangle \\
\cos \alpha_{uv} &= \langle N_l,  N_r \rangle
\end{align*}
In particular, the horospheres coincide if $\alpha=0$.

\subsection{Integrated mean curvature via parallel surfaces}

In this subsection, we introduce integrated mean curvature over the faces of a horospherical net. Our approach is motivated by the curvature theory of polyhedral surfaces in $\mathbb{R}^3$ by Bobenko-Pottmann-Wallner \cite{Bobenko2010a}.

Recall that in the smooth theory, the Steiner formula relates the mean curvature and Gaussian curvature of a smooth surface to the area of its parallel surface. We denote by $f:\Omega \subset \mathbb{C} \to \mathbb{H}^3$ a smooth immersion and $N$ its unit normal vector field. For small $t$, we write $f_t: \Omega \to \mathbb{H}^3$ the mapping such that for every $x \in \Omega$, the geodesic starting at $f(x)$ in the direction of $N(x)$ with length $t$ ends at $f_t(x)$. We call $f_t$ a parallel surface of $f$ at distance $t$. For each $t$, we denote by $\omega_t$ the area $2$-form induced from the hyperbolic metric in $\mathbb{H}^3$ via $f_t$. By direction computation (see \cite[Section 4]{Epstein1984})
\begin{align}
\frac{d}{dt} \, \omega_t |_{t=0}& = -2H \omega \label{eq:Steinerm}\\
\frac{d^2}{dt^2} \, \omega_t|_{t=0} &= (2K + 4) \omega  \label{eq:Steinerk}
\end{align}
where $H$ and $K$ is the mean curvature and Gaussian curvature of $f$.

\begin{example}
	Let $f$ be a horosphere and $N$ the unit normal pointing toward the tangency point of the horosphere with $\partial \mathbb{H}^3$. We denote by $f_t$ the parallel surfaces which remain as horospheres for all $t$. Using the upper half space model, we have the area 2-form 
	\begin{align*}
	\omega_t = e^{-2t} \omega = (1-2t +\frac{(2t)^2}{2} +\dots) \omega
	\end{align*}
	Compared with \eqref{eq:Steinerm} and \eqref{eq:Steinerk}, it shows that the horosphere has constant mean curvature $H\equiv 1$ and constant Gaussian curvature $K\equiv 0$.
\end{example}

Similarly we consider parallel surfaces for a trivalent horospherical net $f$. Here we assume the edges have non-zero lengths. Every vertex is the intersection point of the three neighboring horospheres. For every small $t$, the parallel surfaces of the three horospheres at distance $t$ are again horospheres and intersect. The intersection points continuously define a new mapping  $f_t$ of the trivalent mesh. By construction, $f_t$ is a horospherical net.

We compute the change in face area using the upper half space model. Let $\phi$ be a face and we normalize the horosphere containing $f(\phi)$ as the horizontal plane $x_3=1$. Then $f_{t}(\phi)$ is contained in the plane $x_3=e^{t}$. We denote by $\widetilde{\area}(f_{t}(\phi))$ the Euclidean area on the horosphere $x_3=e^{t}$. Then its hyperbolic area is
\[
\Area(f_{t}(\phi)) = e^{-2t} \widetilde{\area}(f_{t}(\phi)).
\]
The change in area can be expressed in terms of edge lengths and dihedral angles, which motivates the definition of integrate mean curvature in Definition \ref{def:intmean}.

\begin{proposition}
	\[
	\frac{d}{dt} \area(f_t(\phi)) |_{t=0} = -2 \area(f(\phi)) -  \sum_{ij \in \phi} \ell_{ij} \tan \frac{\alpha_{ij}}{2}
	\]
\end{proposition}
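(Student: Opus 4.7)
The plan is to work in the upper half-space model of $\mathbb{H}^3$ and normalize so that $f(\phi)$ lies on the horizontal horosphere $H_\phi = \{x_3 = 1\}$, which is tangent to $\partial\mathbb{H}^3$ at $\infty$. Under this normalization, each neighboring horosphere $H_j$ meeting $H_\phi$ along an edge of $\phi$ is a Euclidean sphere of radius $r_j$ centered at $(\tilde{z}_j, r_j)$. The parallel surface at signed distance $t$ is a horosphere tangent at the same boundary point: $H_\phi^{(t)} = \{x_3 = e^t\}$ and $H_j^{(t)}$ is the sphere of radius $r_j e^{-t}$ centered at $(\tilde{z}_j, r_j e^{-t})$. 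Intersecting these two yields that $f_t(\phi)$ projects horizontally onto a region $R(t)\subset \mathbb{C}$ bounded by arcs of circles $C_j(t)$ centered at $\tilde{z}_j$ with Euclidean radii
\[
\rho_j(t) = \sqrt{2 r_j - e^{2t}}.
\]

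Next I would separate the two sources of area change. Since the induced metric on $\{x_3 = e^t\}$ is $e^{-2t}$ times Euclidean,
\[
\area(f_t(\phi)) = e^{-2t}\,\widetilde{\area}(R(t)),
\]
so differentiation at $t=0$ gives $-2\area(f(\phi)) + \tfrac{d}{dt}\widetilde{\area}(R(t))|_{t=0}$, using $\widetilde{\area}(R(0))=\area(f(\phi))$. It remains to identify the second term with $-\sum \ell_{ij}\tan(\alpha_{ij}/2)$. I would use the planar boundary-flux formula
\[
\frac{d}{dt}\widetilde{\area}(R(t))\Big|_{t=0} = \sum_{ij\in \phi}\int_{\text{arc}} v_n\, ds,
\]
where $v_n$ is the outward normal velocity of the arc. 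Parametrizing $C_j(t)$ by $\tilde{z}_j + \rho_j(t) e^{i\theta}$, the velocity is radial of magnitude $\dot\rho_j(0) = -1/\rho_j(0)$, so $|v_n|=1/\rho_j(0)$ and its sign depends only on whether $\phi$ lies inside or outside the disk bounded by $C_j$.

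Then I would identify $1/\rho_j(0)$ with $\tan(\alpha_{ij}/2)$. At a point $(\tilde{z}_j+\rho_j(0)e^{i\theta},1)$, both unit hyperbolic normals coincide with their Euclidean counterparts because $x_3=1$; computing
\[
\cos\alpha_{ij} = N_\phi \cdot N_j = 1 - \tfrac{1}{r_j},
\]
and combining with $\rho_j(0)^2 = 2r_j -1$ yields $\rho_j(0)^2 = \cot^2(\alpha_{ij}/2)$, i.e. $\rho_j(0) = |\cot(\alpha_{ij}/2)|$. Since arc length on $H_\phi$ at $t=0$ is both Euclidean and hyperbolic, $ds$ integrated over the arc equals $\ell_{ij}$.

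Finally I would perform the sign bookkeeping. The signed dihedral angle given in Definition \ref{def:horo} by $\sin\alpha_{ij} = \langle N_l \times N_r, T\rangle$ changes sign precisely when the label swap "left $\leftrightarrow$ right" occurs, i.e. when $\phi$ switches from the $\tilde{z}_j$-side of the arc to the opposite side. A direct check in a reference configuration (for instance $r_j$ chosen so that $\alpha_{ij}=\pi/2$) shows that the two signs cancel exactly so that each arc contributes $-\ell_{ij}\tan(\alpha_{ij}/2)$, giving the stated formula. I expect this last sign-tracking step—reconciling "inside vs.\ outside the disk $C_j$" with the signed convention of $\alpha_{ij}$—to be the main obstacle, since the absolute value calculation $\rho_j(0)=|\cot(\alpha_{ij}/2)|$ must be upgraded to the signed identity that makes the two sign ambiguities cancel.
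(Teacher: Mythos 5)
Your proof is correct and follows essentially the same route as the paper's: the same upper half-space normalization with $f(\phi)$ on $x_3=1$, the same projected circle radii ($\sqrt{2r_j-e^{2t}}$ is exactly the paper's $\sqrt{\sin^{-2}(\alpha_{ij}/2)-e^{2t}}$ since $2r_j=\sin^{-2}(\alpha_{ij}/2)$), the same split $\area(f_t(\phi))=e^{-2t}\widetilde{\area}(R(t))$, and the same identification $\rho_j(0)=|\cot(\alpha_{ij}/2)|$, $\dot\rho_j(0)=-|\tan(\alpha_{ij}/2)|$. Your use of the boundary-flux first-variation formula in place of the paper's direct estimate with $O(\Delta r^2)$ corner contributions, and your reference-configuration sign check in place of the paper's $\sign(\alpha)$ bookkeeping, are only cosmetic differences at the same level of detail as the original.
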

\begin{proof}
	Notice that for all $t$ the tangency points of the horospheres with $\partial \mathbb{H}^3$ remain unchanged. 	We denote by $\tilde{r}_t$ the Euclidean radii of the circular edges of $f_{t}(\phi)$ on the horosphere $x_3=e^{t}$ (See Figure \ref{fig:parallel}). By trigonometry, one obtains
	\[
	\tilde{r}_t = \sqrt{\frac{1}{\sin^2 \frac{\alpha}{2}} - e^{2t}}
	\]
    and 
	\[
	\frac{d}{dt}\tilde{r}_t |_{t=0}  = -|\tan \frac{\alpha_{ij}}{2}|.
	\]
	Having the centers (which are independent of $t$) and the radii $\tilde{r}_t$, the vertices of $f_t(\phi)$ can be determined explicitly in terms of $t$. One can show that the Euclidean area
	\[
	\widetilde{\area}(f_{t}(\phi))-  \widetilde{\area}(f(\phi)) = \sum  \sign(\alpha) \ell \Delta \tilde{r}  + O(\sum (\Delta r)^2) 
	\]
	where $\Delta \tilde{r}= \tilde{r}_t - \tilde{r}_0$ is the change of the Euclidean radii (See Figure \ref{fig:parallel} right). The second order term is contributed by the change in area at the corners (See Figure \ref{fig:corner}). Differentiating both sides with respect to $t$ at $t=0$ yields
	\[
		\frac{d}{dt} \widetilde{\area}(f_{t}(\phi)) |_{t=0} = \sum_{ij \in \phi} \sign(\alpha) \, \ell_{ij} \frac{d}{dt}\tilde{r}_t |_{t=0} = - \sum_{ij \in \phi} \ell_{ij} \tan \frac{\alpha_{ij}}{2}
	\]
	Thus
	\begin{align*}
\frac{d}{dt}  \Area(f_{t}(\phi))|_{t=0} &= 	\frac{d}{dt}  \left( e^{-2t} \widetilde{\area}(f_{t}(\phi)) \right) |_{t=0} \\
&=-2 \area(f(\phi)) -  \sum_{ij \in \phi} \ell_{ij} \tan \frac{\alpha_{ij}}{2}
	\end{align*}
	since $\area(f(\phi)) = \widetilde{\area}(f_{0}(\phi))$.
\end{proof}

%
%
%

\begin{definition} \label{def:discretecmc}
	A horospherical net $f$ is a discrete constant mean curvature-1 surface if it satisfies the following two properties:
	\begin{enumerate}
	\item For every face $\phi$, the ratio of the integrated mean curvature to the face area is constantly equal to 1, i.e.
	\[
	\frac{H_{\phi}}{\area(f(\phi))} = 1.
	\]
	\item For every edge
		\[
	0 \leq \ell \tan \frac{\alpha}{2} + \Arg \tilde{X} < \pi.
	\]
	where $\tilde{X}$ is the cross ratio of the hyperbolic Gauss map $\tilde{z}$.
	\end{enumerate}
\end{definition}
As we shall see in the next section, a discrete CMC-1 surface corresponds to a pair of circle patterns $z$ and $\tilde{z}$, one of which $\tilde{z}$ is the hyperbolic Gauss map and is already assumed to be Delaunay in Definition \eqref{def:horo}. Condition (2) in Definition \ref{def:discretecmc} is then equivalent to the other circle pattern $z$ being Delaunay.

\begin{figure}
	\begin{minipage}{.55\textwidth}
\begin{tikzpicture}[line cap=round,line join=round,>=triangle 45,x=1.0cm,y=1.0cm]
\clip(-1.5990187908523654,0.574431364951998) rectangle (3.736087504308259,5.022810241010297);
\draw [shift={(2.7320508075688763,2.)},line width=.5pt,fill=black,fill opacity=0.10000000149011612] (0,0) -- (0.:0.3353930405446931) arc (0.:60.:0.3353930405446931) -- cycle;
\draw [line width=.5pt,dash pattern=on 3pt off 3pt,domain=-3.5990187908523654:5.736087504308259] plot(\x,{(--8.-0.*\x)/8.});
\draw [line width=0.5pt,domain=-3.5990187908523654:5.736087504308259] plot(\x,{(--16.-0.*\x)/8.});
\draw [line width=0.5pt] (1.,3.) circle (2.cm);
\draw [line width=.5pt,dash pattern=on 3pt off 3pt] (2.732050807568877,2.)-- (3.317050807568877,3.013249722427793);
\draw [line width=.5pt,dash pattern=on 3pt off 3pt] (1.,3.)-- (1.,1.);
\draw [line width=0.5pt,dash pattern=on 3pt off 3pt] (1.,3.)-- (2.732050807568877,2.);
\draw (1.6890114817356288,1.8173637296932556) node[anchor=north west] {$\tilde{r}$};
\draw (3.0976622520233397,2.531633058528451) node[anchor=north west] {$\alpha$};
		\draw (-0.4783922768040664,2.486296074292418) node[anchor=north west] {$x_3=1$};
\draw [->,line width=0.5pt] (1.,1.9025874104060527) -- (2.752237768781757,1.9025874104060527);
\draw [->,line width=0.5pt] (2.752237768781757,1.9025874104060527) -- (1.,1.9025874104060527);
\end{tikzpicture}
\end{minipage}
\begin{minipage}{.4\textwidth}
\begin{tikzpicture}[line cap=round,line join=round,>=triangle 45,x=1.0cm,y=1.0cm,scale=0.8]
\clip(-1.6252784909831166,-1.212826542490937) rectangle (4.460476880423979,4.107206680250292);
\draw [shift={(0.,4.)},line width=.5pt,fill=black,fill opacity=0.10000000149011612] (0,0) -- (-93.61388075200365:0.42857355369347044) arc (-93.61388075200365:-64.91570791629776:0.42857355369347044) -- cycle;
\draw [shift={(0.,4.)},line width=.5pt]  plot[domain=4.649314862487778:5.150192467777242,variable=\t]({1.*3.807571404451924*cos(\t r)+0.*3.807571404451924*sin(\t r)},{0.*3.807571404451924*cos(\t r)+1.*3.807571404451924*sin(\t r)});
\draw [shift={(3.,4.)},line width=.5pt]  plot[domain=4.331849746950265:4.778344288593576,variable=\t]({1.*3.7219210282432544*cos(\t r)+0.*3.7219210282432544*sin(\t r)},{0.*3.7219210282432544*cos(\t r)+1.*3.7219210282432544*sin(\t r)});
\draw [shift={(4.36,1.38)},line width=.5pt]  plot[domain=2.2835950364618505:3.923965332279361,variable=\t]({1.*1.5597435686676198*cos(\t r)+0.*1.5597435686676198*sin(\t r)},{0.*1.5597435686676198*cos(\t r)+1.*1.5597435686676198*sin(\t r)});
\draw [shift={(3.,-1.)},line width=.5pt]  plot[domain=1.4755795091115858:1.9742153600947085,variable=\t]({1.*3.5761990996028175*cos(\t r)+0.*3.5761990996028175*sin(\t r)},{0.*3.5761990996028175*cos(\t r)+1.*3.5761990996028175*sin(\t r)});
\draw [shift={(0.,-1.)},line width=.5pt]  plot[domain=1.1169523252733864:1.6475682180646747,variable=\t]({1.*3.649438312946254*cos(\t r)+0.*3.649438312946254*sin(\t r)},{0.*3.649438312946254*cos(\t r)+1.*3.649438312946254*sin(\t r)});
\draw [shift={(-1.5031088116251143,1.4037211435566217)},line width=.5pt]  plot[domain=-0.7613283248537828:0.7907531219112229,variable=\t]({1.*1.744817601197979*cos(\t r)+0.*1.744817601197979*sin(\t r)},{0.*1.744817601197979*cos(\t r)+1.*1.744817601197979*sin(\t r)});
\draw [line width=0.3pt,dash pattern=on 3pt off 3pt] (0.,4.)-- (-0.24,0.2);
\draw [line width=0.3pt,dash pattern=on 3pt off 3pt] (0.,4.)-- (1.617599690582241,0.5443284986849509);
\draw [line width=0.3pt,dash pattern=on 3pt off 3pt] (3.34,2.56)-- (4.36,1.38);
\draw [line width=0.3pt,dash pattern=on 3pt off 3pt] (4.36,1.38)-- (3.246179194617429,0.272898581345754);
\draw [line width=0.3pt,dash pattern=on 3pt off 3pt] (1.617599690582241,0.5443284986849509)-- (3.,4.);
\draw [line width=0.3pt,dash pattern=on 3pt off 3pt] (3.,4.)-- (3.246179194617429,0.272898581345754);
\draw (0.7318810129490687,0.3586137879462571) node[anchor=north west] {$\ell$};
\draw [->,line width=0.3pt] (0.11759225265509433,4.04434585384828) -- (1.7176001864440509,0.6300432094236448);
\draw [->,line width=0.3pt] (1.7176001864440509,0.6300432094236447) -- (0.11759225265509432,4.04434585384828);
\draw (-0.10955138369917818,3.6586296555241575) node[anchor=north west] {$\theta$};
\draw (1.114740351765655,2.2729084985819417) node[anchor=north west] {$\tilde{r}$};
\begin{scriptsize}
\draw [fill=black] (0.,4.) ++(-2.0pt,0 pt) -- ++(2.0pt,2.0pt)--++(2.0pt,-2.0pt)--++(-2.0pt,-2.0pt)--++(-2.0pt,2.0pt);
\draw [fill=black] (3.,4.) ++(-2.0pt,0 pt) -- ++(2.0pt,2.0pt)--++(2.0pt,-2.0pt)--++(-2.0pt,-2.0pt)--++(-2.0pt,2.0pt);
\draw [fill=black] (-1.5031088116251143,1.4037211435566217) ++(-2.0pt,0 pt) -- ++(2.0pt,2.0pt)--++(2.0pt,-2.0pt)--++(-2.0pt,-2.0pt)--++(-2.0pt,2.0pt);
\draw [fill=black] (0.,-1.) ++(-2.0pt,0 pt) -- ++(2.0pt,2.0pt)--++(2.0pt,-2.0pt)--++(-2.0pt,-2.0pt)--++(-2.0pt,2.0pt);
\draw [fill=black] (3.,-1.) ++(-2.0pt,0 pt) -- ++(2.0pt,2.0pt)--++(2.0pt,-2.0pt)--++(-2.0pt,-2.0pt)--++(-2.0pt,2.0pt);
\draw [fill=black] (4.36,1.38) ++(-2.0pt,0 pt) -- ++(2.0pt,2.0pt)--++(2.0pt,-2.0pt)--++(-2.0pt,-2.0pt)--++(-2.0pt,2.0pt);
\end{scriptsize}
\end{tikzpicture}
\end{minipage}

	\caption{On the left, the solid line and the circle indicates a vertical cross section of two horospheres intersect at angle $|\alpha|$ in the upper half space model, where one of the horospheres is the horizontal plane $x_3=1$. 
		The right figure shows the horospherical face on the horizontal plane $x_3=1$. The rhombi vertices denote the projection of the tangency points of the neighboring horospheres with $\partial \mathbb{H}^3$. Each circular edge is generated by a rotation centered at a rhombus vertex with angle $\theta$ and radius $\tilde{r}$. The rotation is clockwise if $\theta>0$ while counterclockwise if $\theta<0$. \medskip} \label{fig:horosphere}
		\begin{minipage}{.55\textwidth}
	\begin{tikzpicture}[line cap=round,line join=round,>=triangle 45,x=1.0cm,y=1.0cm]
		\clip(-1.5990187908523654,0.464431364951998) rectangle (5.106087504308259,5.022810241010297);
	\draw[line width=.5pt,fill=black,fill opacity=0.10000000149011612] (3.237158693331927,1.) -- (3.237158693331927,1.2371586933319272) -- (3.,1.2371586933319272) -- (3.,1.) -- cycle; 
	\draw [shift={(1.,1.)},line width=.5pt,fill=black,fill opacity=0.10000000149011612] (0,0) -- (30.:0.3353930405446931) arc (30.:90.:0.3353930405446931) -- cycle;
	\draw [line width=.5pt,dash pattern=on 3pt off 3pt,domain=-3.7219962390520864:5.613110056108538] plot(\x,{(--8.-0.*\x)/8.});
	\draw [line width=.5pt,domain=-3.7219962390520864:5.613110056108538] plot(\x,{(--16.-0.*\x)/8.});
	\draw [line width=.5pt] (1.,3.) circle (2.cm);
	\draw [line width=.5pt,dash pattern=on 3pt off 3pt] (1.,3.)-- (1.,1.);
	\draw (1.4654161213725,2.509891434655547) node[anchor=north west] {$\tilde{r}_t$};
	\draw (3.2206397002230607,2.438655610328731) node[anchor=north west] {$x_3=1$};
	\draw [shift={(1.,1.)},line width=.5pt,dotted]  plot[domain=0.:3.141592653589793,variable=\t]({1.*2.*cos(\t r)+0.*2.*sin(\t r)},{0.*2.*cos(\t r)+1.*2.*sin(\t r)});
	\draw [line width=.5pt,dash pattern=on 3pt off 3pt] (1.,1.)-- (2.732050807568877,2.);
	\draw (0.8617086483920525,1.0782632161681163) node[anchor=north west] {$\frac{\pi-\alpha}{2}$};
	\draw [line width=.5pt,dash pattern=on 3pt off 3pt] (1.,2.252138858074541) circle (1.2521388580745412cm);
	\draw [line width=.5pt,dash pattern=on 3pt off 3pt,domain=-3.7219962390520864:5.613110056108538] plot(\x,{(--6.252325188263848-0.*\x)/2.4072709263645913});
	\draw (3.231819468241217,3.1570821553818037) node[anchor=north west] {$x_3=e^{t}$};
	\draw [->,line width=.5pt] (1.,2.47573421843767) -- (2.203280810570825,2.4869139864558263);
	\draw [->,line width=.5pt] (2.203280810570825,2.4869139864558263) -- (1.,2.47573421843767);
	\end{tikzpicture}
\end{minipage}
		\begin{minipage}{.4\textwidth}
	\begin{tikzpicture}[line cap=round,line join=round,>=triangle 45,x=1.0cm,y=1.0cm,scale=0.8]
\clip(-1.6252784909831166,-1.212826542490937) rectangle (4.460476880423979,4.107206680250292);
	\draw [shift={(0.,4.)},line width=.5pt]  plot[domain=4.649314862487778:5.150192467777242,variable=\t]({1.*3.807571404451924*cos(\t r)+0.*3.807571404451924*sin(\t r)},{0.*3.807571404451924*cos(\t r)+1.*3.807571404451924*sin(\t r)});
	\draw [shift={(3.,4.)},line width=.5pt]  plot[domain=4.331849746950265:4.778344288593576,variable=\t]({1.*3.7219210282432544*cos(\t r)+0.*3.7219210282432544*sin(\t r)},{0.*3.7219210282432544*cos(\t r)+1.*3.7219210282432544*sin(\t r)});
	\draw [shift={(4.36,1.38)},line width=.5pt]  plot[domain=2.2835950364618505:3.923965332279361,variable=\t]({1.*1.5597435686676198*cos(\t r)+0.*1.5597435686676198*sin(\t r)},{0.*1.5597435686676198*cos(\t r)+1.*1.5597435686676198*sin(\t r)});
	\draw [shift={(3.,-1.)},line width=.5pt]  plot[domain=1.4755795091115858:1.9742153600947085,variable=\t]({1.*3.5761990996028175*cos(\t r)+0.*3.5761990996028175*sin(\t r)},{0.*3.5761990996028175*cos(\t r)+1.*3.5761990996028175*sin(\t r)});
	\draw [shift={(0.,-1.)},line width=.5pt]  plot[domain=1.1169523252733864:1.6475682180646747,variable=\t]({1.*3.649438312946254*cos(\t r)+0.*3.649438312946254*sin(\t r)},{0.*3.649438312946254*cos(\t r)+1.*3.649438312946254*sin(\t r)});
	\draw [shift={(-1.5031088116251143,1.4037211435566217)},line width=.5pt]  plot[domain=-0.7613283248537828:0.7907531219112229,variable=\t]({1.*1.744817601197979*cos(\t r)+0.*1.744817601197979*sin(\t r)},{0.*1.744817601197979*cos(\t r)+1.*1.744817601197979*sin(\t r)});
	\draw [shift={(-1.5031088116251143,1.4037211435566217)},line width=.5pt,dash pattern=on 3pt off 3pt]  plot[domain=-0.6370474265111508:0.6994297415592061,variable=\t]({1.*1.5300781427268437*cos(\t r)+0.*1.5300781427268437*sin(\t r)},{0.*1.5300781427268437*cos(\t r)+1.*1.5300781427268437*sin(\t r)});
	\draw [shift={(0.,4.)},line width=.5pt,dash pattern=on 3pt off 3pt]  plot[domain=4.634646568354957:5.180672468246205,variable=\t]({1.*3.5170298740520005*cos(\t r)+0.*3.5170298740520005*sin(\t r)},{0.*3.5170298740520005*cos(\t r)+1.*3.5170298740520005*sin(\t r)});
	\draw [shift={(3.,4.)},line width=.5pt,dash pattern=on 3pt off 3pt]  plot[domain=4.292946760114922:4.806087259461275,variable=\t]({1.*3.4513937392926173*cos(\t r)+0.*3.4513937392926173*sin(\t r)},{0.*3.4513937392926173*cos(\t r)+1.*3.4513937392926173*sin(\t r)});
	\draw [shift={(4.36,1.38)},line width=.5pt,dash pattern=on 3pt off 3pt]  plot[domain=2.380769200913346:3.805207411421898,variable=\t]({1.*1.3139965436721635*cos(\t r)+0.*1.3139965436721635*sin(\t r)},{0.*1.3139965436721635*cos(\t r)+1.*1.3139965436721635*sin(\t r)});
	\draw [shift={(3.,-1.)},line width=.5pt,dash pattern=on 3pt off 3pt]  plot[domain=1.4471723083194692:2.0109861200044277,variable=\t]({1.*3.311295258203884*cos(\t r)+0.*3.311295258203884*sin(\t r)},{0.*3.311295258203884*cos(\t r)+1.*3.311295258203884*sin(\t r)});
	\draw [shift={(0.,-1.)},line width=.5pt,dash pattern=on 3pt off 3pt]  plot[domain=1.0860395754373826:1.6668097799409811,variable=\t]({1.*3.398511387500966*cos(\t r)+0.*3.398511387500966*sin(\t r)},{0.*3.398511387500966*cos(\t r)+1.*3.398511387500966*sin(\t r)});
	\draw [->,line width=.5pt] (0.,4.) -- (1.5944161474823686,0.8477860514917506);
	\draw [->,line width=.5pt] (1.5944161474823686,0.8477860514917507) -- (0.,4.);
	\draw (0.6713676722096803,3.1661403614789596) node[anchor=north west] {$\tilde{r}_t$};
	\begin{scriptsize}
	\draw [fill=black] (0.,4.) ++(-2.0pt,0 pt) -- ++(2.0pt,2.0pt)--++(2.0pt,-2.0pt)--++(-2.0pt,-2.0pt)--++(-2.0pt,2.0pt);
	\draw [fill=black] (3.,4.) ++(-2.0pt,0 pt) -- ++(2.0pt,2.0pt)--++(2.0pt,-2.0pt)--++(-2.0pt,-2.0pt)--++(-2.0pt,2.0pt);
	\draw [fill=black] (-1.5031088116251143,1.4037211435566217) ++(-2.0pt,0 pt) -- ++(2.0pt,2.0pt)--++(2.0pt,-2.0pt)--++(-2.0pt,-2.0pt)--++(-2.0pt,2.0pt);
	\draw [fill=black] (0.,-1.) ++(-2.0pt,0 pt) -- ++(2.0pt,2.0pt)--++(2.0pt,-2.0pt)--++(-2.0pt,-2.0pt)--++(-2.0pt,2.0pt);
	\draw [fill=black] (3.,-1.) ++(-2.0pt,0 pt) -- ++(2.0pt,2.0pt)--++(2.0pt,-2.0pt)--++(-2.0pt,-2.0pt)--++(-2.0pt,2.0pt);
	\draw [fill=black] (4.36,1.38) ++(-2.0pt,0 pt) -- ++(2.0pt,2.0pt)--++(2.0pt,-2.0pt)--++(-2.0pt,-2.0pt)--++(-2.0pt,2.0pt);
	\end{scriptsize}
	\end{tikzpicture}
\end{minipage}
\caption{The left figure shows the parallel surfaces of two horospheres at hyperbolic distance $t$. By varying $t>0$, the intersection point of the parallel surfaces lie on a totally geodesic plane (the dotted half circle orthogonal to $x_3=1$). The right figure indicates a deformation of the face under parallel surfaces.}
\label{fig:parallel}
\end{figure}

\begin{figure}
	\centering
	\includegraphics[width=0.4\textwidth]{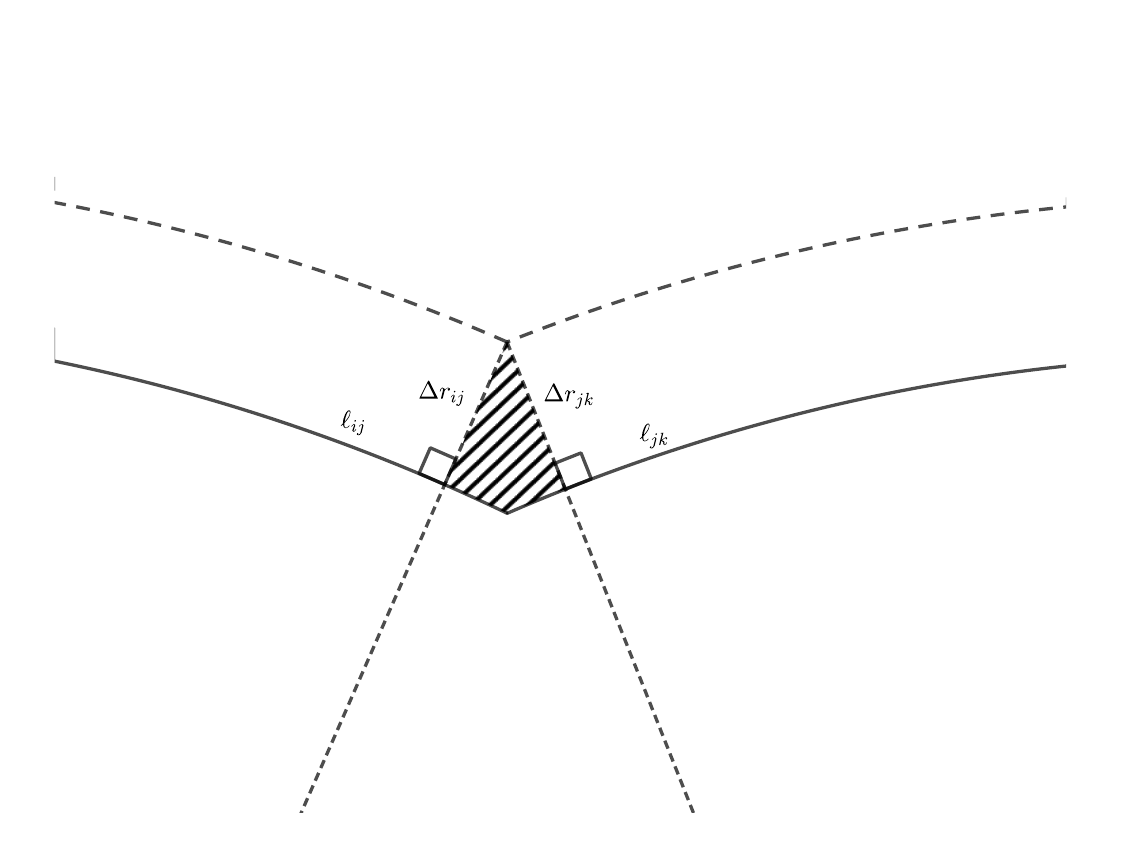}
	\caption{The area of the shaded region is bounded above by $C (\Delta r_{ij}^2+ \Delta r_{jk}^2)$ for some constant $C>0$ independent of $t$.}
	\label{fig:corner}
\end{figure}

\subsection{Weierstrass representation} \label{sec:weier}

We prove a Weierstrass-type representation that every discrete CMC-1 surface corresponds to a pair of Delaunay circle patterns sharing the modulus of cross ratios. It makes use of the discrete osculating M\"{o}bius transformation.  

We first describe horospheres in terms of Hermitian matrices. 
\begin{lemma}\label{lem:horosphere}
	For any $r>0$, the Hermitian matrix
	\[
	N_{z,r}:= \frac{2r}{1+|z|^2} \left( \begin{array}{cc}
	|z|^2 & z \\ \bar{z} & 1
	\end{array}\right) 
	\]
	represents a point in the upper light cone $L^{+}$. Its corresponding horosphere touches the boundary $\partial\mathbb{H}^{3}$ at $z$ in the upper half space model. Furthermore, if $(z,1)^T$ is an eigenvector of $A \in SL(2,\mathbb{C})$ with eigenvalue $\lambda$, then
	\begin{equation}\label{eq:horon}
	A N_{z,r} A^* = |\lambda|^2 N_{z,r}  
	\end{equation}
\end{lemma}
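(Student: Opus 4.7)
The plan is to verify three claims in turn: that $N_{z,r}$ lies in $L^{+}$; that the corresponding horosphere is tangent to $\partial\mathbb{H}^3$ at $z$; and that $A N_{z,r} A^* = |\lambda|^2 N_{z,r}$ whenever $(z,1)^T$ is an eigenvector of $A$ with eigenvalue $\lambda$. The crucial observation is that the Hermitian matrix $N_{z,r}$ admits the rank-one factorization
\[
N_{z,r} = \frac{2r}{1+|z|^2}\binom{z}{1}\bigl(\bar z \;\; 1\bigr),
\]
so if we set $v := \sqrt{\tfrac{2r}{1+|z|^2}}\,(z,1)^T$, then $N_{z,r} = v v^*$. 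This factorization will do most of the work.

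First, for membership in $L^{+}$, direct computation using the factorization gives $\det(N_{z,r}) = 0$ and $\trace(N_{z,r}) = \tfrac{2r}{1+|z|^2}(|z|^2+1) = 2r > 0$, so $N_{z,r}$ is a null Hermitian matrix with positive trace, hence in the upper light cone.

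Second, to identify the boundary point, I would use the standard identification of the sphere at infinity $\partial\mathbb{H}^3$ with $\mathbb{CP}^1 \cong \mathbb{C}\cup\{\infty\}$: every null ray in $L^{+}$ can be written as $\{t\,v v^* : t>0\}$ for some $v = (v_1,v_2)^T \in \mathbb{C}^2\setminus\{0\}$, and the corresponding boundary point in the upper half space model is $v_1/v_2$. Applied to $v = (z,1)^T$, this gives the point $z$, which is exactly the tangency point of $H_{N_{z,r}}$ with $\partial\mathbb{H}^3$. (The parameter $r$ then plays the role of the size of the horosphere, as can be checked by computing $\langle x, N_{z,r}\rangle = 1$ for the apex of the horosphere, but the precise geometric normalization is not needed for the lemma.)

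Third, suppose $Av = \lambda v$ with $v = (z,1)^T$. Then $(Av)^* = \bar\lambda v^*$, so
\[
A N_{z,r} A^* = \frac{2r}{1+|z|^2}\,(Av)(Av)^* = \frac{2r}{1+|z|^2}\,\lambda\bar\lambda\,v v^* = |\lambda|^2\, N_{z,r}.
\]
No part of this looks hard; the only mild subtlety is the identification of the tangency point with $z$, but once the factorization $N_{z,r}=vv^*$ is in hand, this is just the standard correspondence between null rays in $\Herm(2)$ and points of $\mathbb{CP}^1$, and the eigenvector statement is automatic because $A$ acts on rank-one Hermitian matrices by $v v^* \mapsto (Av)(Av)^*$.
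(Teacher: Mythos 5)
Your proposal is correct and follows essentially the same route as the paper: the rank-one factorization $N_{z,r}=vv^*$ with $v$ proportional to $(z,1)^T$ is just a compact form of the paper's observation that the columns of $N_{z,r}$ are multiples of $(z,1)^T$ and the rows multiples of $(\bar z,1)$, and both arguments identify the tangency point via the standard correspondence between null rays in $\Herm(2)$ and points of $\mathbb{C}\cup\{\infty\}$ (the paper spells this out as a stereographic projection). The eigenvector computation $A(vv^*)A^*=|\lambda|^2 vv^*$ is the same as the paper's.
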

\begin{proof}
	It is obvious that $\det N_{z,r}=0$ and $\trace	N_{z,r}>0$. Thus it represents an element in $L^{+}$. The extended complex plane is identified with the light cone as follows
	\begin{align*}
	\mathbb{C}\cup \{\infty\} &\to L^+ \subset \mathbb{R}^{3,1}\to \Herm(2) \\
	z &\mapsto (1, \frac{2 \Re z}{1 + |z|^2},\frac{2 \Im z}{1 + |z|^2}, -\frac{1-|z|^2}{1 + |z|^2}) \mapsto \frac{2}{1+|z|^2} \left( \begin{array}{cc}
	|z|^2 & z \\ \bar{z} & 1
	\end{array}\right) 
	\end{align*}	
	which is a composition of a stereographic projection and the identification between $\mathbb{R}^{3,1}$ and hermitian matrices. Hence $N_{z,r}$ represents a horosphere touching $\partial H^{3}$ at $z$. Equation \eqref{eq:horon} follows from the observation that the columns of $N_{z,r}$ are multiples of $(z,1)^T$ while the rows are multiplies of $(\bar{z},1)$.
\end{proof}

Recall that each dual vertex corresponds to a primal face and we have a bijection $V^{*} \cong F$. We then prove the main result.
%
%

\begin{proof}[Proof of Theorem \ref{thm:horo}]
	Suppose $A:F \to SL(2,\mathbb{C})$ is the osculating M\"{o}bius transformation mapping $z$ to $\tilde{z}$. Let $\{ijk\},\{jil\}$ be two neighbouring faces. We have
	\[
	f_{jil} = A_{jil} A^*_{jil} = ( A_{jil}A_{ijk}^{-1}) f_{ijk}(A_{jil}A_{ijk}^{-1})^* 
	\]
	Then we have for any $r>0$,
	\begin{align*}
	\langle f_{jil}, \, N_{\tilde{z}_i,r} \rangle &= \langle A_{jil} A^*_{jil}, \, N_{\tilde{z}_i,r} \rangle  \\&= \langle A_{ijk} A^*_{ijk}, \, ( A_{ijk}A_{jil}^{-1}) N_{\tilde{z}_i,r} (A_{ijk}A_{jil}^{-1})^* \rangle\\ &= \langle  A_{ijk} A^*_{ijk}, \,   N_{\tilde{z}_i,r} \rangle = \langle f_{ijk}, \, N_{\tilde{z}_i,r} \rangle
	\end{align*}
	which follows from Lemma \ref{lem:horosphere} and the property that $(\tilde{z}_i,1)^T$ is an eigenvector of $A_{ijk}A_{jil}^{-1}$ with eigenvalue $|X/\tilde{X}|=1$. Hence all the vertices within a dual face $i$ lie on a common horosphere touching $\partial \mathbb{H}^3$ at $\tilde{z}_i$. 
	
	We pick a dual face $i$ and consider the upper half space model normalized in such a way that the horosphere at $\tilde{z}_i$ becomes the horizontal plane $x_3=1$ and $\tilde{z}_i$ becomes infinity. The transition matrix $A_{jil}A_{ijk}^{-1}$ across edge $\{ij\}$ hence becomes a rotation around a vertical axis through  $\tilde{z}_j$. The edge $\{ij\}$ under $f$ is a circular arc on the horizontal plane generated by rotation with radius $r_{ij} \geq 0$ (See Figure \ref{fig:horosphere}). We denote the rotation angle by $\theta_{ij}$ which is related to the eigenvalue $\lambda_{ij}$ of $A_{jil}A_{ijk}^{-1}$ at $\tilde{z}_i$ via
	\begin{equation}\label{eq:theta}
			\theta_{ij} =  \log \lambda^2_{ij}= \Arg X_{ij} - \Arg \tilde{X}_{ij}
	\end{equation}
    and hence satisfies 
    \[
    -\pi <  -\Arg \tilde{X}_{ij} \leq \theta_{ij} \leq \Arg X_{ij} < \pi.
    \]
    Thus the edge is the shorter circular arc in the intersection of the neighboring horospheres. It implies $f$ is a horospherical net.
		
	By trigonometry, one can show that $r_{ij}= |\cot \frac{\alpha_{ij}}{2}|$ and the hyperbolic length of the circular arc is
		\[
	\ell_{ij} =  |\theta_{ij}| r_{ij} =  |\theta_{ij}|  |\cot \frac{\alpha_{ij}}{2}| .
	\] 
    The sign of $\alpha$ is indeed defined such that
    	\begin{equation} \label{eq:ell}
   \ell_{ij} = \theta_{ij}  \cot \frac{\alpha_{ij}}{2} .
    \end{equation}
    Combining with Equation \eqref{eq:theta}, it yields
    \[
    0\leq \Arg X_{ij} = \ell_{ij}  \tan \frac{\alpha_{ij}}{2} + \Arg \tilde{X}_{ij} < \pi
    \]
    as required in Definition \ref{def:discretecmc}. 
    
    It remains to check the integrated mean curvature of $f$. Notice that Equation \eqref{eq:theta} and \eqref{eq:ell} yield
	\[	
	\sum_j \ell_{ij} \tan \frac{\alpha_{ij}}{2} = \sum_j    \theta_{ij}  = 2\pi - 2\pi =0.
	\]
	Hence the integrated mean curvature $H: F^* \to \mathbb{R}$ satisfies for every face $\phi \in F^*$
	\[
	\frac{H_{\phi}}{\area(f(\phi))} = \frac{\area(f(\phi)) + 0}{\area(f(\phi))} = 1
	\]
	and $f$ is a discrete CMC-1 surface.
	
	Conversely, suppose $f:V^{*} \to \mathbb{H}^3$ is a discrete CMC-1 surface with hyperbolic Gauss map $\tilde{z}$. Motivated from Equation \eqref{eq:transit}, we define a function $\eta:\vec{E} \to SL(2,\mathbb{C})$ on oriented edges via
	\begin{equation} \label{eq:eta}
		\eta_{ij}:=\frac{1}{\tilde{z}_j-\tilde{z}_i}\left(
	\begin{array}{cc}
	\lambda_{ij} \tilde{z}_j  - \frac{\tilde{z}_i}{\lambda_{ij} } & -\tilde{z}_i \tilde{z}_j (\lambda_{ij}- \frac{1}{\lambda_{ij}}) \\ \lambda_{ij} -\frac{1}{\lambda_{ij}}  & \frac{ \tilde{z}_j }{\lambda_{ij}}-  \lambda_{ij} \tilde{z}_i 
	\end{array}
	\right)
	\end{equation}
	where $\lambda_{ij}= e^{\mathbf{i} \frac{\ell_{ij}}{2} \tan \frac{\alpha_{ij}}{2}}$. We use the Hermitian matrix model of the hyperbolic space. Particularly, $\eta$ is defined in such a way that
	\[
	f_{jil} = \eta_{ij} f_{ijk} \eta_{ij}^*
	\]
	since both $f_{jil},f_{ijk}$ lie on the intersection of two horospheres which respectively touch $\partial \mathbb{H}^{3}$ at $\tilde{z}_i,\tilde{z}_j$. Indeed $\eta$ is a \emph{multiplicative 1-form} satisfying $\eta_{ji} = \eta_{ij}^{-1}$.
	
	We claim that there exists $A:F \to SL(2,\mathbb{C})$ such that
	\begin{align}\label{eq:etaexact}
			\eta_{ij}=A_{jil}A_{ijk}^{-1}.
	\end{align}
	To see this, consider a primal vertex $i$ and label its neighboring primal vertices as $v_0,v_1,v_2,\dots,v_s=v_0$. We denote by
	\[
		\prod_{j=1}^s \eta_{ij}:= \eta_{is} \cdots \eta_{i2} \eta_{i1}
	\]
	Then we have 
	\begin{gather*}
	(\prod_{j=1}^s \eta_{ij}) f_{i01} (\prod_{j=1}^s \eta_{ij})^* = f_{i01} \\
	(\prod_{j=1}^s \eta_{ij}) \left( \begin{array}{c}
	\tilde{z}_i \\ 1 \end{array} \right)  = (\prod_{j=1}^s \lambda_{ij}) \left( \begin{array}{c}
	\tilde{z}_i \\ 1 \end{array} \right) = \left( \begin{array}{c}
	\tilde{z}_i \\ 1 \end{array} \right) 
	\end{gather*}
	where the second equation holds because $f$ has constant integrated mean curvature-1. The first equation implies $\prod_{j=1}^s \eta_{ij}$ is conjugate to an element in $SU(2)$ and the second implies it has eigenvalues $1$. Thus $\prod_{j=1}^s \eta_{ij}$ is the identity and $\eta$ is a closed 1-form. Since the surface is assumed to be simply connected, by integration we obtain a mapping $A:F \to SL(2,\mathbb{C})$ satisfying Equation \eqref{eq:etaexact}. It is unique up to multiplication from the right by a constant matrix in $SL(2,C)$. Such constant matrix is determined so that the matrix $A$ furthermore satisfies
	\[
	f = A A^{*}
	\]
	
	Proposition \ref{prop:transit} implies $A^{-1}$ is indeed an osculating M\"{o}bius transformation by construction. There is another circle pattern $z:V \to \mathbb{C}$ and $A$ is the osculating M\"{o}bius transformation from $z$ to $\tilde{z}$. The cross ratios $X, \tilde{X}$ satisfy 
	\[
	\frac{|X|}{|\tilde{X}|} = |\lambda^2| = 1 
	\] 
	and hence the two circle patterns share the same modulus of cross ratios. On the other hand, the second condition in Definition \ref{def:discretecmc} implies that
	\[
	    0\leq \Arg X_{ij} = \ell_{ij}  \tan \frac{\alpha_{ij}}{2} + \Arg \tilde{X}_{ij} < \pi
	\] 
	and thus $z$ is Delaunay.
\end{proof}

The Weierstrass-type representation enables us to deduce some properties of discrete CMC-1 surfaces as in their smooth counterparts. For example, it is known that every smooth CMC-1 surface $f$ admits a dual CMC-1 surface $\tilde{f}$ such that their Hopf differentials are related by $\tilde{Q}=-Q$.

\begin{corollary}\label{cor:dualdisc}
	Suppose $f:V^{*} \to \mathbb{H}^3$ is a discrete CMC-1 surface with edge lengths $\ell$ and dihedral angle $\alpha$. Then there is a dual discrete CMC-1 surface $\tilde{f}:V^{*} \to \mathbb{H}^3$ such that
	\begin{equation}\label{eq:dual}
			\tilde{\ell} \tan \frac{\tilde{\alpha}}{2} = -\ell \tan \frac{\alpha}{2}.
	\end{equation}
\end{corollary}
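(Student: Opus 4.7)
The plan is to exploit the symmetry in the Weierstrass-type representation (Theorem \ref{thm:horo}) by swapping the roles of the two circle patterns. Concretely, by the converse direction of Theorem \ref{thm:horo}, the discrete CMC-1 surface $f$ comes from a pair of Delaunay circle patterns $z,\tilde{z}:V\to\mathbb{C}$ sharing the same shear coordinates $\Re\log X = \Re\log \tilde{X}$, with $f = A A^{*}$ where $A:F\to SL(2,\mathbb{C})$ is the osculating M\"{o}bius transformation from $z$ to $\tilde{z}$, and with hyperbolic Gauss map $\tilde{z}$.

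I would then define the \emph{dual} surface by reversing the two circle patterns: let $\tilde{A}:F\to SL(2,\mathbb{C})$ be the osculating M\"{o}bius transformation from $\tilde{z}$ to $z$, and set $\tilde{f}:= \tilde{A} \tilde{A}^{*}$. Since the shear-coordinate condition $\Re\log X=\Re\log\tilde{X}$ is symmetric in $X,\tilde{X}$, and since both $z$ and $\tilde{z}$ are already Delaunay by hypothesis, the forward direction of Theorem \ref{thm:horo} applies verbatim to the pair $(\tilde{z},z)$ and yields that $\tilde{f}$ is a discrete CMC-1 surface whose hyperbolic Gauss map is $z$.

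The remaining task is to verify the relation \eqref{eq:dual}. Here I would read off from the proof of Theorem \ref{thm:horo} the identity
\[
\ell_{ij}\tan\tfrac{\alpha_{ij}}{2} \;=\; \theta_{ij} \;=\; \Arg X_{ij}-\Arg\tilde{X}_{ij},
\]
obtained by combining the rotation-angle formula $\theta_{ij}=\log\lambda_{ij}^{2}=\Arg X_{ij}-\Arg\tilde{X}_{ij}$ with the trigonometric relation $\ell_{ij}=\theta_{ij}\cot(\alpha_{ij}/2)$ on the normalized horosphere. Applying exactly the same derivation to $\tilde{f}$, with the roles of $X$ and $\tilde{X}$ interchanged, gives
\[
\tilde{\ell}_{ij}\tan\tfrac{\tilde{\alpha}_{ij}}{2}
\;=\; \Arg\tilde{X}_{ij}-\Arg X_{ij}
\;=\; -\,\ell_{ij}\tan\tfrac{\alpha_{ij}}{2},
\]
which is the dual relation claimed in the corollary.

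The only real point that requires care, and the place where I expect the main (minor) obstacle, is checking that the swapped pair $(\tilde{z},z)$ still satisfies the Delaunay condition (2) of Definition \ref{def:discretecmc} for $\tilde{f}$, so that $\tilde{f}$ genuinely qualifies as a discrete CMC-1 surface rather than merely a horospherical net; however this is immediate because the Delaunay hypothesis on both $z$ and $\tilde{z}$ gives $0\le\Arg\tilde{X}_{ij}<\pi$, which is exactly the required bound $0\le\tilde{\ell}_{ij}\tan(\tilde{\alpha}_{ij}/2)+\Arg X_{ij}<\pi$ after substituting the dual edge-length formula derived above.
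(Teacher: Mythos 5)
Your proposal is correct and follows essentially the same route as the paper: the paper's proof also invokes Theorem \ref{thm:horo} to write $f=AA^{*}$ and takes the dual surface to be $\tilde{f}=A^{-1}(A^{-1})^{*}$, i.e.\ the osculating M\"{o}bius transformation from $\tilde{z}$ to $z$, exactly your swap of the two circle patterns. Your explicit verification of \eqref{eq:dual} via $\ell\tan\frac{\alpha}{2}=\Arg X-\Arg\tilde{X}$ and of the Delaunay condition for the dual surface just spells out details the paper leaves implicit.
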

\begin{proof}
	By Theorem \ref{thm:horo}, we know $f=AA^*$ for some osculating M\"{o}bius transformation $A$ from some circle pattern $z$ to $\tilde{z}$. Then $A^{-1}$ is the osculating M\"{o}bius transformation  from $\tilde{z}$ to $z$. It defines another discrete CMC-1 surface $\tilde{f}:= A^{-1} (A^{-1})^*$ which satisfies Equation \eqref{eq:dual}.
\end{proof}

\begin{remark}
	Theorem \ref{thm:horo} also holds for general circle patterns sharing the same modulus of cross ratios (not necessary Delaunay) if we broaden the definition of discrete CMC-1 surfaces as follows:
	\begin{enumerate}[(i)]
		\item In Definition \ref{def:horo} condition (2), edges are allowed to be realized as the longer circular arcs in the intersection of neighboring horospheres 
		\item In Definition \ref{def:horo} condition (3), the hyperbolic Gauss map is allowed to be non-Delaunay.
		\item Condition (2) of Definition \ref{def:discretecmc} is removed.
	\end{enumerate} 
\end{remark}

\section{A 1-parameter family of cross ratio systems} \label{sec:toda}

Hertrich-Jeromin \cite{Hertrich-Jeromin2000} introduced discrete CMC-1 surfaces in the context of integrable systems. These surfaces are constructed from special meshes in the plane that admit a real-valued solution to a discrete Toda-type equation \cite{Bobenko2002}. We show that such a solution induces a family of circle patterns sharing the same modulus of cross ratios and thus our discrete CMC-1 surfaces. In this way, our discrete CMC-1 surfaces generalize those from discrete integrable systems. 

In this section, we consider a cell decomposition $M=(V,E,F)$ of a simply connected domain that is not necessarily triangulated. For simplicity, we assume the domain is without boundary.

The following is a reformulation of the discrete Toda-type equation in \cite{Bobenko2002,Adler2001}. See the appendix in \cite{Gekhtman2016} for its relation to the Toda lattice.

\begin{definition}\label{def:Toda}
		Suppose $z:V(M) \to \mathbb{C}$ is a realization of  a cell decomposition $M$. A function $q:E(M) \to \mathbb{C}$ is a solution to the discrete Toda-type equation if it satisfies  
	\begin{align}
	\sum_j q_{ij} &=0 \quad \forall i \in V \label{eq:versum}\\
	\sum_{ij \in \phi} q_{ij} &=0 \quad \forall \phi \in F \label{eq:facesum}\\
	\sum_j \frac{q_{ij}}{z_j-z_i} &=0 \quad \forall i \in V \label{eq:verzsum}
	\end{align}
    The zero function $q\equiv 0$ is a trivial solution.
\end{definition}

We shall be interested in real-valued solutions to the discrete Toda-type equation. Given a generic realization, the only real-valued solution is the trivial solution. Heuristically, consider a triangulated disk with boundary. Equations \eqref{eq:versum} and \eqref{eq:verzsum} are imposed at interior vertices while Equation \eqref{eq:facesum} is imposed on faces. Assuming the constraints are linearly independent, with the Euler characteristic, the space of the real-valued solutions is $2|V_b|-|F|-3$, where $|V_b|$ is the number of boundary vertices. Thus, a generic realization of a triangulated disk with $2|V_b|-|F|-3<0$ does not admit non-trivial real-valued solutions.

\begin{example}
	The standard square grid $\mathbb{Z}^2$ admits a nontrivial solution $q$, where it takes values $+1$ on horizontal edges and $-1$ on vertical edges. Generally, every orthogonal circle pattern admits such a nontrivial $q$ \cite{Bobenko2002,Lam2018} (See Figure \ref{fig:ortho}).
	\begin{figure}[h!]
		\centering
		\includegraphics[width=0.6 \textwidth]{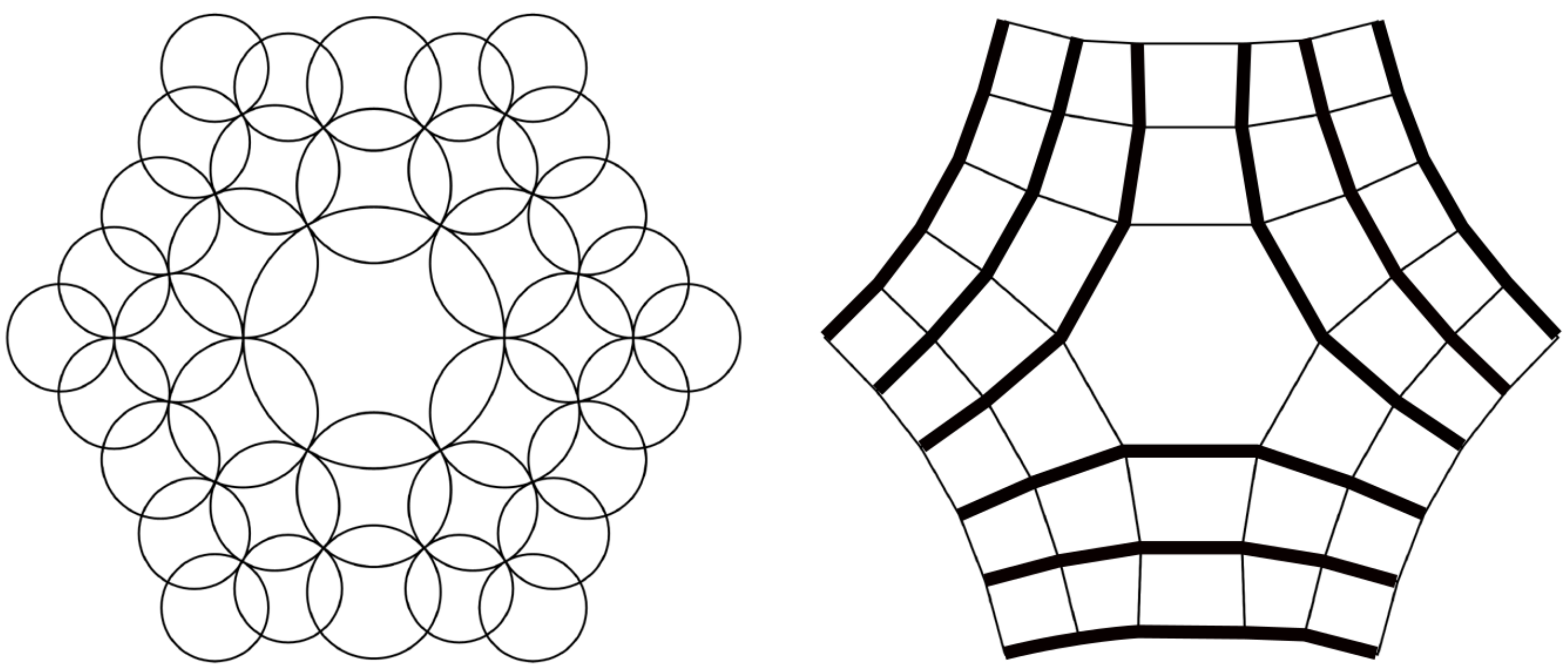}
		\caption{The intersection of an orthogonal circle pattern (left) yields a realization $z$ (right) with a nontrivial solution $q$ to the discrete Toda-type equation, where $q=1$ on thin edges and $q=-1$ on thick edges.}
		\label{fig:ortho}
	\end{figure}
\end{example}

Equations \eqref{eq:versum} and \eqref{eq:facesum} are related to a combinatorial object called a labeling on zig-zac paths of $M$. Given a cell decomposition $M$ and its dual $M^*$, we build the double $DM$ of $M$, which is a quadrilateral mesh as follows: The vertex set is $V(DM) = V(M) \cup V(M^*)$. A primal vertex $v \in V(M)$ and a dual vertex $f \in V(M^*)$ are joined by an edge in $DM$ if the vertex $v$ belongs to the face corresponding to the
dual vertex $f$. It forms a quadrilateral mesh, where each edge of $M$ corresponds to a face of $DM$. Notice that the graph $DM$ is bipartite: one can color the vertices of $M$ black and those of $M^*$ white. In this way, every edge of $DM$ connects two different color. See Figure \ref{fig:labeling}.

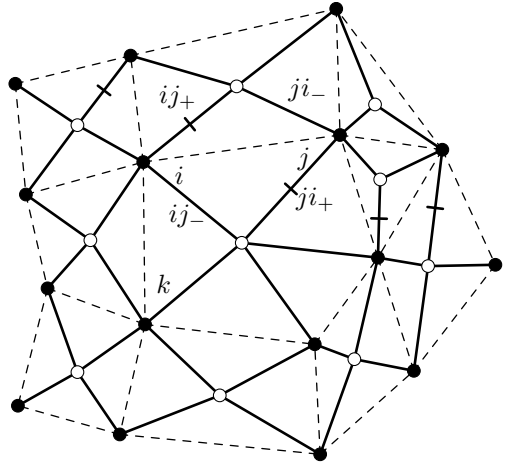
\begin{figure}
\definecolor{ffffff}{rgb}{1.,1.,1.}
\centering
\begin{tikzpicture}[line cap=round,line join=round,>=triangle 45,x=1.0cm,y=1.0cm,scale=1.2]
\clip(-4.42156497371259,-0.7594401433239386) rectangle (3.7636824637061514,4.473644004048038);
\draw [line width=0.5pt,dash pattern=on 3pt off 3pt] (-1.7,2.7)-- (-1.68,0.9);
\draw [line width=0.5pt,dash pattern=on 3pt off 3pt] (-1.68,0.9)-- (0.2,0.68);
\draw [line width=0.5pt,dash pattern=on 3pt off 3pt] (0.2,0.68)-- (0.9,1.64);
\draw [line width=0.5pt,dash pattern=on 3pt off 3pt] (0.9,1.64)-- (0.48,3.);
\draw [line width=0.5pt,dash pattern=on 3pt off 3pt] (0.48,3.)-- (-1.7,2.7);
\draw [line width=0.5pt,dash pattern=on 3pt off 3pt] (-1.68,0.9)-- (-1.96,-0.32);
\draw [line width=0.5pt,dash pattern=on 3pt off 3pt] (-1.96,-0.32)-- (0.26,-0.54);
\draw [line width=0.5pt,dash pattern=on 3pt off 3pt] (0.26,-0.54)-- (0.2,0.68);
\draw [line width=0.5pt,dash pattern=on 3pt off 3pt] (-3.,2.34)-- (-1.7,2.7);
\draw [line width=0.5pt,dash pattern=on 3pt off 3pt] (0.48,3.)-- (1.6134423312844166,2.836705187303399);
\draw [line width=0.5pt,dash pattern=on 3pt off 3pt] (1.6134423312844166,2.836705187303399)-- (0.9,1.64);
\draw [line width=0.5pt,dash pattern=on 3pt off 3pt] (-1.7,2.7)-- (-1.84,3.88);
\draw [line width=0.5pt,dash pattern=on 3pt off 3pt] (-1.84,3.88)-- (0.4379777255605342,4.3992130168631975);
\draw [line width=0.5pt,dash pattern=on 3pt off 3pt] (0.4379777255605342,4.3992130168631975)-- (0.48,3.);
\draw [line width=0.5pt,dash pattern=on 3pt off 3pt] (-3.,2.34)-- (-2.76,1.3);
\draw [line width=0.5pt,dash pattern=on 3pt off 3pt] (-2.76,1.3)-- (-1.68,0.9);
\draw [line width=0.5pt,dash pattern=on 3pt off 3pt] (-3.117085960043403,3.567786832326791)-- (-3.,2.34);
\draw [line width=0.5pt,dash pattern=on 3pt off 3pt] (-3.117085960043403,3.567786832326791)-- (-1.84,3.88);
\draw [line width=0.5pt,dash pattern=on 3pt off 3pt] (-2.76,1.3)-- (-3.088416091611113,-0.0016117874932995593);
\draw [line width=0.5pt,dash pattern=on 3pt off 3pt] (-1.96,-0.32)-- (-3.088416091611113,-0.0016117874932995593);
\draw [line width=0.5pt,dash pattern=on 3pt off 3pt] (0.26,-0.54)-- (1.2980737785292287,0.38543143634261384);
\draw [line width=0.5pt,dash pattern=on 3pt off 3pt] (1.2980737785292287,0.38543143634261384)-- (0.9,1.64);
\draw [line width=0.5pt,dash pattern=on 3pt off 3pt] (1.6134423312844166,2.836705187303399)-- (2.201174634146358,1.560896042066499);
\draw [line width=0.5pt,dash pattern=on 3pt off 3pt] (2.201174634146358,1.560896042066499)-- (1.2980737785292287,0.38543143634261384);
\draw [line width=0.5pt,dash pattern=on 3pt off 3pt] (1.6134423312844166,2.836705187303399)-- (0.4379777255605342,4.3992130168631975);
\draw [line width=1.pt] (-0.6084724722180448,1.804589923740963)-- (0.48,3.);
\draw [line width=1.pt] (-0.1278321998783764,2.460201831890974) -- (-6.402723396676448E-4,2.344388091849989);
\draw [line width=1.pt] (0.48,3.)-- (-0.6658122090826244,3.539116963894501);
\draw [line width=1.pt] (-0.6658122090826244,3.539116963894501)-- (-1.7,2.7);
\draw [line width=1.pt] (-1.1287142434820976,3.052768554575961) -- (-1.2370979656005268,3.1863484093185397);
\draw [line width=1.pt] (-1.7,2.7)-- (-0.6084724722180448,1.804589923740963);
\draw [line width=1.pt] (-0.6084724722180448,1.804589923740963)-- (-1.68,0.9);
\draw [line width=1.pt] (-1.68,0.9)-- (-2.285659775506999,1.833259792173253);
\draw [line width=1.pt] (-2.285659775506999,1.833259792173253)-- (-1.7,2.7);
\draw [line width=1.pt] (-1.7,2.7)-- (-2.429009117668448,3.1090689374101528);
\draw [line width=1.pt] (-2.429009117668448,3.1090689374101528)-- (-1.84,3.88);
\draw [line width=1.pt] (-2.202849474383677,3.5467515609437434) -- (-2.0661596432847715,3.4423173764664092);
\draw [line width=1.pt] (-1.84,3.88)-- (-0.6658122090826244,3.539116963894501);
\draw [line width=1.pt] (-3.117085960043403,3.567786832326791)-- (-2.429009117668448,3.1090689374101528);
\draw [line width=1.pt] (-2.429009117668448,3.1090689374101528)-- (-3.,2.34);
\draw [line width=1.pt] (-3.,2.34)-- (-2.285659775506999,1.833259792173253);
\draw [line width=1.pt] (-2.285659775506999,1.833259792173253)-- (-2.76,1.3);
\draw [line width=1.pt] (-2.76,1.3)-- (-2.429009117668448,0.3710965021264689);
\draw [line width=1.pt] (-2.429009117668448,0.3710965021264689)-- (-1.68,0.9);
\draw [line width=1.pt] (-3.088416091611113,-0.0016117874932995593)-- (-2.429009117668448,0.3710965021264689);
\draw [line width=1.pt] (-2.429009117668448,0.3710965021264689)-- (-1.96,-0.32);
\draw [line width=1.pt] (-1.96,-0.32)-- (-0.8521663538925082,0.11306768623585997);
\draw [line width=1.pt] (-0.8521663538925082,0.11306768623585997)-- (0.26,-0.54);
\draw [line width=1.pt] (0.26,-0.54)-- (0.638666804586562,0.5144458442879183);
\draw [line width=1.pt] (0.638666804586562,0.5144458442879183)-- (1.2980737785292287,0.38543143634261384);
\draw [line width=1.pt] (1.2980737785292287,0.38543143634261384)-- (1.4557580549068216,1.546561107850354);
\draw [line width=1.pt] (1.4557580549068216,1.546561107850354)-- (2.201174634146358,1.560896042066499);
\draw [line width=1.pt] (1.4557580549068216,1.546561107850354)-- (1.6134423312844166,2.836705187303399);
\draw [line width=1.pt] (1.4492258962089155,2.202067783863029) -- (1.6199744899823219,2.1811985112907237);
\draw [line width=1.pt] (0.9,1.64)-- (1.4557580549068216,1.546561107850354);
\draw [line width=1.pt] (0.48,3.)-- (0.9253654889094601,2.507001700332065);
\draw [line width=1.pt] (0.9253654889094601,2.507001700332065)-- (0.9,1.64);
\draw [line width=1.pt] (0.9986555635585633,2.0709855810625717) -- (0.8267099253508966,2.076016119269493);
\draw [line width=1.pt] (0.9253654889094601,2.507001700332065)-- (1.6134423312844166,2.836705187303399);
\draw [line width=1.pt] (0.4379777255605342,4.3992130168631975)-- (0.8680257520448805,3.3384278848684716);
\draw [line width=1.pt] (0.8680257520448805,3.3384278848684716)-- (0.48,3.);
\draw [line width=1.pt] (0.8680257520448805,3.3384278848684716)-- (1.6134423312844166,2.836705187303399);
\draw [line width=1.pt] (-0.6658122090826244,3.539116963894501)-- (0.4379777255605342,4.3992130168631975);
\draw [line width=1.pt] (-0.6084724722180448,1.804589923740963)-- (0.9,1.64);
\draw [line width=1.pt] (-0.6084724722180448,1.804589923740963)-- (0.2,0.68);
\draw [line width=1.pt] (-0.8521663538925082,0.11306768623585997)-- (-1.68,0.9);
\draw [line width=1.pt] (-0.8521663538925082,0.11306768623585997)-- (0.2,0.68);
\draw [line width=1.pt] (0.2,0.68)-- (0.638666804586562,0.5144458442879183);
\draw [line width=1.pt] (0.638666804586562,0.5144458442879183)-- (0.9,1.64);
\draw (-1.4542335909705943,2.7570497268164133) node[anchor=north west] {$i$};
\draw (-0.09241484043682818,2.9690689374101528) node[anchor=north west] {$j$};
\draw (-1.6262528015643332,3.65246630605595) node[anchor=north west] {$ij_+$};
\draw (-1.5192335909705943,2.328331831899775) node[anchor=north west] {$ij_-$};
\draw (-0.19275937994984252,3.732810845568965) node[anchor=north west] {$ji_-$};
\draw (-0.10975457730140782,2.52133663454821) node[anchor=north west] {$ji_+$};
\draw (-1.6562528015643332,1.5582357789310788) node[anchor=north west] {$k$};
\begin{scriptsize}
\draw [fill=black] (-1.7,2.7) circle (2.0pt);
\draw [fill=black] (-1.68,0.9) circle (2.0pt);
\draw [fill=black] (0.2,0.68) circle (2.0pt);
\draw [fill=black] (0.9,1.64) circle (2.0pt);
\draw [fill=black] (0.48,3.) circle (2.0pt);
\draw [fill=black] (1.6134423312844166,2.836705187303399) circle (2.0pt);
\draw [fill=black] (-3.,2.34) circle (2.0pt);
\draw [fill=black] (-1.96,-0.32) circle (2.0pt);
\draw [fill=black] (0.26,-0.54) circle (2.0pt);
\draw [fill=black] (-1.84,3.88) circle (2.0pt);
\draw [fill=black] (0.4379777255605342,4.3992130168631975) circle (2.0pt);
\draw [fill=black] (-2.76,1.3) circle (2.0pt);
\draw [fill=black] (-3.117085960043403,3.567786832326791) circle (2.0pt);
\draw [fill=black] (-3.088416091611113,-0.0016117874932995593) circle (2.0pt);
\draw [fill=black] (1.2980737785292287,0.38543143634261384) circle (2.0pt);
\draw [fill=black] (2.201174634146358,1.560896042066499) circle (2.0pt);
\draw [fill=ffffff] (-0.6084724722180448,1.804589923740963) circle (2.0pt);
\draw [fill=ffffff] (-0.6658122090826244,3.539116963894501) circle (2.0pt);
\draw [fill=ffffff] (-2.285659775506999,1.833259792173253) circle (2.0pt);
\draw [fill=ffffff] (-2.429009117668448,3.1090689374101528) circle (2.0pt);
\draw [fill=ffffff] (-2.429009117668448,0.3710965021264689) circle (2.0pt);
\draw [fill=ffffff] (-0.8521663538925082,0.11306768623585997) circle (2.0pt);
\draw [fill=ffffff] (0.638666804586562,0.5144458442879183) circle (2.0pt);
\draw [fill=ffffff] (1.4557580549068216,1.546561107850354) circle (2.0pt);
\draw [fill=ffffff] (0.9253654889094601,2.507001700332065) circle (2.0pt);
\draw [fill=ffffff] (0.8680257520448805,3.3384278848684716) circle (2.0pt);
\end{scriptsize}
\end{tikzpicture}
\caption{ The dotted lines and the black vertices form a cell decomposition $M$. The double $DM$ consists of the solid lines, the black and the white vertices. Every edge $\{ij\}$ of $M$ corresponds to a quadrilateral face of $DM$ enclosed by $\{ij_{+}\},\{ij_{-}\},\{ji_{+}\},\{ji_{-}\} \in E(DM)$. Under this notation, $\{ij_-\}$ and $\{ik_+\}$ represent the same edge of $E(DM)$. A collection of edges in $E(DM)$ are indicated where a labeling $\alpha$ has to take the same value.}
\label{fig:labeling}
\end{figure}

\begin{definition}
	A labeling is a function $\alpha: E(DM) \to \mathbb{C}$ on unoriented edges such that the values of $\alpha$ on two opposite edges in any quadrilateral face of $F(DM)$ are equal.
\end{definition}

Every labeling $\alpha$ on $E(DM)$ induces a function $q$ on $E(M)$ as follows. Let $\{ij\}\in E(M)$. It corresponds to a quadrilateral in $DM$. We denote by $\{ij_{+}\}, \{ij_{-}\} \in E(DM)$ the two edges in the quadrilateral sharing the vertex $i$ so that $\{ij_{+}\}$ is on the left of $\{ij\}$ and $\{ij_{-}\}$ is on the right. We define
\[
q_{ij} := \alpha_{ij_+}- \alpha_{ij_-} = \alpha_{ji_+}- \alpha_{ji_-} = q_{ji}
\]
since $\alpha_{ij_+} = \alpha_{ji_+}$ and $\alpha_{ij_-}= \alpha_{ji_-}$. By construction, it satisfies Equations \eqref{eq:versum} and \eqref{eq:facesum}. The following lemma is elementary.

\begin{lemma}
	A function $q:E(M) \to \mathbb{C}$ satisfies Equations \eqref{eq:versum} and \eqref{eq:facesum} 
	if and only if there exists a labeling $\alpha:E(DM) \to \mathbb{C}$ such that
	\[
	q_{ij} = \alpha_{ij_+} - \alpha_{ij_-}
	\] 
	The labeling is unique up to a constant. Furthermore, If $q$ is real-valued, $\alpha$ can be chosen to be real-valued as well.
\end{lemma}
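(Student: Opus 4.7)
The plan is to prove the ``if'' direction by the telescoping computation indicated in the paragraph preceding the lemma: setting $q_{ij}:=\alpha_{ij_+}-\alpha_{ij_-}$ and using the labeling condition, the sums around a vertex star and around a face both collapse to zero. The bulk of the work is the converse.

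To construct $\alpha$ from $q$, I view it as a function on \emph{corners} of $M$, i.e.\ incident pairs $(i,\phi)$ with $i\in\partial\phi$; these are in bijection with $E(DM)$. Writing $\phi_L,\phi_R$ for the faces on the left and right of the edge $\{ij\}$ oriented $i\to j$, the labeling conditions $\alpha_{ij_+}=\alpha_{ji_+}$ and $\alpha_{ij_-}=\alpha_{ji_-}$ translate to the ``cross'' identities $\tilde\alpha(i,\phi_L)=\tilde\alpha(j,\phi_R)$ and $\tilde\alpha(i,\phi_R)=\tilde\alpha(j,\phi_L)$, while the target equation $q_{ij}=\alpha_{ij_+}-\alpha_{ij_-}$ becomes $q_{ij}=\tilde\alpha(i,\phi_L)-\tilde\alpha(i,\phi_R)$. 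I will build $\tilde\alpha$ by integrating from a base corner along two kinds of elementary moves: a \emph{face move} $(i,\phi_L)\to(i,\phi_R)$ across the edge $\{ij\}$ at $i$, with increment $-q_{ij}$; and a \emph{vertex move} $(i,\phi)\to(j,\phi)$ along $\{ij\}\subset\partial\phi$, with increment $-q_{ij}$ if $\phi=\phi_L$ and $+q_{ij}$ if $\phi=\phi_R$.

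The hinge is that each elementary cycle in the resulting corner graph has total increment zero. A loop around one primal vertex $i$ uses only face moves and reduces (up to an overall sign) to $\sum_j q_{ij}$, which vanishes by \eqref{eq:versum}. A loop around one primal face $\phi$ uses only vertex moves and reduces (up to sign) to $\sum_{ij\in\phi}q_{ij}$, which vanishes by \eqref{eq:facesum}. A loop around the single quadrilateral face of $DM$ associated with an edge $\{ij\}$ alternates the two move types and telescopes to $-q_{ij}+q_{ij}-q_{ij}+q_{ij}=0$ by design. Since $M$ is simply connected, every loop in the corner graph is a product of these three kinds of elementary loops, so $\tilde\alpha$ is path-independent. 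Reading off the consequences, a single face move immediately gives $q_{ij}=\tilde\alpha(i,\phi_L)-\tilde\alpha(i,\phi_R)$, and the composite $(i,\phi_L)\to(j,\phi_L)\to(j,\phi_R)$ with increments $-q_{ij}+q_{ij}=0$ yields the labeling identity $\tilde\alpha(i,\phi_L)=\tilde\alpha(j,\phi_R)$.

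For uniqueness, if two labelings produce the same $q$, their difference $\beta$ produces $q\equiv 0$, so every face move has zero increment and $\beta$ is constant on each vertex star; the labeling identity then transports this constant across every edge, forcing $\beta$ to be globally constant on the connected mesh. Since every increment lies in $\{\pm q_{ij}\}$, the construction is real whenever $q$ is real. The main obstacle I foresee is not conceptual but combinatorial: one must pin down the signs of the two move types so that all three families of elementary cycles close simultaneously, and this is where the distinction between $\phi_L$ and $\phi_R$ for vertex moves must be tracked carefully. Once arranged, \eqref{eq:versum} and \eqref{eq:facesum} are precisely the residual closedness conditions.
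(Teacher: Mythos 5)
Your proof is correct. The paper itself offers no argument (it simply declares the lemma elementary), and your corner-graph integration is exactly the natural way to fill that in: the two move types are forced by the desired identities $q_{ij}=\alpha_{ij_+}-\alpha_{ij_-}$ and $\alpha_{ij_\pm}=\alpha_{ji_\pm}$, and the three families of elementary cycles (around a primal vertex, around a primal face, around a quadrilateral of $DM$) close precisely because of \eqref{eq:versum}, \eqref{eq:facesum} and the built-in cancellation, with simple connectivity of $M$ (a standing assumption of the section) ensuring these generate all cycles. Two cosmetic remarks: you only record one of the two opposite-edge identities, $\tilde\alpha(i,\phi_L)=\tilde\alpha(j,\phi_R)$; the other, $\tilde\alpha(i,\phi_R)=\tilde\alpha(j,\phi_L)$, follows from the symmetric composite $(i,\phi_R)\to(j,\phi_R)\to(j,\phi_L)$ with increments $+q_{ij}-q_{ij}=0$, and both are needed for $\alpha$ to be a labeling. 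Also, your uniqueness and realness arguments are fine as stated, since every increment is $\pm q_{ij}$ and the difference of two labelings inducing the same $q$ is constant on vertex stars and propagates across edges by the opposite-edge identities.
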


We then relate Equations \eqref{eq:versum} and \eqref{eq:verzsum} to the infinitesimal change in cross ratios. Consider a family of cross ratio systems $X_t:E(M) \to \mathbb{C}$ on a triangle mesh and denote $X_0=X$, we consider its logarithmic derivative
	\[
q := \frac{d}{dt}(\log X_t )|_{t=0}
\]
By differentiating \eqref{eq:crproduct} \eqref{eq:crsum}, it is known \cite{Lam2019} that the function $q$ satisfies for every interior vertex $i$ with neighboring vertices $1,2,\dots n$
	\begin{align*}
0&=\sum_j q_{ij}  \\
0 &= q_{i1} X_{i1} + (q_{i1} + q_{i2}) X_{i1} X_{i2} +  \dotsc +  (q_{i1}+q_{i2} + \dotsc + q_{in}) X_{i1}X_{i2}\dotsm X_{in}
\end{align*}
Such a linear system defines a tangent space of the space of cross ratios at $X$. Substitute the developing $z$ for the cross ratios $X$, the second equation becomes
\[
\sum_j \frac{q_{ij}}{z_j-z_i} =0.
\]
\begin{proposition}[\cite{Lam2015a,Lam2019}]
	On a triangular mesh, a function $q:E \to \mathbb{C}$ describes an infinitesimal change in the cross ratio if and only if it satisfies Equations \eqref{eq:versum} and \eqref{eq:verzsum}.
\end{proposition}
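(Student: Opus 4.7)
The forward implication---that the log-derivative $q = \tfrac{d}{dt}(\log X_t)|_{t=0}$ of a family of cross ratio systems satisfies both \eqref{eq:versum} and \eqref{eq:verzsum}---is sketched in the paragraph preceding the proposition: differentiating \eqref{eq:crproduct} yields \eqref{eq:versum}, while differentiating \eqref{eq:crsum} produces the identity
\[
\sum_{k=1}^n \Bigl(\sum_{j=1}^k q_{i,v_j}\Bigr)\prod_{l=1}^k X_{i,v_l}=0
\]
at every interior vertex $i$ with cyclically ordered neighbors $v_1,\dots,v_n$. The first task is to recast this as \eqref{eq:verzsum}. The plan is to substitute the developing-map expression $X_{i,v_l}=(z_{v_{l-1}}-z_i)(z_{v_{l+1}}-z_{v_l})/\bigl[(z_{v_{l+1}}-z_i)(z_{v_{l-1}}-z_{v_l})\bigr]$ into the partial product, use telescoping to evaluate $\prod_{l=1}^k X_{i,v_l}$ explicitly as a rational function in the differences $z_{v_j}-z_i$, and then swap the order of summation. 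Using \eqref{eq:crsum} itself to rewrite $\sum_{k=j}^n \prod_{l=1}^k X_{i,v_l}=-\sum_{k=1}^{j-1}\prod_{l=1}^k X_{i,v_l}$, the coefficient of each $q_{i,v_j}$ should reduce to a common nonzero multiple of $1/(z_{v_j}-z_i)$, producing \eqref{eq:verzsum}.

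For the converse implication, given $q$ satisfying both equations, I would construct an honest variation $X_t$ via an infinitesimal deformation $\dot{z}$ of the developing map. The formula
\[
q_{ij}=\frac{\dot z_i-\dot z_k}{z_i-z_k}+\frac{\dot z_j-\dot z_l}{z_j-z_l}-\frac{\dot z_i-\dot z_l}{z_i-z_l}-\frac{\dot z_j-\dot z_k}{z_j-z_k},
\]
valid on the edge $\{ij\}$ with left triangle $\{ijk\}$ and right triangle $\{jil\}$, serves as a local recipe for propagating $\dot z$ from a chosen base triangle across the simply connected mesh. The simple-connectivity assumption removes any obstruction along nontrivial loops; the only local obstruction is monodromy around an interior vertex, and \eqref{eq:versum} together with \eqref{eq:verzsum} are precisely the two complex constraints needed to annihilate it (the remaining, translational component of the infinitesimal Möbius monodromy being automatic from the definition of $\dot z$). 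Once $\dot z$ is defined globally, setting $z_t:=z+t\dot z$ yields a one-parameter family of developing maps whose cross ratios $X_t$ satisfy $q=\tfrac{d}{dt}(\log X_t)|_{t=0}$.

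The main obstacle is the algebraic bookkeeping in the first step: carefully tracking signs coming from the orientation convention and the no-branching condition $\sum_j \Arg X_{ij}=2\pi$, and verifying that the telescoped partial product combines with the double-sum rearrangement to produce \eqref{eq:verzsum} cleanly. Once that identity is in hand, the converse is its formal inverse---one reconstructs $\dot z$ by integrating the vertex-wise vanishing monodromy---and the proposition follows.
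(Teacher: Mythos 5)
Your proposal is correct, and its first half is exactly the computation the paper sketches in the paragraph before the proposition; the second half (sufficiency) is not proved in this paper at all — it is delegated to the cited references — and your deformation construction supplies it in essentially the same way those references (and Section \ref{sec:minimal}'s osculating M\"{o}bius vector fields) do. Two confirmations and one caution. With $w_l=z_{v_l}-z_i$ and the convention forced by \eqref{eq:crproduct}--\eqref{eq:crsum}, the partial products telescope as $\prod_{l\le k}X_{i,v_l}=c\,\bigl(1/w_k-1/w_{k+1}\bigr)$ for a nonzero constant $c$ depending only on the star, so after swapping the order of summation the coefficient of $q_{i,v_j}$ is $c\,\bigl(1/w_j-1/w_1\bigr)$; it becomes a common multiple of $1/(z_{v_j}-z_i)$ only after the constant piece $-c/w_1$ is annihilated by \eqref{eq:versum}, exactly as you anticipate — but note that your displayed formula for $X_{i,v_l}$ is off by a sign in the factor $(z_{v_{l-1}}-z_{v_l})$ under the paper's orientation conventions, which is the kind of bookkeeping you already flag. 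For the converse, the cleanest bookkeeping is face-based: the difference of the M\"{o}bius vector fields of the two faces adjacent along $\{i,v_j\}$ is $c_{ij}(w-z_i)(w-z_{v_j})\,\partial/\partial w$ with $c_{ij}=q_{ij}/(z_{v_j}-z_i)$ up to an orientation sign, so the monodromy around an interior vertex $i$ automatically vanishes at $z_i$ (this, rather than a ``translational component'', is the automatic part), and its identical vanishing amounts to the two conditions $\sum_j c_{ij}=0$ and $\sum_j c_{ij}z_{v_j}=0$, which are precisely \eqref{eq:verzsum} and, in combination with it, \eqref{eq:versum}; simple connectivity (assumed throughout Section \ref{sec:toda}) then integrates the local data to a global $\dot z$, and $z_t=z+t\dot z$ realizes $q$ as the logarithmic derivative of the cross ratios, which is even a little stronger than mere membership in the linearized solution space. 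So both directions go through as you describe.
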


To summarize, a solution to the discrete Toda-type equation represents a tangent vector in the space of cross ratios that is also induced from a labeling on the graph. In the following, we show that such a solution induces a canonical family of cross ratios.

\begin{theorem}\label{thm:inttoda}
	Suppose a realization $z:V(M) \to \mathbb{C}$ of a cell decomposition $M$ admits a solution $q:E(M) \to \mathbb{C}$ to the discrete Toda-type equation with labeling $\alpha$. We denote by $TM$ a triangulation of $M$ and $X:E(TM)\to \mathbb{C}$ the cross ratios.
	Then for small $t$, there is a 1-parameter family of cross ratios $X_{t}:E(TM)\to \mathbb{C}$
\begin{align*}
	X_{t,ij}:= \begin{cases}
	\frac{1- t \alpha_{ij_{-}}}{1 - t \alpha_{ij_{+}}} X_{ij} \quad &\text{ if } \{ij\} \in E(M)\\
	X_{ij}  \quad &\text{ if } \{ij\} \in E(TM)-E(M) 
	\end{cases}
\end{align*}
satisfying \eqref{eq:crproduct} and \eqref{eq:crsum}. It defines realizations $z_{t}:V(M) \to \mathbb{C}$ of $M$  and osculating M\"{o}bius transformations $\tilde{A}_{t}:F(M) \to SL(2,\mathbb{C})/\{\pm I\}$ from $z$ to $z_t$. Both $z_t$ and $\tilde{A}_t$ are independent of the triangulation chosen. 
\end{theorem}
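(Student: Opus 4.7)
The plan is to construct the realization $z_t$ explicitly from a family of face-wise M\"obius transformations $B_w(t)\in SL(2,\mathbb{C})/\{\pm I\}$, one per face $w\in F(M)$; the cross ratio conditions \eqref{eq:crproduct} and \eqref{eq:crsum}, the realization $z_t$, and the osculating M\"obius transformation $\tilde A_t$ will then all fall out of a single construction. Fixing a base face $w_0$ with $B_{w_0}=I$, I would define $B_w$ by path-lifting along the dual graph of $M$: across each primal edge $\{ij\}\in E(M)$ separating a left face $w_l$ from a right face $w_r$, the transition $B_{w_r}^{-1}B_{w_l}$ is prescribed to be the M\"obius transformation $M_{ij}(t)$ fixing $z_i$ and $z_j$ with eigenvalue $\lambda_{ij}$ on $(z_i,1)^T$ satisfying $\lambda_{ij}^{2}=(1-t\alpha_{ij_+})/(1-t\alpha_{ij_-})$, in the normal form of Proposition~\ref{prop:transit}.

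For $B_w$ to be single-valued on $F(M)$, the monodromy $\prod_k M_{ij_k}$ around every interior vertex $i\in V(M)$ must lie in $\{\pm I\}$; this is the crux of the proof. Label the primal neighbors of $i$ cyclically as $j_1,\ldots,j_n$. Adjacent primal edges $\{ij_k\}$ and $\{ij_{k+1}\}$ share a $DM$-edge at $i$ on which $\alpha$ is constant, which gives the telescoping identity $\alpha_{(ij_k)_+}=\alpha_{(ij_{k+1})_-}$. After normalizing $z_i=0$, each $M_{ij_k}$ is lower triangular in $SL(2,\mathbb{C})$ (since it fixes $0$), and so is the full product $\prod_k M_{ij_k}$. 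Its diagonal entries are $\prod_k\lambda_{ij_k}^{\pm 1}$, which are $\pm 1$ because $\prod_k\lambda_{ij_k}^{2}=1$ by the telescoping; this same telescoping immediately yields \eqref{eq:crproduct} for $X_t$ (the diagonals of $TM$ contribute trivially since $X_{t,ij}=X_{ij}$ there). The off-diagonal entry is governed by a standard linear recursion for products of lower triangular matrices; using the same telescoping to collapse the partial products $\lambda_{ij_1}^{2}\cdots\lambda_{ij_k}^{2}$ to two-term ratios of the form $(1-t\alpha_{(ij_{k+1})_-})/(1-t\alpha_{(ij_1)_-})$, it reduces to a scalar multiple of $\sum_k q_{ij_k}/(z_{j_k}-z_i)$, which vanishes by the Toda equation \eqref{eq:verzsum}. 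Hence $\prod_k M_{ij_k}\in\{\pm I\}$, so $B_w$ is well-defined on $F(M)$, and $z_t(i):=B_w(t)(z_i)$ for any face $w\ni i$ yields a realization $z_t:V(M)\to\mathbb{C}$.

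It remains to verify that $X_t$ is the cross ratio system of $z_t$ on $TM$, which makes \eqref{eq:crsum} automatic. For a diagonal $\{ij\}\in E(TM)\setminus E(M)$, the four vertices entering the cross ratio all lie in a single face $w$ and are transformed by the same $B_w$, so M\"obius invariance gives $X_{t,ij}=X_{ij}$ in accordance with the definition. For a primal edge $\{ij\}\in E(M)$, the two adjacent triangles of $TM$ lie in different faces, and undoing one of them by the transition $M_{ij}$ changes the cross ratio by exactly $\lambda_{ij}^{-2}=(1-t\alpha_{ij_-})/(1-t\alpha_{ij_+})$, again matching the definition of $X_{t,ij}$. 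The osculating M\"obius transformation from $z$ to $z_t$ at face $w$ is then precisely $\tilde A_t(w):=B_w(t)$. Because the construction uses only the cell structure of $M$ together with the Toda data $(z,\alpha)$, and $TM$ enters only through diagonals on which $X_{t,ij}=X_{ij}$, both $z_t$ and $\tilde A_t$ are manifestly independent of the triangulation. The hard part is the monodromy calculation in the previous paragraph: the precise form $\lambda^{2}=(1-t\alpha_+)/(1-t\alpha_-)$ is what makes the partial products telescope into two-term ratios, and this is the unique point where all three Toda equations enter simultaneously.
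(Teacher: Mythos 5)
Your proposal is correct, but it organizes the argument in the reverse order from the paper, and the comparison is worth recording. The paper first verifies, by a scalar computation using only the labeling property $\alpha_{ik_+}=\alpha_{i(k+1)_-}$, that $X_t$ satisfies the closure equations \eqref{eq:crproduct} and \eqref{eq:crsum}; it then obtains $z_t$ as the developing map of this cross ratio system and notes that the transition across any diagonal of $TM$ is trivial (since $X_{t}=X$ there), so $\tilde A_t$ and $z_t$ descend to $F(M)$ and are triangulation-independent. You instead build $\tilde A_t$ first, by path-lifting the prescribed transitions of Proposition \ref{prop:transit} across the edges of $M$, prove triviality of the vertex monodromy by an explicit product of lower triangular matrices, and only then read off \eqref{eq:crproduct} and \eqref{eq:crsum}, since $X_t$ is exhibited as the cross ratio system of an actual realization $z_t$ (nondegenerate for small $t$). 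The two routes are close — triviality of the monodromy in $SL(2,\mathbb{C})/\{\pm I\}$ is essentially equivalent to the vertex equations — but yours has the merit of making explicit where the nontrivial Toda condition \eqref{eq:verzsum} enters; the paper's proof cites only the labeling property, which accounts for \eqref{eq:versum}, \eqref{eq:facesum} and \eqref{eq:crproduct} but not for \eqref{eq:crsum}. One caution on your key step: the collapse of the off-diagonal entry is sensitive to orientation bookkeeping. Writing $\beta_0,\dots,\beta_{n-1}$ for the labels of the faces around $i$ and $a_k=z_{j_k}-z_i$, if the telescoping $\alpha_{(ij_k)_+}=\alpha_{(ij_{k+1})_-}$ is paired with the crossing direction inconsistently, the entry collapses instead to a constant times $\sum_k a_k^{-1}\left((1-t\beta_k)^{-1}-(1-t\beta_{k-1})^{-1}\right)$, which does not follow from \eqref{eq:verzsum}; with the consistent choice (crossing every edge of the star in the same rotational sense, the eigenvalue at $(z_i,1)^T$ being the same branch of $X/X_t$ throughout), the partial products collapse to ratios $(1-t\beta_{k})/(1-t\beta_0)$ and the entry becomes, up to sign, $\frac{t}{1-t\beta_0}\sum_k q_{ij_k}/(z_{j_k}-z_i)$ — a fixed scalar multiple of the Toda sum, exactly as you claim. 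I checked that with your stated conventions the computation does close up, so the argument stands as written, but this is the one place where a sign slip would silently break the proof.
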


\begin{proof}
	The function $X_t$ satisfies Equation \eqref{eq:crproduct} and \eqref{eq:crsum} by direct computation and the property of the labeling that if a vertex $i$ has neighboring vertices $1,2,\dots n$ in $M$, then $\alpha_{ik_{+}} =\alpha_{i(k+1)_{-}}$ for $k=1,2,3\dots,n$.
	
	For each $t$, there is a developing map $z_t$ and $\tilde{A}_t:F(TM)\to SL(2,\mathbb{C})/\{\pm I\}$ mapping $z$ to $z_t$. Recall that if $\{ijk\},\{jil\} \in F(TM)$ are two neighbouring faces sharing an edge $\{ij\}$, then $\tilde{A}_{t,jil}^{-1} \tilde{A}_{t,ijk}$ has eigenvectors $(z_i,1)^T$ and $(z_j,1)^T$ with eigenvalues  $\pm (\frac{X_{t,ij}}{X_{ij}})^{\frac{1}{2}}$ and  $\pm (\frac{X_{t,ij}}{X_{ij}})^{-\frac{1}{2}}$. If $\{ij\} \in E(TM)-E(M)$, we have $\tilde{A}_{t,jil}= \tilde{A}_{t,ijk}$. If $\{ij\} \in E(M)$, we have $\lambda_{ij}^2 = 	\frac{1- t \alpha_{ij_{-}}}{1 - t \alpha_{ij_{+}}} $. Hence $\tilde{A}_{t}$ is well defined on the faces of $M$ and is independent of the triangulation. Since $z_{t}$ is the image of $z$ under M\"{o}bius transformations $\tilde{A}_{t}$, we deduce that $z_{t}$ is independent of the triangulation as well.
\end{proof}

\begin{corollary}\label{cor:familyofx}
	If $q:E(M) \to \mathbb{R}$ is a real-valued solution to the discrete Toda-type equation, then the labeling $\alpha$ can be chosen to take real values and for any small $t \in \mathbb{R}$ 
	\begin{align*}
		\Im \log X_t &= \Im \log X \\
		\Re \log X_{\mathbf{i}t} &= \Re \log  X_{-\mathbf{i}t}
	\end{align*}
	where $\mathbf{i}=\sqrt{-1}$. In particular, for small $t>0$, the osculating M\"{o}bius transformation from $z_{\mathbf{i}t}$ to  $z_{-\mathbf{i}t}$ induces a discrete CMC-1 surface. 	
\end{corollary}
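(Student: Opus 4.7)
The plan is to decompose the statement into three parts matching the three assertions of the corollary, and to handle each via direct manipulation of the formula $X_{t,ij} = \frac{1-t\alpha_{ij_-}}{1-t\alpha_{ij_+}} X_{ij}$, together with the preceding lemma on labelings and Theorem \ref{thm:horo}.

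First I would apply the lemma stated just before Theorem \ref{thm:inttoda}: if $q:E(M)\to\mathbb{R}$ is real-valued, then one may take $\alpha:E(DM)\to\mathbb{R}$ real-valued as well, since the lemma already asserts this conclusion. This puts us in the situation where $\alpha_{ij_\pm}\in\mathbb{R}$ throughout.

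Next I would prove $\Im\log X_t = \Im\log X$ for real $t$ close to $0$. For $t\in\mathbb{R}$ small enough, both $1-t\alpha_{ij_-}$ and $1-t\alpha_{ij_+}$ are positive real numbers, so the multiplicative factor $\frac{1-t\alpha_{ij_-}}{1-t\alpha_{ij_+}}$ is a positive real. Hence this factor contributes only to the modulus of $X_{t,ij}$, leaving $\Arg X_{t,ij}=\Arg X_{ij}$, which is precisely $\Im\log X_t=\Im\log X$. For the second identity $\Re\log X_{\mathbf{i}t}=\Re\log X_{-\mathbf{i}t}$, observe that when $t,\alpha\in\mathbb{R}$ we have
\[
\overline{1-\mathbf{i}t\alpha_{ij_\pm}} = 1+\mathbf{i}t\alpha_{ij_\pm},
\]
so $X_{-\mathbf{i}t,ij}/X_{ij}$ is the complex conjugate of $X_{\mathbf{i}t,ij}/X_{ij}$, and in particular they have the same modulus. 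Since $\Re\log w = \log|w|$, this yields $\Re\log X_{\mathbf{i}t,ij} = \Re\log X_{-\mathbf{i}t,ij}$ on every edge of $M$; on the diagonal edges $\{ij\}\in E(TM)\setminus E(M)$ both sides equal $\Re\log X_{ij}$, so the identity holds on all of $E(TM)$.

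Finally, for the CMC-1 conclusion I would combine the two identities with Theorem \ref{thm:horo}. The developing maps $z_{\mathbf{i}t}$ and $z_{-\mathbf{i}t}$ are two circle patterns on $V(M)$ whose cross ratio systems share the same real part, i.e.\ the same shear coordinates. Since $X=X_0$ is Delaunay and the map $s\mapsto X_s$ is continuous, for sufficiently small $t>0$ both $X_{\mathbf{i}t}$ and $X_{-\mathbf{i}t}$ remain Delaunay by continuity of $\Arg X_{s,ij}$ in $s$. Theorem \ref{thm:horo} then produces the desired discrete CMC-1 surface as $AA^*$, where $A$ is the osculating M\"{o}bius transformation from $z_{\mathbf{i}t}$ to $z_{-\mathbf{i}t}$.

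The only delicate step is the preservation of the Delaunay condition: the algebraic identities $\Im\log X_t=\Im\log X$ and $\Re\log X_{\mathbf{i}t}=\Re\log X_{-\mathbf{i}t}$ are formal consequences of the formula, but applying Theorem \ref{thm:horo} rigorously requires the Delaunay hypothesis on both $X_{\mathbf{i}t}$ and $X_{-\mathbf{i}t}$. I would therefore state explicitly that the conclusion holds for $t$ small enough that the Delaunay condition survives under the perturbation, or else invoke the Remark following Theorem \ref{thm:horo} which extends the correspondence to general (not necessarily Delaunay) circle patterns sharing the same shear coordinates.
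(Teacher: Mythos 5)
Your argument is correct and is essentially the proof the paper intends: the corollary is stated as an immediate consequence of the formula in Theorem \ref{thm:inttoda}, and your direct verification (the factor $\frac{1-t\alpha_{ij_-}}{1-t\alpha_{ij_+}}$ is positive real for small real $t$, and the factors for $\pm\mathbf{i}t$ are complex conjugates, hence of equal modulus) followed by an application of Theorem \ref{thm:horo} is exactly that route. Your explicit treatment of the Delaunay condition---taking $t$ small enough that it survives, or invoking the Remark after Theorem \ref{thm:horo} for the non-Delaunay case---sensibly addresses a point the paper leaves implicit.
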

The above statement holds analogously if $q$ is purely imaginary or generally if $\Arg q \mod \pi$ is constant. 

Finally, we sketch how our discrete CMC-1 surfaces are related to those of Hertrich-Jeromin \cite{Hertrich-Jeromin2000}. In \cite{Hertrich-Jeromin2000}, Hertrich-Jeromin started with a quadrilateral mesh  $z:V(\mathbb{Z}^2) \to \mathbb{C}$ with factorized cross ratios. The vertices of $\mathbb{Z}^2$ can be colored black and white in such a way that each edge connects a black vertex to a white vertex. The black vertices form a sub-lattice $\mathbb{Z}^2_b$. The white vertices form another sub-lattice $\mathbb{Z}^2_w$, which naturally is a dual graph of $\mathbb{Z}^2_b$. The restriction $z|_{\mathbb{Z}^2_b}$ is known to possess a real-valued solution to the discrete Toda-type equation \cite{Bobenko2002}. By Theorem \ref{thm:inttoda}, it produces a family of osculating M\"{o}bius transformations $A_t$ on the dual graph, which turns out to be the ``Calapso transformations" \cite{Hertrich-Jeromin2000} restricted to the white vertices $\mathbb{Z}^2_w$. By considering the discrete CMC-1 surfaces in \cite{Hertrich-Jeromin2000} restricted to $\mathbb{Z}^2_b$, one obtains discrete CMC-1 surfaces considered in this article.

\section{Convergence to smooth surfaces} \label{sec:convergence}
In this section, we show that every simply connected umbilic-free CMC-1 surface can be approximated by our discrete CMC-1 surfaces. Based on the convergence result on circle patterns by He-Schramm \cite{Schramm1998} and B\"{u}cking \cite{Bucking2016}, we deduce that discrete osculating M\"{o}bius transformations converge to their smooth counterparts and so do discrete CMC-1 surfaces as a result of the Weierstrass-type representation.

We follow the notations in \cite{Bucking2018}. We consider a regular triangular lattice $T$ in the plane with acute angles $\alpha, \beta,\gamma \in (0,\pi/2)$ and edge directions
\[
\omega_1 = 1 = -\omega_4, \quad \omega_2 = e^{\mathbf{i} \beta} = - \omega_5, \quad \omega_3 = e^{\mathbf{i}(\alpha + \beta)} = -\omega_6
\]
as well as edge lengths
\[
L_1 = \sin \alpha = L_4, \quad L_2= \sin \gamma = L_5, \quad L_3= \sin \beta = L_6
\]
(See Figure \ref{fig:lattice}). Denote $ T^{(\epsilon)}$ the lattice $T$ scaled by $\epsilon >0$. Its vertex set is parameterized by the vertex position in complex coordinates
\[
V^{(\epsilon)} = \{ n \, \epsilon  \sin \alpha +m \,\epsilon \,  e^{\mathbf{i} \beta} \sin \gamma | m,n \in \mathbb{Z}\}.
\]
By abuse of notation,  $ T^{(\epsilon)}$ might refer to the geometric realization or solely the combinatorics. Its meaning shall be clear from the context. For a subcomplex of $ T^{(\epsilon)}$, its support is the union of vertices, edges and faces as a subset of $\mathbb{C}$.
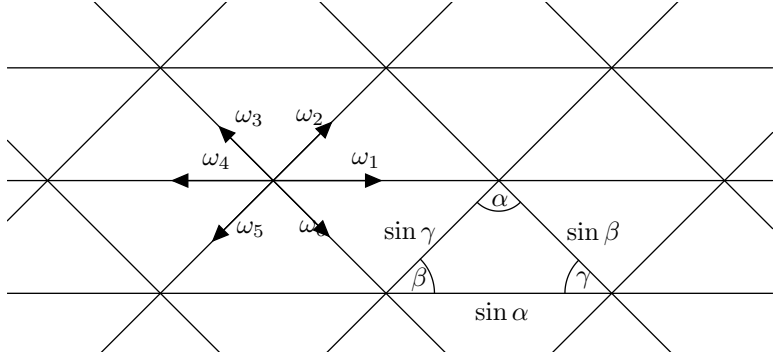
\begin{figure}
	\begin{tikzpicture}[line cap=round,line join=round,>=triangle 45,x=1.5cm,y=1.5cm]
	\clip(-2.3566636497721896,0.4849495367372822) rectangle (4.426543031054168,3.583209347655252);
	\draw [line width=.5pt,domain=-2.3566636497721896:4.426543031054168] plot(\x,{(-6.-3.*\x)/3.});
	\draw [line width=.5pt,domain=-2.3566636497721896:4.426543031054168] plot(\x,{(-0.-4.*\x)/4.});
	\draw [line width=.5pt,domain=-2.3566636497721896:4.426543031054168] plot(\x,{(--8.-4.*\x)/4.});
	\draw [line width=.5pt,domain=-2.3566636497721896:4.426543031054168] plot(\x,{(--16.-4.*\x)/4.});
	\draw [line width=.5pt,domain=-2.3566636497721896:4.426543031054168] plot(\x,{(--18.-3.*\x)/3.});
	\draw [line width=.5pt,domain=-2.3566636497721896:4.426543031054168] plot(\x,{(--12.--3.*\x)/3.});
	\draw [line width=.5pt,domain=-2.3566636497721896:4.426543031054168] plot(\x,{(--8.--4.*\x)/4.});
	\draw [line width=.5pt,domain=-2.3566636497721896:4.426543031054168] plot(\x,{(-0.-4.*\x)/-4.});
	\draw [line width=.5pt,domain=-2.3566636497721896:4.426543031054168] plot(\x,{(--8.-4.*\x)/-4.});
	\draw [line width=.5pt,domain=-2.3566636497721896:4.426543031054168] plot(\x,{(--12.-3.*\x)/-3.});
	\draw [line width=.5pt,domain=-2.3566636497721896:4.426543031054168] plot(\x,{(--8.-0.*\x)/8.});
	\draw [line width=.5pt,domain=-2.3566636497721896:4.426543031054168] plot(\x,{(--12.-0.*\x)/6.});
	\draw [line width=.5pt,domain=-2.3566636497721896:4.426543031054168] plot(\x,{(--12.-0.*\x)/4.});
	\draw [line width=.5pt,domain=-2.3566636497721896:4.426543031054168] plot(\x,{(--8.-0.*\x)/2.});
	\draw [line width=.5pt,domain=-2.3566636497721896:4.426543031054168] plot(\x,{(-0.-0.*\x)/6.});
	\draw [line width=.5pt,domain=-2.3566636497721896:4.426543031054168] plot(\x,{(-4.-0.*\x)/4.});
	\draw [line width=.5pt,domain=-2.3566636497721896:4.426543031054168] plot(\x,{(-4.-0.*\x)/2.});
	\draw [shift={(1.,1.)},line width=.5pt]  plot[domain=0.:0.7853981633974483,variable=\t]({1.*0.42908390214233005*cos(\t r)+0.*0.42908390214233005*sin(\t r)},{0.*0.42908390214233005*cos(\t r)+1.*0.42908390214233005*sin(\t r)});
	\draw [shift={(3.,1.)},line width=.5pt]  plot[domain=2.356194490192345:3.141592653589793,variable=\t]({1.*0.4147244327346166*cos(\t r)+0.*0.4147244327346166*sin(\t r)},{0.*0.4147244327346166*cos(\t r)+1.*0.4147244327346166*sin(\t r)});
	\draw [shift={(1.9989501719213416,2.000196688466617)},line width=.5pt]  plot[domain=3.9302350589057937:5.499692984893866,variable=\t]({1.*0.2716880770082122*cos(\t r)+0.*0.2716880770082122*sin(\t r)},{0.*0.2716880770082122*cos(\t r)+1.*0.2716880770082122*sin(\t r)});
	\draw (1.8519241955499378,1.9575669417242752) node[anchor=north west] {$\alpha$};
	\draw (1.1402743698636048,1.31376358047393) node[anchor=north west] {$\beta$};
	\draw (2.6025740212362705,1.2758659375166046) node[anchor=north west] {$\gamma$};
	\draw [->,line width=.5pt] (0.,2.) -- (0.9687284360176696,2.);
	\draw [->,line width=.5pt] (0.,2.) -- (0.5256116741926871,2.525611674192687);
	\draw [->,line width=.5pt] (0.,2.) -- (-0.4869891799370858,2.486989179937086);
	\draw [->,line width=.5pt] (0.,2.) -- (-0.9069545702288255,2.);
	\draw [->,line width=.5pt] (0.,2.) -- (-0.5454194102606018,1.4545805897393982);
	\draw [->,line width=.5pt] (0.,2.) -- (0.504184323657844,1.495815676342156);
	\draw (0.6143450811438403,2.333929229041483) node[anchor=north west] {$\omega_1$};
	\draw (0.12090400721070334,2.7097797311453187) node[anchor=north west] {$\omega_2$};
	\draw (-0.4168717459450493,2.700830909666656) node[anchor=north west] {$\omega_3$};
	\draw (-0.7100804976982452,2.3249804075628204) node[anchor=north west] {$\omega_4$};
	\draw (-0.40582056742371203,1.7164605470137537) node[anchor=north west] {$\omega_5$};
	\draw (0.14900636425337788,1.7254093684924163) node[anchor=north west] {$\omega_6$};
	\draw (1.7066406948486593,1.0000430435073614) node[anchor=north west] {$\sin{\alpha}$};
	\draw (2.519392877577667,1.7212046544070674) node[anchor=north west] {$\sin{\beta}$};
	\draw (0.90005814409916285,1.7212046544070674) node[anchor=north west] {$\sin{\gamma}$};
	\end{tikzpicture}
	\caption{Triangular lattice}
	\label{fig:lattice}
\end{figure}

\begin{proposition}[\cite{Bucking2016}]  \label{prop:ulrike}
	Let $h:\Omega \to \mathbb{C}$ be a locally univalent function and $K \subset \Omega$ be a compact set which is the closure of its simply connected interior domain $\Omega_K := \mbox{int}(K)$. Consider a triangular lattice $T$ with strictly acute angles. For each $\epsilon>0$, let $T^{(\epsilon)}_K$ be a maximal subcomplex of $ T^{(\epsilon)}$ whose support is contained in $K$ and is homeomorphic to a closed disk. 
	
	Then if $\epsilon >0$ is small enough (depending on $K$, $h$ and $T$), there exists another realization $h^{(\epsilon)}:V^{(\epsilon)}_K \to \mathbb{C}$ with the same combinatorics of $T^{(\epsilon)}_K$ such that 
	\begin{enumerate}
		\item $h^{(\epsilon)}$ is Delaunay and share the same modulus of cross ratios with the lattice $T^{(\epsilon)}_K$
		\item For all $z$ in the support of $T^{(\epsilon)}_K$, we have $|h^{(\epsilon)}_{PL}(w) - h(w)| \leq C \epsilon$ where $h^{(\epsilon)}_{PL}$ is the piecewise linear extension of $h^{(\epsilon)}$ over triangular faces and $C$ is a constant depending only on $K,f,T$.
	\end{enumerate}
\end{proposition}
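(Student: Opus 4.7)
The plan is to construct $h^{(\epsilon)}$ as the unique Delaunay realization of $T^{(\epsilon)}_K$ sharing the shear coordinates of $T^{(\epsilon)}$ whose boundary vertices are placed at $h(v)$ for $v \in \partial V^{(\epsilon)}_K$, and then to quantify convergence by comparison with local linearizations of $h$. For existence I would exploit that the condition of being a Delaunay realization of a given combinatorial disk with prescribed shear coordinates is governed by a convex variational principle in the style of Bobenko-Pinkall-Springborn / Rivin, yielding a bijection between admissible boundary data and interior realizations, unique up to post-composition by a M\"obius transformation. Prescribing boundary vertices at $h(v)$ then selects the canonical $h^{(\epsilon)}$; admissibility for small $\epsilon$ follows because the values of $h$ on $\partial V^{(\epsilon)}_K$ are a small perturbation of the boundary of a genuine conformal image of the regular lattice.

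Then for the $O(\epsilon)$ estimate I would argue locally. Fix $w_0 \in \Omega_K$ and let $\ell(w) = h(w_0) + h'(w_0)(w - w_0)$ be the best affine approximation of $h$ near $w_0$. Since $\ell$ is a complex similarity of $\mathbb{C}$, its vertexwise action on $T^{(\epsilon)}_K$ is automatically Delaunay with exactly the same shear coordinates as the regular lattice. On a mesoscopic patch $B_r(w_0)$ one has $|h(w)-\ell(w)| \leq C r^2$, so $h^{(\epsilon)}$ and $\ell \circ T^{(\epsilon)}_K$ are two solutions of the \emph{same} discrete shear-coordinate equation, whose patch-boundary values differ by at most $O(r^2)$. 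A discrete maximum principle together with a uniform interior regularity estimate would propagate this boundary control to the interior. Covering $K$ by finitely many such patches and optimizing $r$ as a power of $\epsilon$ would then yield the global estimate $|h^{(\epsilon)}_{PL}(w) - h(w)| \leq C\epsilon$.

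The hard part is the uniform interior regularity for the discrete shear-coordinate equation, namely the statement that small perturbations of boundary data on a combinatorial disk in $T^{(\epsilon)}_K$ produce proportionally small perturbations of the realization on concentric subdisks, with constants independent of the mesh scale $\epsilon$. For tangent circle packings this is the content of He-Schramm's ring lemma and rigidity theorem \cite{Schramm1998}; its extension to Delaunay circle patterns of the triangular lattice with prescribed shear coordinates is precisely the content of B\"{u}cking's work \cite{Bucking2016}, and is what makes the perturbation-and-comparison scheme above go through. Once such uniform discrete Schauder-type estimates are in hand, the rest of the argument is a routine combination of Taylor expansion, patching and optimization in~$r$.
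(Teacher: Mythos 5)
This proposition is not proved in the paper at all: it is imported verbatim from B\"{u}cking \cite{Bucking2016} (with the follow-up $C^\infty$ statement from \cite{Bucking2018}), so the only question is whether your sketch is a self-contained proof. It is not. The step you yourself single out as ``the hard part'' --- a uniform-in-$\epsilon$ interior rigidity/stability estimate for Delaunay realizations of the triangular lattice with prescribed shear coordinates --- is discharged by citing \cite{Schramm1998} and \cite{Bucking2016}. But the He--Schramm ring lemma and rigidity theorem are statements about tangent circle packings and do not transfer for free to the shear-coordinate (conformally equivalent triangle mesh) setting, and appealing to \cite{Bucking2016} for this step is circular: that estimate \emph{is} the substance of the result you are asked to prove. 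As written, the proposal is an outline of where a proof would have to go, with the decisive analytic input assumed.

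There is also a concrete mismatch in the existence step. The convex variational principle of Bobenko--Pinkall--Springborn/Luo governs realizations with prescribed shear coordinates ($\Re\log X$ fixed) when the boundary data are \emph{scale factors} (logarithmic conformal factors at boundary vertices) or boundary angles; it does not give existence or uniqueness for a Dirichlet problem in which the boundary vertex \emph{positions} are pinned to $h(v)$, which is the normalization you propose. Indeed the remark immediately after the proposition in this paper points out that uniqueness in \cite{Bucking2016} is obtained by prescribing boundary scale factors, not positions. Similarly, the ``discrete maximum principle'' you invoke to propagate the $O(r^2)$ boundary discrepancy into the interior is not an off-the-shelf fact for this nonlinear equation: a comparison/stability estimate with constants independent of the mesh size is exactly what B\"{u}cking establishes through her a priori estimates on the conformal factors. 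To make your scheme honest you would need to either reproduce those estimates (e.g.\ uniform bounds on the discrete conformal factors and a genuine discrete Harnack/maximum principle for the associated linearized equation) or restructure the argument around boundary scale-factor data, where the variational principle actually applies.
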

The uniqueness of $h^{(\epsilon)}$ holds if \textit{scale factors} are prescribed at boundary vertices (See \cite{Bucking2016}). It was later improved to be $C^{\infty}$-convergence by B\"{u}cking \cite{Bucking2018} in the sense of locally uniform convergence as considered by He-Schramm \cite{Schramm1998}.

\begin{definition}Let $f: \Omega \to \mathbb{C}^{d}$. For each $\epsilon > 0$, let $f^{(\epsilon)}$ be defined on some subset $V^{(\epsilon)}_0 \subset V^{(\epsilon)}$ with values in $\mathbb{C}^d$. Assume for each $z \in \Omega$ there are some $\delta_1, \delta_2 >0$ such that $\{ v \in V^{(\epsilon)}: |v-z| < \delta_2 \} \subset V^{(\epsilon)}_0$ whenever $\epsilon \in (0, \delta_1)$. 
	
	Then we say that $f^{(\epsilon)}$ converges to $f$ locally uniformly in $\Omega$ if for every $\sigma>0$ and every $z \in \Omega$ there are $\delta_1, \delta_2 > 0$ such that $|f(z) - f^{(\epsilon)}(v)| < \sigma$ for every $\epsilon \in (0, \delta_1)$ and every $v  \in V^{(\epsilon)}$ with $|v-z| < \delta_2$.
\end{definition}

For $k=1,\dots,6$, we denote by $\tau^{(\epsilon)}_k : V^{(\epsilon)} \to V^{(\epsilon)}$ the map that combinatorially shifts vertices in the $k$-th direction \[\tau^{(\epsilon)}_k(v) = v + \epsilon L_k \omega_k.\] For any subset $W \subset V^{\epsilon}$, a vertex $v \in W$ is an interior vertex of $W$ if all the six neighbouring vertices are in $W$. Write $W_{1}$ the set of interior vertices of $W$ and inductively $W_{r}$ the set of interior vertices of $W_{r-1}$.

Given a function $\eta : W \to \mathbb{C}$, the discrete directional derivative $\partial_k^{\epsilon} \eta : W_1 \to \mathbb{C}$ is defined as
\[
\partial_k^{\epsilon} \eta (v) = \frac{1}{\epsilon L_k} ( \eta( v + \epsilon L_k \omega_k) - \eta(v)).
\]
It is analogous to a directional derivative of a differentiable function $f: \Omega \to \mathbb{C}$, where we write 
\[
\partial_k f(z) := \lim_{t\to0} \frac{f(z+ t \omega_k) - f(z)}{t}  \quad \text{ for } k=1,2,\dots,6.
\]

\begin{definition}\label{def:cinfcon} 
	Let $n \in \mathbb{N}$ and suppose that $f$ is $C^{n}$-smooth. Then we say $f^{(\epsilon)}$ converges to $f$ in $C^{n}(\Omega)$ if for every sequence $k_1,\dots,k_j \in \{1,\dots,6\}$ with $j \leq n$ the functions $\partial_{k_j}\partial_{k_{j-1}}\dots\partial_{k_1} f^{(\epsilon)}$ converges to $\partial_{k_j}\partial_{k_{j-1}}\dots\partial_{k_1} f$ locally uniformly in $\Omega$. If this holds for all $n \in \mathbb{N}$, the convergence is said to be in $C^{\infty}(\Omega)$.
\end{definition}

It is known that  $C^{\infty}(\Omega)$-convergence has some nice properties. 

\begin{lemma}[\cite{Schramm1998}] \label{lem:hesch} Suppose that $f^{(\epsilon)}, g^{(\epsilon)}, h^{(\epsilon)}$ converge in $C^{\infty}(\Omega)$ to functions $f,g,h: \Omega \to \mathbb{C}$ and suppose that $h\neq 0$ in $\Omega$. Then the following convergences are in $C^{\infty}(\Omega)$
	\begin{enumerate}
		\item $f^{(\epsilon)} + g^{(\epsilon)} \to f +g$
		\item $f^{(\epsilon)} g^{(\epsilon)} \to f g$
		\item $1/h^{(\epsilon)} \to 1/h$
		\item if $h^{(\epsilon)}>0$, then $\sqrt{h^{(\epsilon)}} \to \sqrt{h}$
		\item $|h^{(\epsilon)}| \to |h|$
	\end{enumerate}
\end{lemma}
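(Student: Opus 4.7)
The plan is to prove each item by establishing discrete analogues of the Leibniz rule, quotient rule, and chain rule at the level of the directional difference operators $\partial_k^\epsilon$, and then iterating. A key preliminary observation is that if $f^{(\epsilon)} \to f$ in $C^\infty(\Omega)$, then the shifted sequence $f^{(\epsilon)} \circ \tau_k^\epsilon$ also converges to $f$ in $C^\infty(\Omega)$: this is because $\tau_k^\epsilon$ commutes with every $\partial_j^\epsilon$, and by the mean-value / triangle inequality argument, the locally uniform limit is unchanged under a shift by $\epsilon L_k \omega_k \to 0$. I will use this freely below.

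For (1), linearity of $\partial_k^\epsilon$ gives $\partial_k^\epsilon(f^{(\epsilon)} + g^{(\epsilon)}) = \partial_k^\epsilon f^{(\epsilon)} + \partial_k^\epsilon g^{(\epsilon)}$, so by induction on the order of the mixed discrete derivative, together with $\partial_k f + \partial_k g = \partial_k(f+g)$ on the smooth side, the result follows from the sum of two locally uniformly convergent sequences being locally uniformly convergent. For (2), the discrete Leibniz identity
\[
\partial_k^\epsilon(f^{(\epsilon)} g^{(\epsilon)})(v) = f^{(\epsilon)}(\tau_k^\epsilon v)\,\partial_k^\epsilon g^{(\epsilon)}(v) + g^{(\epsilon)}(v)\,\partial_k^\epsilon f^{(\epsilon)}(v)
\]
expresses a single $\partial_k^\epsilon$ applied to a product as a sum of products involving one fewer derivative on each factor (with one factor possibly shifted). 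Iterating this identity $n$ times reduces any $n$-fold discrete derivative of $f^{(\epsilon)} g^{(\epsilon)}$ to a finite sum of products of iterated discrete derivatives of $f^{(\epsilon)}$ and $g^{(\epsilon)}$ (evaluated on small shifts of $v$). Each summand converges locally uniformly to the matching summand in the smooth Leibniz expansion of $\partial_{k_n}\dots\partial_{k_1}(fg)$ by (1) together with the preliminary shift remark, giving (2).

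For (3), write
\[
\partial_k^\epsilon\!\left(\frac{1}{h^{(\epsilon)}}\right)(v) = -\frac{\partial_k^\epsilon h^{(\epsilon)}(v)}{h^{(\epsilon)}(v)\,h^{(\epsilon)}(\tau_k^\epsilon v)}.
\]
Since $h$ is nonvanishing on $\Omega$, on any compact subset $K \subset \Omega$ we have $|h| \geq c > 0$, and by locally uniform convergence $|h^{(\epsilon)}| \geq c/2$ on $K$ for small $\epsilon$. Hence $1/h^{(\epsilon)} \to 1/h$ locally uniformly in $C^0$, and the displayed formula combined with (2) (applied inductively to the factor $1/(h^{(\epsilon)} \cdot h^{(\epsilon)} \circ \tau_k^\epsilon)$) yields $C^\infty$-convergence by induction on the derivative order. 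For (4), the identity $\sqrt a - \sqrt b = (a-b)/(\sqrt a + \sqrt b)$ gives
\[
\partial_k^\epsilon \sqrt{h^{(\epsilon)}}(v) = \frac{\partial_k^\epsilon h^{(\epsilon)}(v)}{\sqrt{h^{(\epsilon)}(v)} + \sqrt{h^{(\epsilon)}(\tau_k^\epsilon v)}}\,,
\]
and since $\sqrt{h^{(\epsilon)}} \to \sqrt h$ is continuous and $h>0$ on $\Omega$, the denominator is bounded away from zero on compacts; one then argues as in (3). For (5), if $h$ is never zero on $\Omega$ write $|h|^2 = h\bar{h}$; since $\overline{h^{(\epsilon)}} \to \bar h$ in $C^\infty$ (complex conjugation commutes with $\partial_k^\epsilon$), (2) gives $|h^{(\epsilon)}|^2 \to |h|^2$ in $C^\infty$, and then (4) applied to the positive function $|h|^2$ yields $|h^{(\epsilon)}| \to |h|$ in $C^\infty$.

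The only mildly delicate point is the bookkeeping for the induction in (2): one must track that iterating the discrete Leibniz rule produces only finitely many terms, each a product of iterated $\partial^\epsilon$-derivatives of $f^{(\epsilon)}$ or $g^{(\epsilon)}$ composed with some shift $\tau_{k_{i_1}}^\epsilon \circ \cdots \circ \tau_{k_{i_r}}^\epsilon$, and that this finite-sum structure matches the smooth Leibniz expansion up to errors that vanish as $\epsilon \to 0$. This is the main technical obstacle, but it is purely combinatorial and the passage to the limit is justified by the preliminary shift remark.
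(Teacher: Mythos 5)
The paper does not prove this lemma at all: it is quoted verbatim from He--Schramm \cite{Schramm1998} as a black box, so there is no internal argument to compare against. Your proposal supplies a self-contained proof, and it is essentially the standard one (and in substance the one found in He--Schramm): the discrete Leibniz identity $\partial_k^\epsilon(f^{(\epsilon)}g^{(\epsilon)})(v)=f^{(\epsilon)}(\tau_k^\epsilon v)\,\partial_k^\epsilon g^{(\epsilon)}(v)+g^{(\epsilon)}(v)\,\partial_k^\epsilon f^{(\epsilon)}(v)$, the discrete quotient and square-root identities, the observation that shifts by $\epsilon L_k\omega_k$ preserve $C^\infty$-limits, and the reduction $|h|=\sqrt{h\bar h}$ using that real division by $\epsilon L_k$ commutes with conjugation. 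All the identities you write are correct, and the preliminary shift remark is justified exactly as you indicate. Two small points deserve tightening if this were written out in full. In (3), the induction should be stated for arbitrary sequences converging in $C^\infty$ to a nonvanishing limit (order $n-1$ hypothesis applied to $H^{(\epsilon)}:=h^{(\epsilon)}\,(h^{(\epsilon)}\circ\tau_k^\epsilon)\to h^2$), since the factor appearing in your displayed formula is the reciprocal of a different sequence than $h^{(\epsilon)}$ itself; as phrased, "apply (2) inductively" skates over this. In (5), $|h^{(\epsilon)}|^2$ need not be strictly positive on all of its domain, only near each point of $\Omega$ for $\epsilon$ small (by locally uniform convergence and $h\neq 0$); since the $C^\infty$-convergence statement is local, this suffices for invoking (4), but it should be said. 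Neither issue is a gap in the underlying argument --- the proof is correct, and it buys the reader an elementary verification of a result the paper only cites.
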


We consider cross ratios associated to edges. Denote by $X^{(\epsilon)}$ the cross ratios of the lattice $ T^{(\epsilon)}$ while denote by $\tilde{X}^{(\epsilon)}$ the cross ratios of $h^{(\epsilon)}$ in Proposition \ref{prop:ulrike}. Notice since $ T^{(\epsilon)}$ differs from $ T$ by scaling, we have $X^{(\epsilon)}=X$ independent of $\epsilon$. On the other hand $\log (\tilde{X}^{(\epsilon)}/X)$ is purely imaginary since $|\tilde{X}^{(\epsilon)}|/|X| =1$. For $k=1,2,3$, we define $s^{(\epsilon)}_k: V^{(\epsilon)}_{K} \to \mathbb{R}$
\[
s^{(\epsilon)}_k(v) := 	\frac{1}{\mathbf{i} \epsilon^2} \log \frac{\tilde{X}^{(\epsilon)}(e)}{X(e)}
\] 
where $e$ is the edge joining $v$ and $\tau^{(\epsilon)}_k (v)$. It plays the role of a discretization of the Schwarzian derivative (See \cite{Bobenko2003}).

Following the result by He-Schramm \cite{Schramm1998}, B\"{u}cking introduced contact transformation $Z_k: V^{(\epsilon)} \to SL(2,\mathbb{C})$ such that $Z_{k}(v)$ is the M\"{o}bius transformation mapping $0,\epsilon \omega_k L_k, \epsilon \omega_{k+1} L_{k+1}$ to $h^{(\epsilon)}(v),h^{(\epsilon)}(\tau_k(v)),h^{(\epsilon)}(\tau_{k+1}(v))$. In \cite[Section 5]{Bucking2018}, she proved the convergence of the contact transformations.

\begin{proposition}[\cite{Bucking2018}] \label{prop:schconv}
	Under the notations in Proposition \ref{prop:ulrike}, the mappings $h^{(\epsilon)}$ converge in $C^{\infty}(\Omega_K)$ to $h$. Furthermore, the discrete Schwarzian derivative in the $k$-th direction
		\begin{gather*}
	\lim_{\epsilon \to 0} s^{(\epsilon)}_1 = \frac{L_1}{2} \Re( \omega_2 \omega_3 S_h) \\
	\lim_{\epsilon \to 0} s^{(\epsilon)}_2 =- \frac{L_2}{2} \Re( \omega_1 \omega_3 S_h)\\
	\lim_{\epsilon \to 0} s^{(\epsilon)}_3 =  \frac{L_3}{2} \Re( \omega_1 \omega_2 S_h)
	\end{gather*}
	where $S_h$ is the classical Schwarzian derivative of $h$. For each $k=1,2,\dots,6$, the contact transformations	$Z_k$ converge to a complex analytic function $\mathcal{Z}$ in $C^{\infty}(\Omega)$ satisfying
	\[
	d\mathcal{Z} = \mathcal{Z} \left( \begin{array}{cc}
	0 & 1 \\ -\frac{S_h}{2} & 0
	\end{array}  \right) dz
	\] 
\end{proposition}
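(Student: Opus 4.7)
The plan is to bootstrap from the $C^{0}$-convergence in Proposition \ref{prop:ulrike} to $C^{\infty}$-convergence, and then extract convergence of the discrete Schwarzian and of the contact transformations by Taylor expansion. Since $h^{(\epsilon)}$ preserves shear coordinates with the regular lattice $T^{(\epsilon)}$, the quantities $\tilde X^{(\epsilon)}/X = e^{\mathbf{i}\epsilon^{2} s^{(\epsilon)}_{k}}$ are unimodular, so they encode discrete bending along each edge. The first step is to invoke Bücking's $C^{\infty}$-convergence \cite{Bucking2018}, whose mechanism I would summarize as follows: starting from the $C^{0}$-estimate $|h^{(\epsilon)}_{PL}-h|\leq C\epsilon$, one writes the discrete Cauchy–Riemann-type equations induced by the shear-preserving condition and the Delaunay condition, differentiates them combinatorially using the shift operators $\tau_{k}^{(\epsilon)}$, and applies discrete elliptic regularity on the regular triangular lattice to upgrade $C^{0}$ to $C^{n}$ for every $n$. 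All subsequent manipulations are then legitimate by Lemma \ref{lem:hesch}.

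Next, I would derive the three limits for $s^{(\epsilon)}_{k}$. Fix an edge $e$ from $v$ to $\tau_{k}^{(\epsilon)}(v)$ in direction $\omega_{k}$; the cross ratio $\tilde X^{(\epsilon)}(e)$ is a rational function of four values $h^{(\epsilon)}(v)$, $h^{(\epsilon)}(\tau_{k}^{(\epsilon)} v)$, and the two opposite vertices $h^{(\epsilon)}(\tau_{k-1}^{(\epsilon)}v)$, $h^{(\epsilon)}(\tau_{k+1}^{(\epsilon)}v)$. Replacing each $h^{(\epsilon)}$ by its Taylor expansion (valid by $C^{\infty}$-convergence) and using the classical identity that for a locally univalent $h$ the cross ratio of four points $h(z+\epsilon a_{j})$ differs from the cross ratio of the $a_{j}$ by
\begin{equation*}
1 + \tfrac{\epsilon^{2}}{6}\,S_{h}(z)\,(\text{symmetric polynomial in the }a_{j}) + O(\epsilon^{3}),
\end{equation*}
one reads off the three limits after substituting the lattice vectors $a_{j}\in\{0,L_{k}\omega_{k},-L_{k-1}\omega_{k-1},L_{k+1}\omega_{k+1}\}$. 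The combination that survives is exactly $\Re(\omega_{i}\omega_{j}S_{h})$ with the sign determined by the orientation of the two triangles adjacent to $e$; this produces the three displayed formulas. By Lemma \ref{lem:hesch} the convergence is locally uniform.

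For the contact transformations $Z_{k}$, I would write the map explicitly. By the uniqueness of M\"obius transformations sending three points to three points (Section \ref{sec:mobius}), $Z_{k}(v)$ is determined by the six values $h^{(\epsilon)}(v), h^{(\epsilon)}(\tau_{k}^{(\epsilon)}v), h^{(\epsilon)}(\tau_{k+1}^{(\epsilon)}v)$ and $0,\epsilon L_{k}\omega_{k},\epsilon L_{k+1}\omega_{k+1}$ via the explicit matrix formula from Section \ref{sec:discreteosc}. Expanding $h^{(\epsilon)}(\tau_{k}^{(\epsilon)}v) = h(v) + \epsilon L_{k}\omega_{k}h'(v) + \tfrac{(\epsilon L_{k}\omega_{k})^{2}}{2}h''(v) + o(\epsilon^{2})$ and similarly for the other shift, and collecting terms, yields pointwise in the entries of $Z_{k}$ the formula \eqref{eq:smoothmob} evaluated at $z=0$, up to a right multiplication by the translation matrix $\bigl(\begin{smallmatrix}1 & v\\ 0 & 1\end{smallmatrix}\bigr)$. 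Define $\mathcal{Z}(v):=A_{h}(v)\bigl(\begin{smallmatrix}1 & v\\ 0 & 1\end{smallmatrix}\bigr)$. By Lemma \ref{lem:hesch} and $C^{\infty}$-convergence of $h^{(\epsilon)}$, we obtain $Z_{k}\to\mathcal{Z}$ in $C^{\infty}(\Omega)$; the limit is independent of $k$ because the three entries defining it are the 2-jet data of $h$, independent of which two lattice directions are used.

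Finally, to verify the Maurer–Cartan identity, I would compute directly: from \eqref{eq:mauerh},
\begin{equation*}
\mathcal{Z}^{-1}d\mathcal{Z} \;=\; \begin{pmatrix}1 & -v\\ 0 & 1\end{pmatrix}\!\Bigl(-\tfrac{S_{h}}{2}\bigr)\!\begin{pmatrix}v & -v^{2}\\ 1 & -v\end{pmatrix}\!\begin{pmatrix}1 & v\\ 0 & 1\end{pmatrix}dv \;+\; \begin{pmatrix}0 & 1\\ 0 & 0\end{pmatrix}dv,
\end{equation*}
and the first term collapses to $-\tfrac{S_{h}}{2}\bigl(\begin{smallmatrix}0 & 0\\ 1 & 0\end{smallmatrix}\bigr)dv$, giving the stated ODE. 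The main obstacle is the first step: upgrading the convergence to $C^{\infty}$ requires the full strength of Bücking's argument, so without that input the subsequent steps only produce $C^{0}$-convergence of the Schwarzians, which is not enough to commute with the contact transformations in Lemma \ref{lem:hesch}.
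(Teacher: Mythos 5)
This proposition is stated with the citation \cite{Bucking2018}: the paper offers no proof of its own (it only remarks that the convergence of the contact transformations is proved in Section 5 of that reference), so the benchmark here is the citation itself. Measured against that, your proposal is not a proof but a plausible reconstruction whose essential step is still the citation: you invoke B\"ucking's theorem for the $C^{\infty}$-convergence of $h^{(\epsilon)}$, and your sketch of how that step would go (``discrete Cauchy--Riemann equations plus discrete elliptic regularity'') is a gesture, not an argument -- as you yourself concede in the last sentence. Note also that this reverses the logical order of the actual proof in the reference: there the convergence of the M\"obius-invariant data (the discrete Schwarzians $s_k^{(\epsilon)}$) and of the contact transformations is established first, and the $C^{\infty}$-convergence of $h^{(\epsilon)}$ is deduced from it, rather than the $s_k$ limits being read off afterwards by Taylor expansion. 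If you do grant the $C^{\infty}$-convergence as an input, your derivation of the $s_k$ limits is workable in principle, but it needs more care than stated: the expansion must be applied to $h^{(\epsilon)}$, not to $h$ restricted to the lattice, so you must convert the cross-ratio ratio into exact combinations of discrete derivatives (up to third order) before passing to the limit; and your list of the four cross-ratio points has an index/sign slip (for the edge in direction $\omega_k$ the two apexes sit at $v+\epsilon L_{k+1}\omega_{k+1}$ and $v+\epsilon L_{k-1}\omega_{k-1}$ with indices taken mod $6$, i.e.\ $v-\epsilon L_{k+2}\omega_{k+2}$), which matters since the whole content of the three displayed formulas is the constants and signs.

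The part you carry out completely -- identifying the limit of the contact transformations as $\mathcal{Z}(v)=A_h(v)\bigl(\begin{smallmatrix}1 & v\\ 0 & 1\end{smallmatrix}\bigr)$ via the composition rule \eqref{eq:composition} and verifying the Maurer--Cartan equation by conjugating \eqref{eq:mauerh} with the translation matrix -- is correct, but it essentially duplicates the computation the paper performs in Proposition \ref{prop:distosmoothosc}, where this proposition is used as input to conclude $A^{(\epsilon)}\to A_h$. So the net assessment: the routine consequences are fine, but the analytic core of the statement (uniform convergence of the discrete Schwarzians of the circle patterns $h^{(\epsilon)}$ and the $C^{\infty}$-convergence itself) is exactly what you have not proved, and for the purposes of this paper the honest treatment is the one the author takes, namely to quote \cite{Bucking2018}.
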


As a special case, when $T$ is the regular triangular lattice consisting of equilateral triangles of unit lengths, then $s_k$ converges to $ - \Re(e^{\mathbf{i}\frac{2k \pi}{3}} S_h)/2$. The proposition immediately implies the convergence of discrete osculating M\"{o}bius transformations.

\begin{proposition}\label{prop:distosmoothosc} Under the notations in Proposition \ref{prop:ulrike}, discrete osculating M\"{o}bius transformations  $A^{(\epsilon)}$ from the regular lattice $T^{(\epsilon)}_K$ to the realization $h^{(\epsilon)}$ converge to the smooth osculating M\"{o}bius transformation $A_h$ in $C^{\infty}(\Omega_K)$.
\end{proposition}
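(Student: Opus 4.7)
The plan is to reduce the convergence of $A^{(\epsilon)}$ to that of the contact transformations $Z_k^{(\epsilon)}$, whose $C^\infty$ convergence is supplied by Proposition \ref{prop:schconv}, and then identify the limit with $A_h$ via the composition rule \eqref{eq:composition} for osculating M\"{o}bius transformations.

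First I would note that on each triangular face of $T^{(\epsilon)}_K$ with vertices $v, \tau_k^{(\epsilon)}(v), \tau_{k+1}^{(\epsilon)}(v)$, both $A^{(\epsilon)}$ and $Z_k^{(\epsilon)}(v)$ send triples of source points to the same image triple $(h^{(\epsilon)}(v), h^{(\epsilon)}(\tau_k v), h^{(\epsilon)}(\tau_{k+1} v))$. The two source triples differ only by the translation $z \mapsto z-v$, which in $SL(2,\mathbb{C})$ is the matrix $\begin{pmatrix} 1 & -v \\ 0 & 1 \end{pmatrix}$. By the uniqueness of the M\"{o}bius transformation on three points, this yields the face-wise factorization
\begin{equation*}
A^{(\epsilon)} = Z_k^{(\epsilon)}(v) \cdot \begin{pmatrix} 1 & -v \\ 0 & 1 \end{pmatrix}
\end{equation*}
(up to a sign that must be chosen coherently). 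The translation factor is polynomial in $v$ and therefore trivially $C^\infty$-smooth, while Proposition \ref{prop:schconv} gives $C^\infty(\Omega_K)$-convergence $Z_k^{(\epsilon)} \to \mathcal{Z}$. Applying Lemma \ref{lem:hesch} entry by entry then delivers
\begin{equation*}
A^{(\epsilon)} \longrightarrow \mathcal{A}(z) := \mathcal{Z}(z) \begin{pmatrix} 1 & -z \\ 0 & 1 \end{pmatrix} \qquad \text{in } C^\infty(\Omega_K).
\end{equation*}

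To identify $\mathcal{A}$ with $A_h$, I would argue as follows. Since $Z_k^{(\epsilon)}(v)$ is the M\"{o}bius transformation matching the values $(h^{(\epsilon)}(v), h^{(\epsilon)}(\tau_k v), h^{(\epsilon)}(\tau_{k+1} v))$ at the vertices of a shrinking triangle based at $0$, its limit $\mathcal{Z}(v)$ must coincide with the 2-jet of $h\circ T_v$ at $0$, where $T_v(z) = z + v$; that is, $\mathcal{Z}(v) = A_{h\circ T_v}(0)$. A direct computation from \eqref{eq:smoothmob} shows the translation has constant osculating transformation $A_{T_v} \equiv \begin{pmatrix} 1 & v \\ 0 & 1 \end{pmatrix}$, so the composition rule \eqref{eq:composition} gives $\mathcal{Z}(v) = A_h(v)\cdot A_{T_v}$. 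Right-multiplying by $\begin{pmatrix} 1 & -v \\ 0 & 1 \end{pmatrix} = A_{T_v}^{-1}$ yields exactly $\mathcal{A}(v) = A_h(v)$, as desired. As a sanity check, one can also verify directly that the pull-back by $\mathcal{A}$ of the Maurer-Cartan form matches \eqref{eq:mauerh}, using the ODE $\mathcal{Z}^{-1}d\mathcal{Z} = \left(\begin{smallmatrix} 0 & 1 \\ -S_h/2 & 0 \end{smallmatrix}\right)dz$ from Proposition \ref{prop:schconv}.

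The main obstacle I anticipate is reconciling the sign conventions in the lifts to $SL(2,\mathbb{C})$: the coherent lift of $A^{(\epsilon)}$ from Definition \ref{def:osculatinglift} and the lift of $Z_k^{(\epsilon)}$ chosen by B\"{u}cking are each defined only up to a global $\pm 1$, so one must check that a single normalization (say at one central face) propagates consistently across $T^{(\epsilon)}_K$ and reproduces the smooth lift of $A_h$ rather than $-A_h$. Because $\Omega_K$ is simply connected and all relevant quantities vary continuously with $\epsilon$, and because both conventions agree in the trivial case $h^{(\epsilon)} = \mathrm{id}$ (where $A^{(\epsilon)} = Z_k^{(\epsilon)}\cdot \mathrm{transl.} = I$), this sign consistency should follow from a continuity argument once the central face is pinned down.
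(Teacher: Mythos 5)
Your factorization $A^{(\epsilon)}_{v\,\tau_k(v)\,\tau_{k+1}(v)} = Z^{(\epsilon)}_k(v)\,B^{(\epsilon)}(v)^{-1}$ with $B^{(\epsilon)}(v)=\left(\begin{smallmatrix}1 & v\\ 0 & 1\end{smallmatrix}\right)$, followed by Proposition \ref{prop:schconv} and Lemma \ref{lem:hesch}, is exactly the paper's argument and correctly gives $A^{(\epsilon)}\to\mathcal{A}=\mathcal{Z}B^{-1}$ in $C^{\infty}(\Omega_K)$. The gap is in the identification $\mathcal{A}=A_h$. Your primary route hinges on the assertion that $\mathcal{Z}(v)=A_{h\circ T_v}(0)$, i.e.\ that the limit of the contact transformations matches the 2-jet of $h$ at $v$. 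That statement is not contained in Proposition \ref{prop:schconv} as quoted, which characterizes $\mathcal{Z}$ only through the ODE $d\mathcal{Z}=\mathcal{Z}\left(\begin{smallmatrix}0&1\\ -S_h/2&0\end{smallmatrix}\right)dz$, hence only up to a constant left factor in $SL(2,\mathbb{C})$; asserting the 2-jet property amounts to assuming precisely the normalization you need to prove. (It could be established by expanding the explicit three-point matrix for $Z^{(\epsilon)}_k(v)$ and proving convergence of the second divided differences of $h^{(\epsilon)}$, but you do not carry this out.) Your Maurer--Cartan ``sanity check'' has the same ambiguity: equality of the forms yields only $\mathcal{A}=C\,A_h$ for some constant $C\in SL(2,\mathbb{C})$ and cannot by itself exclude $C\neq\pm I$.

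The missing normalization is cheap, and it is how the paper closes the argument: for each $z\in\Omega_K$ the M\"obius transformation $\mathcal{A}(z)$ maps $z$ to $h(z)$, because $A^{(\epsilon)}$ on a face near $z$ maps a lattice vertex $v$ (with $v\to z$) to $h^{(\epsilon)}(v)\to h(z)$ and both convergences are locally uniform; since $A_h(z)$ also maps $z$ to $h(z)$, the constant $C$ fixes every point of the open set $h(\Omega_K)$ and hence $C=\pm I$, so $\mathcal{A}=A_h$ up to the global sign inherent in the lift. Note that only this 0-jet property is needed, not the full 2-jet matching. With this addition (and your sign-coherence remark, which is unproblematic since $\Omega_K$ is simply connected), your proof coincides with the paper's.
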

\begin{proof}
Define $B^{(\epsilon)}:V^{(\epsilon)}_K \to SL(2,\mathbb{C})$ to be the translation matrix
\[
B^{(\epsilon)}(v) = \left( \begin{array}{cc}
1 & z_v \\ 0 & 1
\end{array} \right)
\]
where $z_v$ is the complex coordinate of $v$ in the lattice $T^{(\epsilon)}_{K}$. Then we have the following relation between contact transformations and osculating M\"{o}bius transformations
\begin{align*}
Z^{(\epsilon)}_0(v) = A^{(\epsilon)}_{v \tau_0(v) \tau_1(v)} B^{(\epsilon)}(v)
\end{align*}
Notice that $B^{(\epsilon)}$ converges in $C^{\infty}(\Omega)$ to $B(z) = \left( \begin{array}{cc}
1 & z \\ 0 & 1
\end{array} \right)$. Thus $A^{(\epsilon)}$ converges to $\tilde{A} = \mathcal{Z} B^{-1}$ and satisfies
\begin{align*} 
\tilde{A}^{-1} d\tilde{A} &= B \mathcal{Z}^{-1} d\mathcal{Z}  B^{-1} + dB B^{-1}  \\
&= -\frac{S_h(z)}{2}\left( \begin{array}{cc}
z & -z^2 \\ 1 & -z
\end{array}\right) dz \\ 
&= A_h^{-1} dA_h 
\end{align*}
It implies $\tilde{A} = C A_h$ for some constant $C \in SL(2, \mathbb{C})$. For every $z$ we further know that $\tilde{A}(z)$ and $A_h(z)$ are M\"{o}bius transformations mapping $z$ to $h(z)$. Hence $C= \pm I$ and $A^{(\epsilon)}$ converges to $\tilde{A}=A_h$.
\end{proof}

The convergence of osculating M\"{o}bius transformations implies the convergence of discrete CMC-1 surfaces.

\begin{theorem}\label{thm:converge}
	Suppose $f:\Omega \to \mathbb{H}^3$ is a conformal immersion of an umbilic-free CMC-1 surface. Let  $K \subset \Omega$ be a compact set which is the closure of its simply connected interior domain $\Omega_K := \mbox{Int}(K)$. Consider a triangular lattice $T$ with strictly acute angles. For each $\epsilon>0$, let $T^{(\epsilon)}_K$ be a maximal subcomplex of scaled lattice $T^{(\epsilon)}$ whose support is contained in $K$ and is homeomorphic to a closed disk. 
	
	Then for $\epsilon >0$ small enough (depending on $K,f,T$), there exists discrete CMC-1 surface $f^{(\epsilon)}: (V^{(\epsilon)}_K)^* \to  \mathbb{H}^3$ and we have $f^{(\epsilon)}$ converging to $f$ in $C^{\infty}(\Omega_K)$ as $\epsilon \to 0$.
\end{theorem}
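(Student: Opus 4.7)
The plan is to lift the smooth Weierstrass correspondence (Proposition \ref{pro:osccmc}) to the discrete level using Proposition \ref{prop:ulrike}, then propagate convergence through Proposition \ref{prop:distosmoothosc} and the composition rule for osculating M\"obius transformations. Since $f$ is umbilic-free, Proposition \ref{pro:osccmc} provides locally univalent holomorphic functions $g, \tilde{g}: \Omega \to \mathbb{C}$ such that $f = A A^{*}$ with $A = A_{\tilde{g}} A_{g}^{-1}$. These are the two "Weierstrass factors" that will be discretized separately.

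First I would apply Proposition \ref{prop:ulrike} \emph{twice}, once to $g|_K$ and once to $\tilde{g}|_K$, with the same compact $K$ and the same subcomplex $T^{(\epsilon)}_K$. For $\epsilon$ small enough this yields Delaunay circle patterns $g^{(\epsilon)}, \tilde{g}^{(\epsilon)}: V^{(\epsilon)}_K \to \mathbb{C}$, each sharing shear coordinates with $T^{(\epsilon)}_K$. Since $\Re \log X = \Re \log X$ is an equivalence relation on cross ratios, $g^{(\epsilon)}$ and $\tilde{g}^{(\epsilon)}$ automatically share the same shear coordinates with each other. By Theorem \ref{thm:horo}, the osculating M\"obius transformation $A^{(\epsilon)}: F(T^{(\epsilon)}_K) \to SL(2,\mathbb{C})$ from $g^{(\epsilon)}$ to $\tilde{g}^{(\epsilon)}$, taken in its coherent lift, defines a discrete CMC-1 surface
\[
f^{(\epsilon)} := A^{(\epsilon)} (A^{(\epsilon)})^{*}.
\]

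For the convergence, I would use the composition rule (the corollary following Proposition \ref{prop:transit}). Let $B^{(\epsilon)}$ and $\tilde{B}^{(\epsilon)}$ be the coherent lifts of the osculating M\"obius transformations from $T^{(\epsilon)}_K$ to $g^{(\epsilon)}$ and $\tilde{g}^{(\epsilon)}$, respectively. Then, up to a single global sign, $A^{(\epsilon)} = \tilde{B}^{(\epsilon)} (B^{(\epsilon)})^{-1}$. Proposition \ref{prop:distosmoothosc} gives $B^{(\epsilon)} \to A_{g}$ and $\tilde{B}^{(\epsilon)} \to A_{\tilde{g}}$ in $C^{\infty}(\Omega_K)$. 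By Lemma \ref{lem:hesch} (closure of $C^{\infty}$-convergence under products and inverses), it follows that $A^{(\epsilon)} \to A_{\tilde{g}} A_{g}^{-1} = A$ in $C^{\infty}(\Omega_K)$, and hence
\[
f^{(\epsilon)} = A^{(\epsilon)} (A^{(\epsilon)})^{*} \;\longrightarrow\; A A^{*} = f \quad \text{in } C^{\infty}(\Omega_K).
\]

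The main obstacle I anticipate is keeping the sign conventions consistent: Proposition \ref{prop:distosmoothosc} applies to each factor individually, but to compose them into $A^{(\epsilon)}$ as matrices in $SL(2,\mathbb{C})$ (rather than in $PSL(2,\mathbb{C})$) one must verify that the coherent lift of the composition matches the product of the coherent lifts, up to a single overall sign that can then be fixed at one base face. This is precisely what the Delaunay hypothesis guarantees (via the existence proposition following Definition \ref{def:osculatinglift}): all three circle patterns $T^{(\epsilon)}_K$, $g^{(\epsilon)}$, and $\tilde{g}^{(\epsilon)}$ are Delaunay on a simply connected domain, so the coherent lifts exist and the only remaining ambiguity is a global $\pm 1$, which may be pinned down by continuity. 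A minor secondary check is that Proposition \ref{prop:ulrike} can be applied to $\tilde{g}$ even though its image is $g(\Omega)$ rather than $\Omega$ directly; but since the hypothesis is merely local univalence on the source domain, this poses no difficulty.
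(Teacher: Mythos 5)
Your proposal is correct and follows essentially the same route as the paper: decompose $f=AA^{*}$ with $A=A_{\tilde{g}}A_{g}^{-1}$ via the smooth Weierstrass data, discretize $g$ and $\tilde{g}$ separately by Proposition \ref{prop:ulrike} to get Delaunay patterns sharing shear coordinates, invoke Theorem \ref{thm:horo} for the discrete surface, and pass to the limit using Proposition \ref{prop:distosmoothosc} together with Lemma \ref{lem:hesch}. Your extra remarks on coherent lifts are sound but inessential, since $f^{(\epsilon)}=A^{(\epsilon)}(A^{(\epsilon)})^{*}$ is unaffected by the overall sign.
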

\begin{proof}
	Proposition \ref{prop:smoothoscu} implies that there exists locally univalent functions $g,\tilde{g}:\Omega \to \mathbb{C}$ such that
	\[
	f = A A^*
	\]
	where $A=A_{\tilde{g}} A_g^{-1}$ is the osculating M\"{o}bius transformation from $g$ to $\tilde{g}$. For $\epsilon > 0$ small, there exists Delaunay circle patterns $g^{(\epsilon)}$ and $\tilde{g}^{(\epsilon)}$ defined on $T^{(\epsilon)}_{K}$ sharing the same modulus of cross ratios that converge to $g$ and $\tilde{g}$ respectively. They induce a discrete CMC-1 surface defined on the dual graph of $T^{(\epsilon)}_{K}$ via 
	\[
	f^{(\epsilon)} = A^{(\epsilon)} (A^{(\epsilon)})^*
	\]
	where $A^{(\epsilon)} = A_{\tilde{g}^{(\epsilon)}} A_{g^{(\epsilon)}}^{-1}$ is the discrete osculating M\"{o}bius transformation from   $g^{(\epsilon)}$ to $\tilde{g}^{(\epsilon)}$. By Proposition \ref{prop:distosmoothosc} and Lemma \ref{lem:hesch}, $A^{(\epsilon)}$ converges in $C^{\infty}(\Omega)$ to $A_{\tilde{g}} A_{g}^{-1}=A$, which implies $f^{(\epsilon)}$ converges to $f$.
\end{proof}

Combining with Proposition \ref{prop:schconv}, we obtain convergence to the Hopf differential as well.
\begin{corollary}
	Under the notation of Theorem \ref{thm:converge}, the Hopf differential $Q$ of $f$ satisfies for $k=1,2,3,\dots 6$
	\begin{align*}
	L_k \Re( \omega_{k+1} \omega_{k+2} Q) = \lim_{\epsilon \to 0} \frac{\ell_k^{(\epsilon)}}{\epsilon^2} \tan\frac{\alpha_k^{(\epsilon)}}{2}
	\end{align*}
	where $\ell_k,\alpha_k: V^{(\epsilon)}_{K} \to \mathbb{R}$ are the edge lengths and the dihedral angles in the $k$-th direction, i.e.
	\[
	\ell^{(\epsilon)}_k(v) := 	\ell^{(\epsilon)}_e, \quad \alpha^{(\epsilon)}_k(v) :=  \alpha^{(\epsilon)}_e
	\] 
	and $e$ is the edge joining $v$ and $\tau^{\epsilon}_k v$.
\end{corollary}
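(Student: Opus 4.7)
The plan is to express $\frac{1}{\epsilon^{2}}\ell^{(\epsilon)}_{k}\tan\frac{\alpha^{(\epsilon)}_{k}}{2}$ as a difference of two discrete Schwarzian derivatives and then invoke Proposition~\ref{prop:schconv}. By Theorem~\ref{thm:converge}, the discrete CMC-1 surface $f^{(\epsilon)}$ is realized as $A^{(\epsilon)}(A^{(\epsilon)})^{*}$, where $A^{(\epsilon)}$ is the osculating M\"{o}bius transformation from a Delaunay circle pattern $g^{(\epsilon)}$ to another Delaunay circle pattern $\tilde{g}^{(\epsilon)}$. Both circle patterns share the shear coordinates of the lattice $T^{(\epsilon)}_{K}$ and converge in $C^{\infty}(\Omega_{K})$ to locally univalent $g,\tilde{g}:\Omega\to\mathbb{C}$ with Hopf differential $Q=(S_{g}-S_{\tilde{g}})/2$ by Proposition~\ref{pro:osccmc}. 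Write $X^{g,(\epsilon)},X^{\tilde{g},(\epsilon)}$ for their cross ratios and $X$ for the (constant) cross ratio of the lattice.

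The key geometric input, lifted straight from the proof of Theorem~\ref{thm:horo}, is the edgewise identity
\[
\ell^{(\epsilon)}_{e}\tan\frac{\alpha^{(\epsilon)}_{e}}{2}=\Arg X^{g,(\epsilon)}(e)-\Arg X^{\tilde{g},(\epsilon)}(e),
\]
obtained by combining \eqref{eq:theta} and \eqref{eq:ell} applied to the pair $(g^{(\epsilon)},\tilde{g}^{(\epsilon)})$. Because both $X^{g,(\epsilon)}/X$ and $X^{\tilde{g},(\epsilon)}/X$ are unimodular (shear coordinates agree with those of the lattice), I can insert and subtract $\Arg X(e)$ and divide by $\epsilon^{2}$ to recast this as
\[
\frac{1}{\epsilon^{2}}\ell^{(\epsilon)}_{k}\tan\frac{\alpha^{(\epsilon)}_{k}}{2}=s^{g,(\epsilon)}_{k}-s^{\tilde{g},(\epsilon)}_{k},
\]
where $s^{g,(\epsilon)}_{k},s^{\tilde{g},(\epsilon)}_{k}$ are the discrete Schwarzian derivatives in the $k$-th direction of the two circle patterns relative to the lattice.

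Finally, Proposition~\ref{prop:schconv} applied separately to $g^{(\epsilon)}$ and $\tilde{g}^{(\epsilon)}$ produces the locally uniform limits $\tfrac{L_{k}}{2}\Re(\omega_{k+1}\omega_{k+2}S_{g})$ and $\tfrac{L_{k}}{2}\Re(\omega_{k+1}\omega_{k+2}S_{\tilde{g}})$, and subtracting yields $L_{k}\Re(\omega_{k+1}\omega_{k+2}Q)$, which is the assertion. I do not expect a genuinely hard step: the delicate analytic ingredient---locally uniform convergence of the discrete Schwarzian derivative to its smooth counterpart---is already packaged in Proposition~\ref{prop:schconv}, and the identity relating $\ell\tan(\alpha/2)$ to $\Arg X-\Arg\tilde{X}$ is already in hand. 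The only bookkeeping is that the proposition is stated for $k=1,2,3$ while the claim ranges over $k=1,\dots,6$; this follows from $\omega_{k+3}=-\omega_{k}$, $L_{k+3}=L_{k}$, and the fact that $\ell_{e}$ and $\alpha_{e}$ depend only on the unoriented edge, so the two sides of the claim are simultaneously invariant under $k\mapsto k+3$.
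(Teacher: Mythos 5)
Your argument is correct and is exactly the route the paper intends (the paper leaves the corollary as an immediate consequence of Proposition \ref{prop:schconv}): the identity $\ell\tan\frac{\alpha}{2}=\Arg X-\Arg\tilde X$ from the proof of Theorem \ref{thm:horo} rewrites $\frac{1}{\epsilon^2}\ell_k^{(\epsilon)}\tan\frac{\alpha_k^{(\epsilon)}}{2}$ as the difference of the discrete Schwarzians of $g^{(\epsilon)}$ and $\tilde g^{(\epsilon)}$ relative to the lattice, and Proposition \ref{prop:schconv} together with $Q=(S_g-S_{\tilde g})/2$ gives the limit, with the extension from $k=1,2,3$ to $k=1,\dots,6$ handled by $\omega_{k+3}=-\omega_k$, $L_{k+3}=L_k$ as you note.
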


\section{Circle patterns with the same intersection angles} \label{sec:circarg}

Previous sections focus on circle patterns with the same modulus of cross ratios, i.e. $|X|= |\tilde{X}|$. In fact, similar results hold for circle patterns with the same intersection angles, i.e. $\Arg X = \Arg \tilde{X}$. The analogues are considered in this section. As a replacement of horospherical nets in $\mathbb{H}^3$, we consider equidistant nets. (See section \ref{sec:umbilic} for the definition of equidistants.)
\begin{definition}\label{def:equidistant}
	An equidistant net is a realization $f:V^{*} \to \mathbb{H}^{3}$ of a dual mesh such 
	that
	\begin{enumerate}
		\item  Each vertex is associated with an equidistant that contains the vertex and the neighboring vertices. 
		\item Every edge is realized as a circular arc in the intersection of the two neighboring equidistants. 
		\item For each face of $f$, the equidistants associated to the vertices of the face intersect at a common vertex on $\partial\mathbb{H}^3$. It defines a circle pattern $\tilde{z}:V \to \mathbb{C} \cup \{\infty\}$ regarded as the hyperbolic Gauss map.
	\end{enumerate} 
\end{definition}

In the case of quadrilateral meshes, equidistant nets resemble asymptotic parametrizations in Euclidean space \cite{Bobenko1999} while horospherical nets resemble curvature line parametrizations. Unlike horospherical nets, integrated mean curvature is not defined on equidistant nets. However the assumptions on equidistant nets are strong enough to deduce that they correspond to a pair of circle patterns sharing the same intersection angles. The following is an analogue of Theorem \ref{thm:horo}.
\begin{theorem}\label{thm:equid}
		Given two Delaunay circle patterns $z,\tilde{z}:V\to \mathbb{C}$ with cross ratios $X,\tilde{X}$ such that $\Arg X = \Arg \tilde{X}$. Let $A:V^{*} \to SL(2,\mathbb{C})$ be the osculating M\"{o}bius transformation from $z$ to $\tilde{z}$. Then the realization $f:V^{*} \to \mathbb{H}^3$ of the dual graph 
	given by
	\[
	f:= A A^{*}
	\]
	is an equidistant net with hyperbolic Gauss map $\tilde{z}$.

	Conversely, suppose $f:V^{*} \to \mathbb{H}^3$ is an equidistant net with hyperbolic Gauss map $\tilde{z}:V \to \mathbb{C}$. Then there exists a circle pattern $z$ such that $f=A A^{*}$ and $A$ is the osculating M\"{o}bius transformation from $z$ to $\tilde{z}$. The cross ratios $X,\tilde{X}$ of $z,\tilde{z}$ satisfy $\Arg X = \Arg \tilde{X}$.	
\end{theorem}
\begin{proof}
	Let $A:F \to SL(2,\mathbb{C})$ be the osculating M\"{o}bius transformations mapping $z$ to $\tilde{z}$. Pick a dual vertex $\{ijk\}\in F=V^{*}$. Then there exists a unique equidistant passing through $f_{ijk}$ and intersect $\partial \mathbb{H}^3$ at the circumcircle of  $\tilde{z}_i,\tilde{z}_j,\tilde{z}_k$. We claim that this equidistant contains the neighbouring vertices of $f_{ijk}$ as well. Let $\{jki\}$ be a neighbouring dual vertex. Since $X_{ij}/\tilde{X}_{ij}$ is real valued, the transition matrix $A_{jli}A^{-1}_{ijk}$ is a scaling with fixed points at $\tilde{z}_i$ and $\tilde{z}_j$. Hence $f_{jki}$ lies on the circular arc through $f_{ijk}$,  $\tilde{z}_i$ and $\tilde{z}_j$ (not necessary to be a geodesic). In particular $f_{jki}$ lies on the equidistant through $f_{ijk}, \tilde{z}_i, \tilde{z}_j$ and $\tilde{z}_k$.
	
	Conversely, given an equidistant net $f$ with Gauss map $z$. We define $\eta$ as in Equation \eqref{eq:eta} with real eigenvalues $\lambda$ such that $\eta_{ij}$ is a scaling sending $f_{ijk}$ to $f_{jil}$ with fixed points $\tilde{z}_i$ and $\tilde{z}_j$. Notice that $\eta_{ij}=\eta^{-1}_{ji}$. We claim that for every primal vertex $i$ with neighboring primal vertices denoted as $v_0,v_1,v_2,\dots,v_s=v_0$, we have
	\[
	\prod_{j=1}^s \eta_{ij} = I.
	\]
	To see this, observe that
		\begin{gather}
(\prod_{j=1}^s \eta_{ij}) \left( \begin{array}{c}
\tilde{z}_i \\ 1 \end{array} \right)  = (\prod_{j=1}^s \lambda_{ij}) \left( \begin{array}{c}
\tilde{z}_i \\ 1 \end{array} \right)  \label{eq:etaprod1} \\
	(\prod_{j=1}^s \eta_{ij}) f_{i01} (\prod_{j=1}^s \eta_{ij})^* = f_{i01}  \label{eq:etaprod2}
	\end{gather} 
	Equation \eqref{eq:etaprod2} implies $\prod_j \eta_{ij}$ is conjugate to some element in $SU(2,\mathbb{C})$. Equation $\eqref{eq:etaprod1}$ implies $\prod_j \eta_{ij}$ has positive real eigenvalue $(\prod_j \lambda)$. Thus we deduce that the eigenvalue is $1$. It implies that $\prod_j \eta_{ij}$ is the identity and hence there exists $A:F \to SL(2,\mathbb{C})$ such that \[
		\eta_{ij}=A_{jil}A_{ijk}^{-1}.
	\]
	Applying $A^{-1}$ to $\tilde{z}$, we obtain a new realization $z$ with cross ratios $\tilde{X}$ satisfying $X/\tilde{X} = \lambda$. Since $\lambda$ is positively real, we have $\Arg X = \Arg \tilde{X}$.
\end{proof}

Following Corollary \ref{cor:familyofx}, equidistant nets also arise from the discrete Toda-type equation. 

As an analogue of Theorem \ref{thm:converge}, smooth CMC-1 surfaces can be approximated by equidistant nets. The proof can be carried over analogously, by establishing the convergence of discrete osculating M\"{o}bius transformations. For circle patterns induced from circle packings, the convergence holds as a result of He-Schramm \cite{Schramm1998}. 

\section{Minimal surfaces in Euclidean space}\label{sec:minimal}

Smooth minimal surfaces in $\mathbb{R}^3$ share the same holomorphic data as CMC-1 surfaces in $\mathbb{H}^3$. However in the discrete theory,  the holomorphic data no longer remains the same. In the section, we discuss the relation to discrete minimal surfaces in $\mathbb{R}^3$. 

In \cite{Lam2017,Lam2018}, it was shown that every discrete minimal surface in $\mathbb{R}^3$ corresponds to an infinitesimal deformation of a circle pattern. It can be regarded as a pair of circle patterns that are infinitesimally close to each other. Equivalently, the holomorphic data involves cross ratio $X$ and its first-order change $\dot{X}$.  Previous construction is based on reciprocal parallel meshes from the rigidity theory \cite{Lam2015}. The approach developed in this article provides another perspective of the construction in terms of osculating M\"{o}bius vector fields.

A M\"{o}bius vector field is generated by an infinitesimal M\"{o}bius transformation. It is a quadratic vector field 
\[
(-\gamma z^2 +2\alpha z +\beta) \frac{\partial}{\partial z}
\] corresponding to an element \[\left(\begin{array}{cc} \alpha & \beta\\ \gamma &-\alpha
\end{array}\right) \in sl(2,\mathbb{C}).\] Analogous to osculating M\"{o}bius transformations, every holomorphic vector field $h(z) \frac{ \partial}{\partial z}$ on a domain $\Omega$ is associated with an \emph{osculating M\"{o}bius vector field} $a:\Omega \to sl(2,\mathbb{C})$ that coincides with the 2-jet of $h$ at every point in $\Omega$
\begin{align*}
	h(z) &= -\gamma z^2 +2\alpha z +\beta\\
		h'(z) &= \partial_w (-\gamma w^2 +2\alpha w +\beta) |_{w=z}\\
			h''(z) &=\partial^2_w (-\gamma w^2 +2\alpha w +\beta) |_{w=z}
\end{align*}
which gives
\[
a(z) = \left(
\begin{array}{cc}
\frac{1}{2} \left(h'(z)-z h''(z)\right) & \frac{1}{2} \left(z^2 h''(z)-2 z h'(z)+2
h(z)\right) \\
-\frac{1}{2} h''(z) & \frac{1}{2} \left(z h''(z)-h'(z)\right) \\
\end{array}
\right)
\]
and
\[
da = -\frac{h'''(z)}{2}  \left(
\begin{array}{cc}
z & -z^2 \\
1 & -z \\
\end{array}
\right) dz.
\]
The coefficient $h'''$ is the Schwarzian derivative of a holomorphic vector field. It vanishes identically if and only if the vector field is globally generated by a M\"{o}bius transformation. Furthermore, the osculating M\"{o}bius vector field defines a holomorphic null curve in $sl(2,\mathbb{C})$ in the sense that the Killing form evaluated on the tangent vectors vanishes. We denote by $\mathbf{i}:sl(2,\mathbb{C}) \to \mathbb{C}^3$ an isomorphism between $sl(2,\mathbb{C})$ equipped with the Killing form and $\mathbb{C}^3$ equipped with the standard complex bilinear form. Then the composition of mappings
\begin{align}\label{eq:minimalsur}
	\Re(\mathbf{i}\circ a):\Omega \to sl(2,\mathbb{C}) \to \mathbb{C}^3 \to  \mathbb{R}^3
\end{align}
yields a minimal surface in $\mathbb{R}^3$. One can verify that every minimal surface in $\mathbb{R}^3$ correspond to an osculating M\"{o}bius vector field. In fact, it is related to a well known correspondence between minimal surfaces in $\mathbb{R}^3$ and holomorphic null curves in $\mathbb{C}^3$ (See references in \cite{Small1994}.  

Given a circle pattern $z:V \to \mathbb{C}$ and a vector field $\dot{z}:V \to \mathbb{C}$ denoting an infinitesimal deformation of the vertices, it is associated with an osculating M\"{o}bius vector field $a:F \to sl(2,\mathbb{C})$ analogously: for every face $\{ijk\}$, $a_{ijk}$ is the unique quadratic vector field that coincides with $\dot{z}_i,\dot{z}_j,\dot{z}_k$ at vertices $z_i,z_j,z_k$. Equation \eqref{eq:minimalsur} induces a realization of the dual graph in $\mathbb{R}^3$. The characterization has been carried out in \cite{Lam2015a,Lam2018} when $\dot{z}$ preserves the modulus of cross ratios $|X|$ or the intersection angles $\Arg X$. Such realizations are regarded as discrete minimal surfaces.

We conclude with a comparison between discrete minimal surfaces in $\mathbb{R}^3$ and CMC-1 surfaces in $\mathbb{H}^3$ in Table \ref{tab:1} and \ref{tab:2}.

\begin{table}[h]
	\caption{Modulus of cross ratios $|X|$ is preserved} 	\label{tab:1}
	\begin{tabular}{ c| c| c } 
		& Minimal surfaces in $\mathbb{R}^3$ & CMC-1 surface in $\mathbb{H}^3$  \\
		Edges & Geodesics & Circular arcs \\
		Faces & Piecewise linear& Piecewise horospherical\\ 
		Holomorphic data & $X,\dot{X}$ s.t. $\Re(\log \dot{X})\equiv0$  & $X,\tilde{X}$ s.t. $\Re \log (\tilde{X}/X) \equiv 0$
	\end{tabular}
\end{table}

\begin{table}[h]
	\caption{Intersection angles $\Arg X$ is preserved} 	\label{tab:2}
	\begin{tabular}{ c| c| c } 
		& Minimal surfaces in $\mathbb{R}^3$ & CMC-1 surface in $\mathbb{H}^3$  \\
		Edges & Geodesics & Circular arcs \\
		Vertex stars & Planar & Equidistant \\ 
		Holomorphic data & $X,\dot{X}$ with $\Im(\dot{X})\equiv0$  & $X,\tilde{X}$ s.t. $\Im \log (\tilde{X}/X) \equiv 0$
	\end{tabular}
\end{table}

\section*{Acknowledgment}

The author would like to thank Jean-Marc Schlenker for fruitful discussions.
\bibliographystyle{crelle}
\bibliography{holomorphicquad}

\end{document}